\documentclass[12pt,reqno]{amsart}
\title{On the Parameter Spaces of Harmonic Trinomial Equations}

\usepackage[letterpaper,top=3cm,bottom=3cm,left=3cm,right=3cm,marginparwidth=1.75cm]{geometry}
\usepackage[hidelinks]{hyperref}

\usepackage{amsthm}
\usepackage{amssymb}
\usepackage{amsmath}
\usepackage{graphicx}
\usepackage{natbib}
\usepackage{subcaption}
\newtheorem{theorem}{Theorem}[section]
\newtheorem{lemma}[theorem]{Lemma}
\newtheorem{proposition}[theorem]{Proposition}
\newtheorem{definition}[theorem]{Definition}
\newtheorem{example}[theorem]{\textit{Example}}
\newtheorem{corollary}[theorem]{corollary}

\newtheorem{remark}[theorem]{Remark}

\title{On the Parameter Spaces of Harmonic Trinomial Equations}

\author{Waldemar Barrera}
\address{Facultad de Matemáticas, UADY.}
\email{bvargas@correo.uady.mx}

\author{Lucía Campa}
\address{Facultad de Matemáticas, UADY.}
\email{lucia.campa@alumnos.uady.mx}

\author{Juan Pablo Navarrete}
\address{Facultad de Matemáticas, UADY.}
\email{jpnavarrete@correo.uady.mx}

\date{\today}

\begin{document}

\begin{abstract}
We analyze the parameter space of harmonic trinomial equations of the form
$z^{n+m}+b\overline{z}^m+c$, where $n,m\in\mathbb{Z}^+$ are coprime and $b,c\in\mathbb{C}$.
Using versions of the Bohl and Egerváry theorems for harmonic trinomials, we describe the geometric curves in the parameter space that arise when considering a simple root or a multiple root, or when two distinct roots have the same modulus.
In particular, we study the geometric properties of these curves, called trochoids.
\end{abstract}

\maketitle

\section*{Introduction}\label{sec1}

The study of trinomial equations is a classical topic in mathematics that has given rise to numerous and relevant investigations over the years \cite{bohl1908theorie}, \cite{egervary1930trinom}, \cite{euler2}, \cite{euler}. These equations continue to be an active area of research. By example, in 2019, Belkić \cite{Belkic} compiled more than 250 years of theory on trinomial equations and their applications in the modeling of complex phenomena.

Recently, Thorsten and De Wolff studied in \cite{theobald2016norms} the parameter space of trinomial equations, as well as several geometric and topological properties that had not been previously revealed. In that study, the space of trinomial equations of the form
\begin{equation}
\label{trinomial}
    z^{n+m}+bz^m+c=0
\end{equation}
is denoted by $T_A$, where $b$ and $c$ are complex numbers that are not simultaneously zero. Based on this framework, the authors posed and answered the following questions:

\begin{enumerate}
    \item [(I)] What is, for given $c$, the geometric structure of the set of all $b$ such that \eqref{trinomial} has a root with norm $v>0$?
    \item [(II)] What is, for given $c$, the geometric structure of the set of all $b$ such that \eqref{trinomial} has two roots of norm $v>0$?
     \item[(III)]
    Denote by $U_j^A$ the subset of trinomials in $T_A$ whose $j$-th and $(j+1)$-th smallest root (ordered by their norm) have distinct norm, $1\leq j\leq n+m-1$. Formally, they also consider $U_0^A$ and $U_{n+m}^A$, by declaring $f\in U_0^A$ and $f\in U_{n+m}^A$ for every trinomial $f\in T_A$.
   So which geometric and topological properties do the sets $U_j^A\subseteq T_A$ and their complements have? 
\end{enumerate}

Following this line of research, we may formulate the previous questions (I, II, and III) in the context of harmonic trinomials of the form
$az^{n+m}+b\overline{z}\,^m+c$, with $n,m\in\mathbb{Z}^+$, $a,b,c\in\mathbb{C}$, where an anti-analytic term is added, introducing nontrivial geometric and topological behavior. This approach aligns with broader developments in harmonic function theory, where several applications have been found. For example, in \cite{Khavinson2008}, \cite{khavinson}, Khavinson and Neumann showed a subclass of harmonic polynomials related to multiple images produced by gravitational lenses, which can generate patterns similar to the geometric structures described by harmonic trinomials. Similarly, in \cite{AnJin}, An studies the relationship between weak perturbations affecting the potential of a gravitational lens and the shape of the resulting caustics. For these reasons, it is particularly natural and relevant to try to extend several results previously established for analytic trinomials to a more general setting by introducing an anti-analytic term.

Motivated by the results of Thorsten and De Wolff in Section~4 of \cite{theobald2016norms}, which are based on Amoeba theory and the theorems of Bohl~\cite{bohl1908theorie} and Egerváry~\cite{egervary1930trinom} for analytic trinomial equations, and also motivated by the results of Brilleslyper \emph{et al.}~\cite{brilleslyper}, which rely primarily on Rouché’s theorem and the Argument Principle for Harmonic Functions, we ask what the analogous results are for harmonic trinomial equations. 

Let $H$ be the space of all harmonic trinomials such that $n$ and $m$ are relatively prime. If we fix the parameter $c$ for all harmonic trinomials of the form
\begin{equation}
\label{ecs_trin_arm}  h(z)=z^{n+m}+b\overline{z}\,^m+c,
\end{equation}

\begin{itemize}
    \item[(A)] What is the geometric structure of the set of all $b$ such that \eqref{ecs_trin_arm} has a root with norm $v>0$? (Theorem~\ref{teo:param_trocoide}).
    
    \item[(B)] What is the corresponding locus of the set of all $b$ such that \eqref{ecs_trin_arm} have two roots with the same norm $v>0$? (Theorem~\ref{RaizDoble}).
    \item[(C)] If we fix the parameter $b$, what is the curve consisting of the set of all $c$ such that \eqref{ecs_trin_arm} has a root of norm $v>0$? (Theorem~\ref{teo:param_trocoide2}).

\item[(D)] Let $U_j$ denote the subset of all harmonic trinomials in $H$ whose $j$-th and $(j+1)$-th smallest roots (ordered by their norm) have distinct norms, for $1\leq j\leq l$, where $l$ denotes the index of the last root of \eqref{ecs_trin_arm}  with the largest norm in this ordering and satisfies $l\leq n+3m$. What geometric and topological properties do the sets $U_j$ and their complements in $H$ have? (Theorems \ref{teo:union_rayos}, \ref{teo_complementos}, and Corollary \ref{radio}).
\end{itemize}

We notice that question (D) is different to the previous question (III) due to the Fundamental Theorem of Algebra. For trinomial equations of the form \eqref{trinomial}, we introduce the sets $U_j^A$ as in (III), and these equations have exactly $n+m$ roots; therefore, there are at most $n+m+1$ such sets $U_j^A$. For harmonic trinomials, however, this property does not hold. According to \cite{barrera2022number}, any harmonic trinomial of the form \eqref{ecs_trin_arm} has at most $n+3m$ roots. Consequently, there are exactly $n+3m+1$ sets $U_j$.

This work is divided into two sections. In Section~\ref{Preliminares}, we introduce useful tools for the development of this article. Additionally, we include a summary of Section~4 of \cite{theobald2016norms} for trinomial equations, with a subsection on trochoid curves, which are useful for describing the geometry associated with harmonic trinomial equations. Section~\ref{Results} addresses the questions (A, B, C and D) posed by analyzing the local structure of the parameter space obtained by fixing one of the coefficients of the harmonic trinomials.

Finally, we emphasize that this work does not make use of Amoeba theory as in \cite{theobald2016norms}. Instead, the mathematical tools employed to develop the results of this article rely on the Bohl and Egerváry theorems in their versions for harmonic trinomials, which were developed recently in \cite{barrera2022number} and \cite{barrera2024egervary}.

\section{Preliminaries}
\label{Preliminares}
\subsection{Bohl Theorems for Harmonic Trinomials}
For the reader's convenience we include the definition for harmonic trinomial equations and some of the main results of Bohl's Theorem in its versions for harmonic trinomials presented in \cite{barrera2022number}, \cite{BARRERA2024128213}, which are used in some of the proofs in Section \ref{Results}.

\begin{definition}
\label{fnc}
A \emph{harmonic trinomial equation} is an expression of the form
\[
a z^{n+m} + b\,\overline{z}\,^m + c = 0,
\]
where $a,b,c \in \mathbb{C}^*$ and $n,m$ are positive integers.
\end{definition}

\begin{theorem}[Bohl for harmonic trinomials, \cite{barrera2022number}]
\label{Bohlarm1}
Let $h(z)=az^{n+m}+b\overline{z}\,^m+c$ for all $z\in\mathbb{C}$ be a harmonic trinomial with $a, b, c\in\mathbb{C}$. Let $v$ be a positive real number and $k$ the number of roots whose modulus is less than $v$. Then, the following statements hold:
\begin{itemize}
    \item If $|c|>|a|v^{n+m}+|b|v^m$, then $k=0$.
    \item If $|a|v^{n+m}>|b|v^m+|c|$, then $k=n+3m$.
    \item If $|b|v^m>|a|v^{n+m}+|c|$, then $k=m$.
\end{itemize}
\end{theorem}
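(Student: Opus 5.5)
The plan is to combine the argument principle for harmonic functions with Rouché's theorem for harmonic functions on the circle $|z|=v$. The sign structure of the Jacobian then converts the resulting signed count into an actual count. Write $h(z)=az^{n+m}+b\overline{z}\,^m+c$. Its Jacobian is
\[
J_h(z)=|h_z|^2-|h_{\overline z}|^2=(n+m)^2|a|^2|z|^{2(n+m-1)}-m^2|b|^2|z|^{2(m-1)},
\]
which vanishes only on the critical circle $|z|=r_0$, where $r_0^{\,n}=m|b|/((n+m)|a|)$. Zeros with $|z|<r_0$ are sense-reversing and zeros with $|z|>r_0$ are sense-preserving. The harmonic argument principle then gives, for a circle free of zeros and of the critical set, $N_+-N_-=\frac{1}{2\pi}\Delta_{|z|=v}\arg h$.

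\textbf{Case 1} is immediate. For $|z|\le v$,
\[
|h(z)|\ge |c|-|a||z|^{n+m}-|b||z|^m\ge|c|-|a|v^{n+m}-|b|v^m>0,
\]
so $h$ has no zeros there and $k=0$.

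\textbf{Cases 2 and 3.} Harmonic Rouché shows that on $|z|=v$ the winding of $h$ equals that of its dominant term. This is $n+m$ when $az^{n+m}$ dominates and $-m$ when $b\overline z\,^m$ dominates. The signed count alone is not enough, so I also need to control how many zeros of each orientation there are.

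For this I would use the polar reduction. Put $z=re^{i\theta}$ and multiply the equation by $e^{im\theta}$. Then $h(z)=0$ becomes $u=e^{i\theta}$ being a unimodular root of the analytic trinomial
\[
p_r(u)=a r^{n+m}u^{n+2m}+c\,u^m+b r^m .
\]
So zeros of $h$ of modulus $r$ correspond to roots of $p_r$ on the unit circle.

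In Case 3, the hypothesis at $r=v$ says the constant term of $p_v$ dominates on $|u|=1$. By the classical Bohl theorem, $p_v$ then has no roots in $|u|\le 1$. In Case 2 the leading term dominates, so all $n+2m$ roots of $p_v$ lie in $|u|<1$. Following these roots as $r$ decreases from $v$ to $0$ counts, with direction, how many times roots of $p_r$ cross the unit circle. Each crossing at radius $r<v$ produces a zero of $h$ of that modulus. Combining this crossing count with the winding numbers $n+m$ and $-m$, with the orientation rule from the Jacobian, and with the upper bound $n+3m$ on the number of zeros from \cite{barrera2022number}, should give $k=n+3m$ in Case 2 and $k=m$ in Case 3. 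For Case 3 the target decomposition is $N_-=m$, $N_+=0$. For Case 2 it is $N_-=m$, $N_+=n+2m$.

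The main obstacle is passing from the signed index $N_+-N_-$ to the actual number $k$. This requires two things. First, the zeros must be simple, i.e. none lies on the critical circle or is a singular zero. Second, the number of sense-reversing zeros inside $|z|<r_0$ must be exactly $m$ in both Cases 2 and 3.

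For the second point I would first try a Rouché argument on a circle of radius slightly below $r_0$, where the $\overline z\,^m$ term dominates. If that fails, I would fall back on a perturbation argument. The count is locally constant under the strict hypotheses, so one may move to a generic configuration in which all zeros are simple. There the polar reduction and the maximal bound force the stated number of zeros.
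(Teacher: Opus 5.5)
The paper gives no proof of this statement; it only quotes it from \cite{barrera2022number}. So I judge your plan on its own. Case~1 is fine. Case~2, however, cannot be completed. The decomposition $N_-=m$, $N_+=n+2m$ you aim for is false in general, and so is the second bullet as stated.

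Take the example $h=z^3+\overline{z}+\sqrt{2}$ given right after Theorem~\ref{BohlArmonico}, with $n=2$, $m=1$, $n+3m=5$. Here $r_0=3^{-1/2}$, and $|z|^3+|z|\le 0.77<\sqrt{2}$ for $|z|\le r_0$. So $h$ has no zeros in $|z|\le r_0$, i.e.\ $N_-=0$. At $v=2$ the Case~2 inequality $8>2+\sqrt{2}$ holds, and Rouch\'e gives winding $3$. Hence $k=N_+=3\neq 5$, which matches the three roots listed in that example. Because $A(r)/r^m=|a|r^n-|b|-|c|r^{-m}$ is increasing, the Case~2 inequality at $v$ places every zero in $|z|<v$. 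The bullet therefore claims that every such $h$ has $n+3m$ zeros. Even Theorem~\ref{BohlArmonico}(iii) only asserts a range on $(\mathfrak{a},\infty)$.

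Your Rouch\'e test near $r_0$ succeeds exactly when $|b|r_0^m>|a|r_0^{n+m}+|c|$, which fails here. The perturbation fallback also fails, because the number of zeros is not constant on the set where the Case~2 inequality holds. For instance, $z^3+t\overline{z}+\sqrt{2}$ has $3$ zeros for $t=1$ and $5$ zeros for $t>3\cdot 2^{-1/3}$. Meanwhile $|z|=10$ satisfies the Case~2 inequality for all $t<99$, so the extra pair must be created at the critical circle. What your method does prove is $k=n+3m$ under the additional hypothesis that the third-bullet inequality holds at some radius. In that case Rouch\'e on $|z|=r_0$ gives $N_-=m$, and the winding $n+m$ on $|z|=v$ gives $N_+=n+2m$.

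For Case~3 your outline can be completed, but the key fact is missing. Set $B(r)=|b|r^m-|a|r^{n+m}-|c|$. Then $B'(r)=r^{m-1}\left(m|b|-(n+m)|a|r^n\right)$, so $B$ is maximal exactly at the critical radius $r_0$ of $J_h$. Thus $\{B>0\}$ is an interval $(\mathfrak{b}_1,\mathfrak{b}_2)$ containing both $v$ and $r_0$, and the annulus $\mathfrak{b}_1<|z|<\mathfrak{b}_2$ is zero-free. Every zero with $|z|<v$ therefore satisfies $0<|z|\le\mathfrak{b}_1<r_0$ (recall $h(0)=c\neq 0$), so $J_h<0$ there. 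Each such zero is simple of index $-1$, and the winding $-m$ of $b\overline{z}\,^m$ on $|z|=v$ gives $k=m$. Neither the classical Bohl theorem nor the bound $n+3m$ is needed.

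Note also that the polar reduction cannot give you the extra information you want from it, in either case. Since $p_r(e^{i\theta})=e^{im\theta}h(re^{i\theta})$, the number of roots of $p_r$ in $|u|<1$ equals $m+\mathrm{wind}(h,|z|=r)$. Tracking unit-circle crossings therefore only recovers the signed count $N_+-N_-$ that Rouch\'e already gives you.
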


\begin{example}
    Let $h(z)=z^{4}-5\overline{z}\,^3+2$. For $v =\frac{1}{2},2,6$, we have $k=0,3,10$ roots inside the circle $C_1,C_2,C_3$ respectively, as shown in Figures~\ref{fig:4}, \ref{fig:5} and \ref{fig:6}.

\begin{figure}[h!] 
    \centering 
    \begin{subfigure}[b]{0.3\textwidth} 
        \centering
    \includegraphics[width=\textwidth]{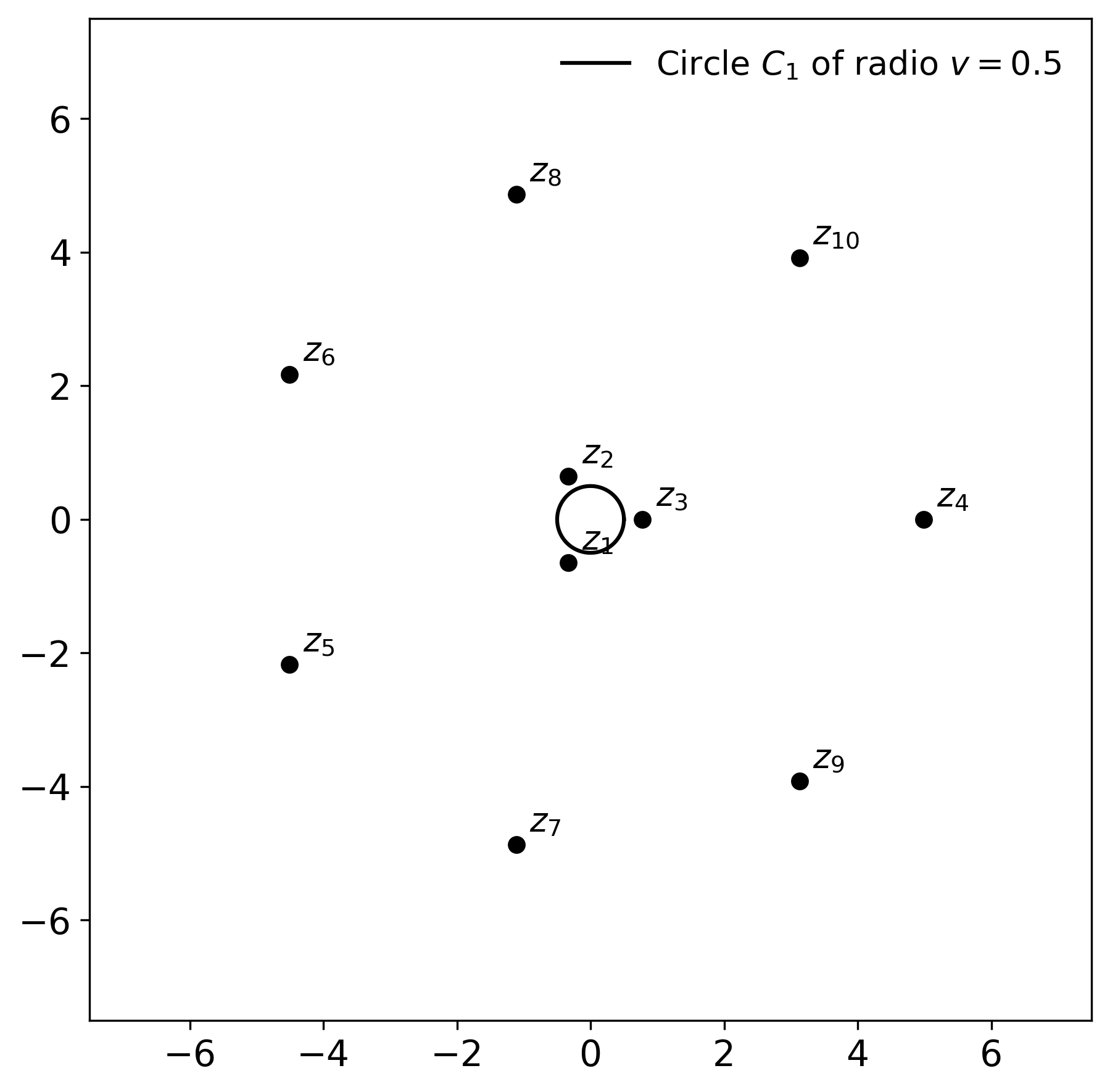} 
        \caption{\small{Inside the circle \( C_1 \), there are \( k = 0 \) roots.}}
        \label{fig:4}
    \end{subfigure}
    \hfill 
    \begin{subfigure}[b]{0.3\textwidth}
        \centering
    \includegraphics[width=\textwidth]{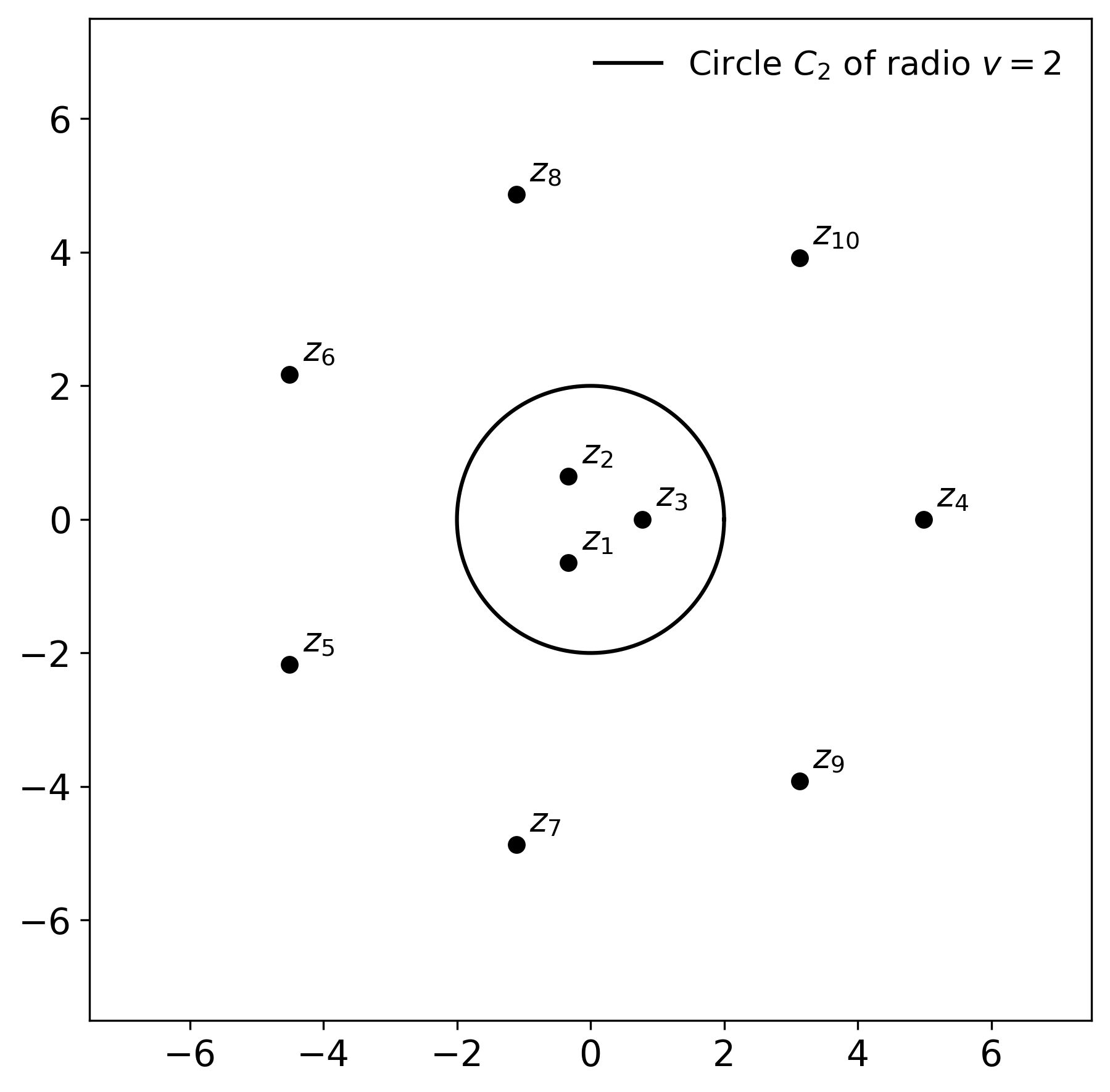}
        \caption{\small{Inside the circle \( C_2 \), there are \( k = 3 \) roots.}}
        \label{fig:5}
    \end{subfigure}
    \hfill
    \begin{subfigure}[b]{0.3\textwidth}
        \centering
    \includegraphics[width=\textwidth]{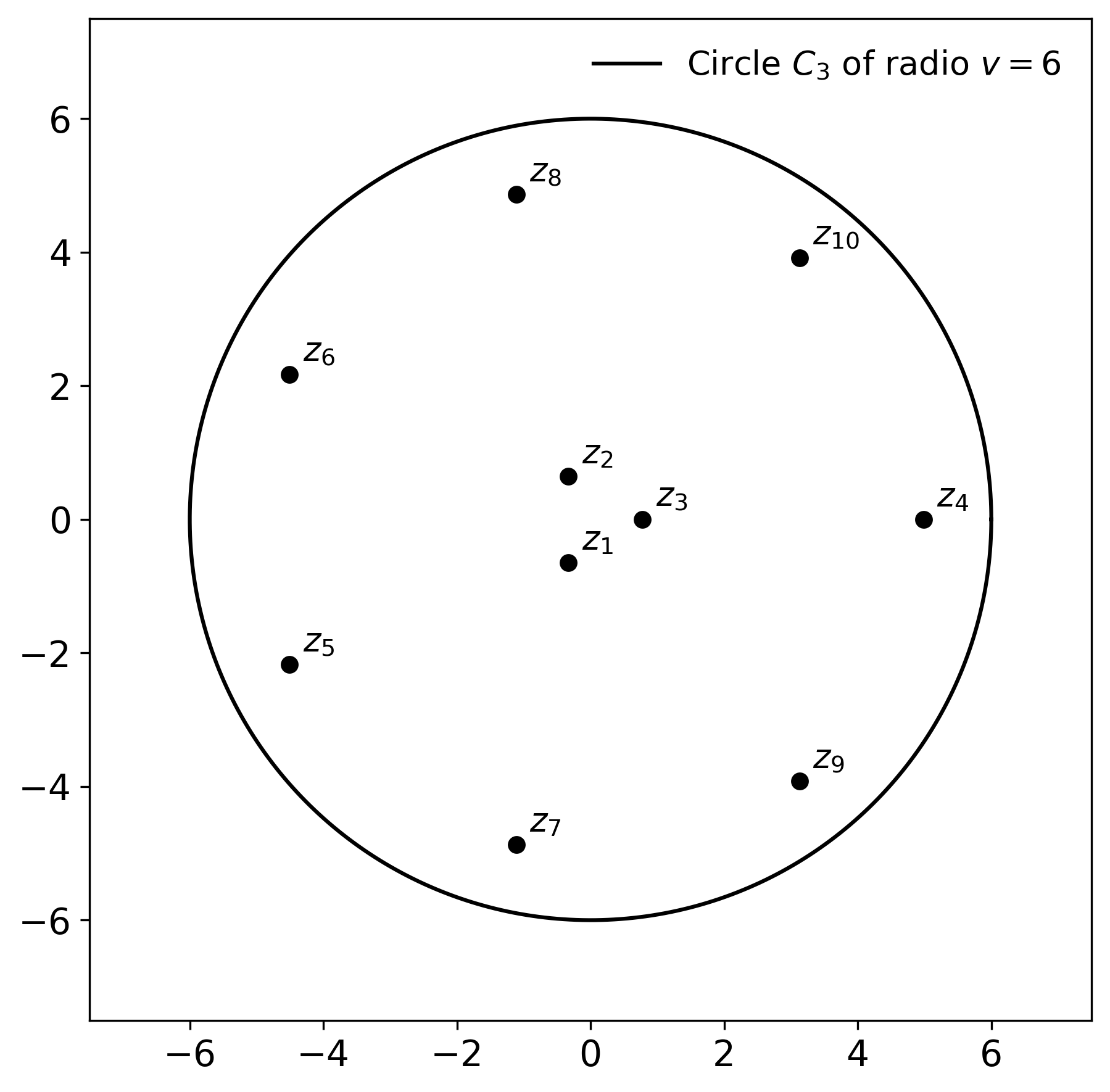}
        \caption{\small{Inside the circle \( C_3 \), there are \( k = 10 \) roots.}}
        \label{fig:6}
    \end{subfigure}
    \caption{\small{Roots $z_1, z_2,\ldots,z_{10}$ of the harmonic trinomial $h(z)=z^4-5\overline{z}\,^3+2$. And the circles \( C_1 \), \( C_2 \) and \( C_3 \) with radio \( v = \tfrac{1}{2}, 2, \text{ and } 6 \) respectively.
}} 
\end{figure}
\end{example}

Theorem \ref{Bohlarm1} analyzes the cases when, for any $v\in\mathbb{R}^+$, the lengths $|a|v^{n+m}$, $|b|v^m$, and $|c|$ associated with the harmonic trinomial $h(z)=az^{n+m}+b\overline{z}\,^m+c$ do not form the sides of a triangle. However, there are cases when these lengths do form the sides of a triangle.
The following elements, defined in \cite{barrera2022number}, will be used in Theorem \ref{BohlArmonico}. Let
\begin{equation}
\label{polinomios}
A(v)=|a|v^{n+m}-|b|v^m-|c|,\,\,\,\,B(v)=-|a|v^{n+m}+|b|v^m-|c|\,\,\,\text{ and }\,\,C(v)=-|a|v^{n+m}-|b|v^m+|c|.
\end{equation}
Let $\mathfrak{a}$ and $\mathfrak{c}$ be the unique positive roots of $A$ and $C$, respectively. Moreover, $\mathfrak{a}$ and $\mathfrak{c}$, where $\mathfrak{c}<v<\mathfrak{a}$, are the endpoints that delimit the region $\mathbb{T}$ where $|a|v^{n+m}$, $|b|v^m$, and $|c|$ form the sides of a triangle. Then, three possible cases arise for $\mathbb{T}$:
\begin{itemize}
\item $\mathbb{T}=(\mathfrak{c},\mathfrak{a})$.
\item $\mathbb{T}=(\mathfrak{c},\mathfrak{b})\cup(\mathfrak{b},\mathfrak{a})$, where $\mathfrak{b}$ is a double root of $h$.
\item $\mathbb{T}=(\mathfrak{c},\mathfrak{b_1})\cup(\mathfrak{b_2},\mathfrak{a})$, where $\mathfrak{b_1}\neq\mathfrak{b_2}$ are roots of $h$.
\end{itemize}
In each of these cases, on the boundary of these intervals, a degenerate triangle is formed. For more details, see \cite{barrera2022number}, \cite{bohl1908theorie}.

Given $v\in\mathbb{R}^+$ and a harmonic trinomial function $h:\mathbb{C}\rightarrow\mathbb{C}$, we define the set $Z_h(v):=\{\zeta\in\mathbb{C}\,|\,h(\zeta)=0 \text{ \,\,and\,\, }|\zeta|<v\}$. We denote the cardinality of a set $X$ by $\text{Card}(X)$, and instead of $\text{Card}(X\cap\mathbb{Z})$, we will write $\text{Card}_{\mathbb{Z}}(X)$ for brevity.

\begin{theorem}[Bohl for harmonic trinomials, \cite{barrera2022number}, \cite{BARRERA2024128213}]
\label{BohlArmonico}
Let $h(z)=az^{n+m}+b\overline{z}\,^m+c$ for all $z\in\mathbb{C}$ be a harmonic trinomial with $a,b,c\in\mathbb{C}$. Let $v>0$, and assume that $|a|v^{n+m}$, $|b|v^m$, and $|c|$ are the sides of some (non-degenerate) triangle. Let $w_1$ and $w_2$ be the angles opposite to the sides of lengths $|a|v^{n+m}$ and $|b|v^m$, respectively. For $\mathfrak{c}<v<\mathfrak{a}$, with $v\neq \mathfrak{b}_j$ where $j\in\{1,2\}$ (including the case $\mathfrak{b}_1=\mathfrak{b}_2$), it follows that
\begin{equation*}
    \text{Card}(Z_h(v))=\text{Card}_{\mathbb{Z}}(P_*-w_*(0,v))+\text{Card}_{\mathbb{Z}}(P_*+w_*(0,v))+\textbf{1}_{\{P_*\in\mathbb{Z}\}},
\end{equation*}
where the pivots $P_*$ and $w_*(v)$ are given by
\begin{equation*}
   P_*:=\frac{(n+m)(\beta-\gamma-\pi)+m(\alpha-\gamma-\pi)}{2\pi}\quad\text{ and }\quad w_*(v):=\frac{(n+m)w_1-mw_2}{2\pi},
\end{equation*}
with
\begin{equation*}
    \begin{array}{ccc}
        P_*-w_*(0,v) & := & \{P_*-w_*(u):u\in(0,v)\}, \\
        P_*+w_*(0,v) & := & \{P_*+w_*(u):u\in(0,v)\},
    \end{array}
\end{equation*}
where $\alpha,\, \beta$, and $\gamma$ are the arguments of $a,\,b$, and $c$, respectively, and $\textbf{1}_{\{P_*\in\mathbb{Z}\}}:=1$ if $P_*$ is an integer, and $\textbf{1}_{\{P_*\in\mathbb{Z}\}}:=0$ otherwise.\\

When $|a|v^{n+m}$, $|b|v^m$, and $|c|$ form the sides of a degenerate triangle, we define 
\begin{equation*}
    w_*(v)=
    \begin{cases}
      0 & \text{if }\ |c|=|a|v^{n+m}+|b|v^m, \\
      \frac{n+m}{2} & \text{if }\ |a|v^{n+m}=|b|v^m+|c|,\\
      -\frac{m}{2} & \text{if }\ |b|v^m=|a|v^{n+m}+|c|,
    \end{cases}
\end{equation*}
and since $w_1$ and $w_2$ are continuous functions of $v$, we can continuously extend $w_*(v)$ as follows:
\begin{equation*}
    w_*(v)=
    \begin{cases}
      0 & \text{if }\ |c|\geq|a|v^{n+m}+|b|v^m, \\
      \frac{n+m}{2} & \text{if }\ |a|v^{n+m}\geq|b|v^m+|c|,\\
      -\frac{m}{2} & \text{if }\ |b|v^m\geq|a|v^{n+m}+|c|.
    \end{cases}
\end{equation*}
Finally, when $|a|v^{n+m}$, $|b|v^m$, and $|c|$ do not form the sides of any triangle, then 

\begin{itemize}
    \item[i)] $\text{Card}(Z_h(v))=0$ for $0<v<\mathfrak{c}$.
    \item[ii)] Including the case $\mathfrak{b}_1=\mathfrak{b}_2$, $\text{Card}(Z_h(v))\leq m$ for $0<v<\mathfrak{b}_1$ and $\text{Card}(Z_h(v))\geq m$ for $v>\mathfrak{b}_2$.
    \item[iii)] $n\leq \text{Card}(Z_h(v))\leq n+3m$ for $v\in(\mathfrak{a},\infty)$. 
\end{itemize}
And therefore,
\begin{equation*}
    \text{Card}(Z_h(v))=
    \begin{cases}
      0 & \text{if }\ |c|>|a|v^{n+m}+|b|v^m, \\
      n+3m & \text{if }\ |a|v^{n+m}>|b|v^m+|c|,\\
      m & \text{if }\ |b|v^m>|a|v^{n+m}+|c|.
    \end{cases}
\end{equation*}
\end{theorem}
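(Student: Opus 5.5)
The plan is to reduce the harmonic trinomial to a counting problem on circles. The strategy is to combine an argument-principle computation on the circle $|z|=u$ with the law of cosines for the triangle with sides $|a|u^{n+m}$, $|b|u^m$, $|c|$.

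First I factor out the coefficients. Write $a=|a|e^{i\alpha}$, $b=|b|e^{i\beta}$, $c=|c|e^{i\gamma}$ and $z=ue^{i\theta}$. Then
\[h(z)=0 \iff |a|u^{n+m}e^{i(\alpha+(n+m)\theta)}+|b|u^m e^{i(\beta-m\theta)}+|c|e^{i\gamma}=0 .\]
Three vectors of lengths $|a|u^{n+m}$, $|b|u^m$, $|c|$ sum to zero exactly when they close up a triangle, possibly degenerate. Up to orientation, the relative angles are then forced to be $\pi$ minus the interior angles. So on a circle $|z|=u$ with $\mathfrak{c}<u<\mathfrak{a}$, a zero exists iff
\[\alpha+(n+m)\theta-\gamma\equiv \pm(\pi-w_1)\quad\text{and}\quad \beta-m\theta-\gamma\equiv \mp(\pi-w_2)\pmod{2\pi}\]
for one of the two orientations. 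I eliminate $\theta$ by multiplying the first congruence by $m$ and the second by $n+m$ and adding. This gives the condition $P_*\pm w_*(u)\in\mathbb{Z}$, with $P_*$ and $w_*$ exactly as in the statement.

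Conversely, each integer hit yields a unique $\theta$ modulo $2\pi$, using $\gcd(n+m,m)=1$ and the fact that the two congruences determine $\theta$ modulo $2\pi/(n+m)$ and $2\pi/m$ compatibly. So roots on $|z|=u$ correspond bijectively to integers of the form $P_*\pm w_*(u)$, where the two signs coincide only in the degenerate case.

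Next I sweep $u$ from $0$ to $v$. For $u\le\mathfrak{c}$ the $c$ term dominates, so there are no roots and $w_*=0$ by the extension. Hence $Z_h(v)$ is the disjoint union over $u\in(0,v)$ of the roots on $|z|=u$. Each root corresponds to an integer $k$ with $k=P_*-w_*(u)$ or $k=P_*+w_*(u)$. By continuity and the monotonicity of $w_*$ between the degenerate points, each such integer is hit by a unique $u$ on each branch. This gives the two $\mathrm{Card}_{\mathbb{Z}}$ terms.

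The $w_*(u)=0$ boundary needs care. At $u=\mathfrak{c}$ both branches start at $P_*$, but the triangle there is degenerate and yields only one root. However $\mathfrak{c}$ is excluded from $(0,v)$ only if we count branches starting at $\mathfrak{c}$; so I will check that an integer $P_*$ gives exactly one root at $|z|=\mathfrak{c}$, which accounts for the indicator $\textbf{1}_{\{P_*\in\mathbb{Z}\}}$. The same check must be made at the points $\mathfrak{b}_j$, where $w_*=-m/2$ and the branches may be counted twice or not at all. This is the reason these $v$ are excluded in the statement.

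The non-triangle statements i)–iii) and the final three-case formula then follow from Theorem~\ref{Bohlarm1} together with the limiting values of $w_*$. For example, for $v>\mathfrak{a}$ the set $P_*\pm w_*(0,v)$ sweeps an interval of length $n+m+m$ from $-m/2$ to $(n+m)/2$ in each direction. Combined with the dominance counts, this gives the bounds between $n$ and $n+3m$.

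The main obstacle is the bijection step. I must show that each integer value corresponds to exactly one root and that distinct $(u,\text{branch})$ pairs never produce the same root. I also need to control the possibly non-monotone behaviour of $w_*(u)$ near $\mathfrak{b}_j$. There I would use the sense-preserving versus sense-reversing regions of $h$ ($|z|^{n}$ compared with $m|b|/((n+m)|a|)$) to argue that zeros are simple away from $\mathfrak{b}_j$, so the counting is transversal.
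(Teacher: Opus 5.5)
The paper quotes this theorem from the cited works without giving a proof, so I am judging your argument on its own. The circle-by-circle reduction is the right framework: zeros on $|z|=u$ correspond to closed triangles, $\theta$ is eliminated, and $\gcd(n+m,m)=1$ makes $\theta$ unique. The counting step, however, fails. You claim that $w_*$ is monotone between degenerate points, so that each integer is hit by a unique $u$ on each branch; this is false.

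This is exactly where the harmonic case departs from Bohl's analytic one, whose relevant quantity is the sum $(n+m)w_1+mw_2$. Set $A=|a|u^{n+m}$, $B=|b|u^m$, $C=|c|$, and differentiate the closing relation $Ae^{i(\pi-w_2)}+Be^{i(w_1-\pi)}+C=0$ with respect to $t=\log u$. This gives
\[
2\pi\,\frac{dw_*}{dt}=\frac{(n+m)^2A^2-m^2B^2}{AB\sin w_3},
\]
where $w_3$ is the angle opposite $|c|$. So on $\mathbb{T}$, $w_*$ decreases inside the critical circle $u<r_0:=\big(m|b|/((n+m)|a|)\big)^{1/n}$ (where $J_h<0$) and increases outside it.

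One always has $r_0<\mathfrak{a}$. If the $\mathfrak{b}_j$ exist, then $\mathfrak{b}_1\le r_0\le\mathfrak{b}_2$. If they do not exist but $r_0>\mathfrak{c}$, then $w_*$ turns around inside the single interval $(\mathfrak{c},\mathfrak{a})$, which contains no degenerate point at all. In both situations, every integer strictly between $P_*$ and the turning value is hit twice by one branch: once by a sense-reversing zero with $u<r_0$, once by a sense-preserving zero with $u>r_0$. Hence, apart from endpoint corrections, the number of zeros in $|z|<v$ is the number of pairs $(u,\pm)$ with $u\in(\mathfrak{c},v)$ and $P_*\pm w_*(u)\in\mathbb{Z}$. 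Counting $\text{Card}_{\mathbb{Z}}$ of the image sets undercounts.

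A concrete example: take $h(z)=z^2+2e^{i\beta}\overline{z}+2$, so $n=m=1$. Then $\mathfrak{c}=\sqrt3-1$, $\mathfrak{a}=\sqrt3+1$, there are no $\mathfrak{b}_j$, $r_0=1$, and $2\pi w_*(1)=2\arccos\frac78-\arccos\frac14\approx-0.31$. Taking $\beta=1.52\pi$ gives $P_*=0.02<|w_*(1)|$. For $v$ slightly beyond the second solution of $w_*(u)=-P_*$, the disk $|z|<v$ contains two zeros, while the displayed formula returns $1$.

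So the identity as written can only be obtained where $w_*$ is monotone on $(\mathfrak{c},v)$, for instance when $v\le r_0$ or $r_0\le\mathfrak{c}$. Elsewhere you must count crossings, not integers in the image. You do mention the critical circle at the end, but its role is not transversality near $\mathfrak{b}_j$; near $\mathfrak{b}_1$ and $\mathfrak{b}_2$, $w_*$ is in fact monotone. The critical circle is where $w_*$ turns around.

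\medskip

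There are three smaller points.

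\emph{Swapped angles.} The interior angle at the vertex where the $a$-term and the $c$-term meet is the one opposite the side $|b|u^m$. The correct relations are therefore $\alpha+(n+m)\theta-\gamma\equiv\pm(\pi-w_2)$ and $\beta-m\theta-\gamma\equiv\mp(\pi-w_1)$. As you wrote them, eliminating $\theta$ yields $(n+m)w_2-mw_1$ instead of $2\pi w_*$.

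\emph{The endpoint $\mathfrak{c}$.} Since $w_*\equiv0$ on $(0,\mathfrak{c}]$, the point $P_*$ belongs to both sets $P_*\pm w_*(0,v)$. For the indicator to account for the single zero on $|z|=\mathfrak{c}$, you must count only $u\in(\mathfrak{c},v)$, as the paper's example implicitly does with open intervals.

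\emph{The case $v>\mathfrak{a}$.} Your bookkeeping does not add up. Each branch sweeps a set of length $(n+2m)/2$, so counting integers in sets gives about $n+2m$. The value $n+3m$ arises only because the excursion of $w_*$ down to $-m/2$ is traversed twice, and this requires the $\mathfrak{b}_j$ to exist. For instance, $z^2+\epsilon\overline{z}+1$ with small $\epsilon>0$ has exactly two zeros (both in the sense-preserving region, total index $2$), although $v^2>\epsilon v+1$ for large $v$. So the $n+3m$ case, both here and in Theorem~\ref{Bohlarm1}, needs the extra hypothesis that $|b|u^m>|a|u^{n+m}+|c|$ for some $u$, and cannot simply be quoted.
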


\begin{example}
    For $h=z^3+\overline{z}+\sqrt{2}$ and $v=1$, the lengths $1, 1, \sqrt{2}$ form the sides of a non-degenerate triangle. The only integer between $\frac{7}{4}$ and $\frac{9}{4}$ is two, therefore $k=1$.
    Indeed, the roots of $h$ have moduli $|z_1|\approx 0.83404$, $|z_2|=|z_3|\approx 1.35620$. Since $P_*=-2$ and $w_*(v)=1/4$, then $\text{Card}_{\mathbb{Z}}(P_*-w_*(0,v))=\text{Card}_{\mathbb{Z}}(-2.25,-2)=0$, $\text{Card}_{\mathbb{Z}}(P_*+w_*(0,v))=\text{Card}_{\mathbb{Z}}(-2,-1.75)=0$, and $\textbf{1}_{\{P_*\in\mathbb{Z}\}}=1$. That is, there are $k=1$ roots of $h$ whose modulus is less than $v=1$.
    
\end{example}

\subsection{Egerváry Theorems for Harmonic Trinomials}
Below we present the definition and main result for two Egerváry-equivalent harmonic trinomials presented in \cite{barrera2024egervary}. This Theorem \ref{teo:Egervaryarm1} proves useful in some of the proofs in Section \ref{Results}.

\begin{definition}
    Consider the harmonic trinomials
    \begin{equation}
    \label{Fncarm1}
h_1(z)=a_1z^{n+m}+b_1\overline{z}\,^m+c_1 \quad \text{for all } z\in\mathbb{C},
    \end{equation}
    \begin{equation}
    \label{Fncarm2}
h_2(z)=a_2z^{n+m}+b_2\overline{z}\,^m+c_2 \quad \text{for all } z\in\mathbb{C},
    \end{equation}
    whose coefficients are nonzero complex numbers. We say that $h_1$ and $h_2$ are \textbf{Egerváry equivalent}, denoted by $h_1\sim h_2$, if one of the following two conditions holds:
    \begin{enumerate}
        \item[a)] $h_1(z)=kh_2(e^{i\delta}z)$ \quad for all $z\in\mathbb{C}$,
        \item[b)] $h_1(z)=k\overline{h}_2(e^{i\delta}z)$ \quad for all $z\in\mathbb{C}$,
    \end{enumerate}
    where $k$ is a nonzero complex number and $\delta$ a real number, and where $\overline{h}_2(z)=\overline{a}_2z^{n+m}+\overline{b}_2\overline{z}\,^{m}+\overline{c}_2$ for all $z\in\mathbb{C}$.
\end{definition}

\begin{theorem}[Egerváry for harmonic trinomials, \cite{barrera2024egervary}]
\label{teo:Egervaryarm1}
    Let $h_1$ and $h_2$ be harmonic trinomials as in \eqref{Fncarm1} and \eqref{Fncarm2}. We denote by $\alpha_j:=\arg(a_j)$, $\beta_j:=\arg(b_j)$, and $\gamma_j:=\arg(c_j)$ for $j=1,2$. The harmonic trinomials $h_1$ and $h_2$ are equivalent if and only if
    $$
    \frac{|a_1|}{|a_2|}=\frac{|b_1|}{|b_2|}=\frac{|c_1|}{|c_2|},\quad\text{ and }
    $$
    $$
        m(\alpha_1\pm\alpha_2)+(n+m)(\beta_1\pm \beta_2)-(n+2m)(\gamma_1\pm\gamma_2)\equiv 0\pmod{2\pi}.
    $$
\end{theorem}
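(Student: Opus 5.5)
The plan is to prove both directions by comparing coefficients. The whole problem then reduces to solving a small system of linear congruences modulo $2\pi$ for the two free parameters $k=|k|e^{i\kappa}$ and $\delta$. Throughout, I write $a_j=|a_j|e^{i\alpha_j}$, $b_j=|b_j|e^{i\beta_j}$ and $c_j=|c_j|e^{i\gamma_j}$.

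\emph{Necessity.} Suppose condition a) holds, i.e. $h_1(z)=kh_2(e^{i\delta}z)$. Since $\overline{e^{i\delta}z}\,^m=e^{-im\delta}\overline{z}\,^m$, comparing the coefficients of $z^{n+m}$, $\overline{z}\,^m$ and $1$ gives
\[
a_1=ka_2e^{i(n+m)\delta},\qquad b_1=kb_2e^{-im\delta},\qquad c_1=kc_2 .
\]
Taking moduli, all three ratios equal $|k|$. Taking arguments modulo $2\pi$,
\[
\alpha_1-\alpha_2\equiv\kappa+(n+m)\delta,\qquad \beta_1-\beta_2\equiv\kappa-m\delta,\qquad \gamma_1-\gamma_2\equiv\kappa .
\]
Form the combination $m(\alpha_1-\alpha_2)+(n+m)(\beta_1-\beta_2)-(n+2m)(\gamma_1-\gamma_2)$. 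The $\kappa$-terms cancel because $m+(n+m)-(n+2m)=0$. The $\delta$-terms cancel because $m(n+m)\delta-(n+m)m\delta=0$. This gives the congruence with the minus sign.

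Condition b) is handled identically. The only change is that $\alpha_2,\beta_2,\gamma_2$ are replaced by $-\alpha_2,-\beta_2,-\gamma_2$, which yields the congruence with the plus sign.

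\emph{Sufficiency.} Assume the modulus ratios are all equal to some $r>0$ and that the congruence holds with the sign $\pm$. Put $\sigma=\pm1$ accordingly. I will look for $k=re^{i\kappa}$ with $\kappa:=\gamma_1-\sigma\gamma_2$, so that $c_1=kc_2$ (or $c_1=k\overline{c}_2$) holds automatically. Set
\[
x:=\alpha_1-\sigma\alpha_2-\kappa,\qquad y:=\beta_1-\sigma\beta_2-\kappa .
\]
By the computation in the necessity part, the hypothesis becomes $mx+(n+m)y\equiv0\pmod{2\pi}$, say $mx+(n+m)y=2\pi N$ with $N\in\mathbb{Z}$. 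We need a real $\delta$ and integers $j,l$ with
\[
(n+m)\delta=x+2\pi j,\qquad m\delta=-y+2\pi l .
\]
Eliminating $\delta$, these are consistent exactly when $mx+(n+m)y=2\pi\bigl((n+m)l-mj\bigr)$, that is, when $N=(n+m)l-mj$. Once such $j,l$ exist, I take $\delta=(x+2\pi j)/(n+m)$. The coefficient identities then hold, and $h_1\sim h_2$ via a) or b) respectively.

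\emph{Main obstacle.} The delicate step is solving $N=(n+m)l-mj$ in integers. This requires $\gcd(n+m,m)=\gcd(n,m)$ to divide $N$. Under the standing assumption of this paper that $n$ and $m$ are coprime, B\'ezout's identity solves it for every $N$, and the proof closes.

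Without coprimality, the stated congruence alone would not suffice in general. In that case one would need the refined congruence modulo $2\pi\gcd(n,m)$. I would therefore state explicitly that the converse direction uses $\gcd(n,m)=1$, as in the definition of the space $H$.
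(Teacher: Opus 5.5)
The paper gives no proof of this theorem: it quotes it from \cite{barrera2024egervary} in the preliminaries, so there is no in-paper argument to compare with. Your proof is correct and self-contained.

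Comparing coefficients of $z^{n+m}$, $\overline{z}\,^m$ and $1$ is legitimate, since these functions are linearly independent. On a circle $|z|=\rho$ they carry the distinct Fourier modes $n+m$, $-m$, $0$. The weights $m$, $n+m$, $-(n+2m)$ are exactly the ones that cancel both $\kappa$ and $\delta$. The converse then reduces, as you say, to solving $N=(n+m)l-mj$ in integers, which B\'ezout handles because $\gcd(n+m,m)=\gcd(n,m)$.

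Your coprimality caveat is substantive. The statement as reproduced here does not list $\gcd(n,m)=1$ among its hypotheses, and the ``if'' direction genuinely fails without it. Take $n=m=2$, $h_1(z)=z^4+i\,\overline{z}\,^2+1$ and $h_2(z)=z^4+\overline{z}\,^2+1$.
\begin{itemize}
\item The moduli agree, and the combination equals $4\cdot\frac{\pi}{2}\equiv 0 \pmod{2\pi}$ for either sign.
\item Yet both a) and b) force $k=1$, $e^{4i\delta}=1$ and $e^{-2i\delta}=i$.
\item Squaring the last identity gives $e^{-4i\delta}=-1$, a contradiction.
\end{itemize}

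Your refined criterion, the congruence modulo $2\pi\gcd(n,m)$, is the correct general statement. It is well defined: changing the chosen representatives of the arguments shifts the combination by multiples of $2\pi m$, $2\pi(n+m)$ and $2\pi(n+2m)$, all of which are divisible by $2\pi\gcd(n,m)$.

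One cosmetic slip: with $\kappa:=\gamma_1-\sigma\gamma_2$ you need $\sigma=+1$ for the minus-sign congruence (case a)) and $\sigma=-1$ for the plus-sign congruence (case b)). That is, $\sigma=\mp1$, not $\pm1$.
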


\begin{example}
    The harmonic trinomials $h_1(z)=z^{5}+3\overline{z}\,^2+2$ and $h_2(z)=2z^{5}-6\overline{z}\,^2-4$ are equivalent since 
    $$
    \frac{|1|}{|2|}=\frac{|3|}{|-6|}=\frac{|2|}{|-4|},\quad\text{ and}
    $$
    $$
        2(0\pm0)+5(0\pm \pi)-7(0\pm\pi)\equiv 0\mod{2\pi}.
    $$
\end{example}

\subsection{Results presented by Thorsten y Wolff for trinomial equations}
In \cite{theobald2016norms}, Thorsten, T. and Wolff, T. presented some advances in the study of the parameter space of the coefficients of trinomials of the form
\begin{equation*}
 f(z)=z^{n+m} + bz^m + c, \,\,\,\,z\in\mathbb{C},   
\end{equation*}
where \( b, c \in \mathbb{C} \) and \( n, m \) are positive coprime integers.

Since our interest focuses on the study of the parameter space of the coefficients of harmonic trinomials, in this subsection we provide a summary of the results presented in Section 4 of \cite{theobald2016norms} in order to identify the differences and similarities between the results established for trinomials and those obtained when considering harmonic trinomials.
\begin{itemize}
    \item First, they show that, for a fixed \( c \in \mathbb{C}^* \), the geometric structure of the set of all \( b \in \mathbb{C} \) such that \( f \) has a root of norm \( v \) corresponds to a hypotrochoid, where
\begin{equation*}
\label{param_b_arm}
   -b\in\Big\{(R-r)\cdot e^{i\cdot\varphi}+d\cdot e^{i\big(\gamma+\frac{(R-r)}{2R}\cdot\varphi\big)}:\varphi\in[0,2\pi)\Big\}
\end{equation*}
and 
$R=\frac{v^n}{m}(n+m)$, $r=\frac{v^n}{m}\cdot n$ and $d=|c|\cdot v^{-m}$. They also show that, for a fixed \( b \in \mathbb{C}^* \), the geometric structure of the set of all \( c \in \mathbb{C} \) such that \( f \) has a root of norm \( v \) corresponds to an epitrochoid (see Theorems 4.1, 4.16 in \cite{theobald2016norms}).

\item Then, they show that, for a trinomial \( f \), there exist at most two roots with norm \( v \). Moreover, two roots of \( f \) have the same norm \( v \) if and only if \( b \) lies on a union of a set of rays. This set consists of several rays that all start at the origin in the complex plane and extend outward in specific directions determined by the parameters $n,m\in\mathbb{N}^*$ and $\gamma=\arg c$. Each ray corresponds to one of the possible angles $\theta_k=\frac{(n+2m)\gamma+k\pi}{n+m}$ for $k=0,1,\cdots,2(n+m)-1$.
Which are described as even or odd rays according to whether \( k \) is even or odd.
In the same sense, they also prove that there exist two roots of \( f \) with the same norm if and only if \( b \) is a singular point of a hypotrochoid (see Proposition 4.3, Theorems 4.4, 4.5 and Corollary 4.6 in \cite{theobald2016norms}).

\item They remark that for trinomials with nonzero real coefficients, and after ordering all the roots according to their norm, the real roots of \( f \) occur in the positions \( 1, m, m+1 \), or \( n+m \) (see Theorem 4.8 in \cite{theobald2016norms}).

\item They also define a set \( U_j^A \) as the set of all trinomial equations \( f \) whose \( j \)-th and \( (j+1) \)-th roots (ordered by their norms) have different norm, for \( 1 \leq j \leq n+m-1 \). Subsecuently, this is used to show that, for a fixed \( c \in \mathbb{C}^* \) and given \( f_b = z^{n+m} + bz^m + c \), a parametric family of trinomials with parameter \( b \in \mathbb{C} \), the following holds: if \( n+j \) is even (respectively odd), \( f_b \) belongs to \( U_j^A \) if and only if \( b \) does not belong to the union of rays in even (respectively odd) position. Moreover, by using the discriminant defined by Greenfield and Drucker in \cite{GREENFIELD1984105}, they prove that any trinomial on the hypersurface defined by this discriminant lies on the boundary of \( (U_m^A)^c \). Finally, they show that for a fixed \( c \in \mathbb{C}^* \), if \( n+j \) is even (respectively odd), \( f \) belongs to the set \( U_m^A \) if and only if \( b \) does not belong to a closed disk of radius \( r \) intersected with the union of rays in even (respectively odd) position (see Theorem 4.9 and Corollaries 4.12, 4.13 in \cite{theobald2016norms}).
\end{itemize}

\subsection{Trochoids}
\label{Trochoids}
Section \ref{Results} presents the parametrization of certain harmonic trinomials using trochoid curves. Now, we review the definitions corresponding to these parametric curves.

\begin{definition}
    A \textit{trochoid} is described in \cite{lawrence2013catalog} and \cite{MONSINGH2024100928} as the set of points in the plane generated by a fixed point on the radius of a circle (or an extension of it) that rolls along a fixed straight line (or a fixed circle) without slipping.

    The general parametrization of trochoids is given by:
\begin{equation}
\label{Param_gnral_trocoide}
\begin{matrix}
    u(\varphi) & = & u_0+a\cos(x_1 \varphi+\eta)+d\cos(x_2 \varphi+\gamma),\\
    v(\varphi) & = & v_0+a\sin(x_1 \varphi+\eta)+d\sin(x_2 \varphi+\gamma),
\end{matrix}
\end{equation}
where $a$, $d$, $x_1$, $x_2$, $\eta$, and $\gamma$ are constants, and $\varphi$ is the independent variable.  
The center of the trochoid is given by $(u_0,v_0)$. For simplicity, when this center is the origin, we will refer to it simply as a \textit{trochoid}.
\end{definition}

We will be interested in the parametrization of a trochoid as in \eqref{Param_gnral_trocoide}, centered at the origin, with parameters $R, r, d \in \mathbb{R}^+$, where $a = R - r$, $\eta=0$, $x_1=1$ y $x_2 = \frac{R - r}{2R}$:
\begin{equation*}
    \begin{matrix}
        u(t) & = & (R - r)\cos(t) + d\cos\bigg(\frac{R - r}{2R}t + \gamma\bigg),\\[0.2cm]
        v(t) & = & (R - r)\sin(t) + d\sin\bigg(\frac{R - r}{2R}t + \gamma\bigg).
    \end{matrix}
\end{equation*}
Whose complex equation takes the form:
\begin{equation}
\label{ComplexForm}
  T(\varphi)= (R-r)e^{i\varphi}+de^{i\big(\gamma+\frac{R-r}{2R}\varphi\big)}.
\end{equation}

\subsection{Jacobian of harmonic functions}
Since it is not possible to extend the discriminant result presented by Greenfield and Drucker in \cite{GREENFIELD1984105} to harmonic trinomials with nonreal complex variable, we now provide definitions, remarks, and computations that will later be used to address the interpretation of the discriminant for harmonic trinomials.
\begin{definition}
    A harmonic polynomial is a harmonic function (that is, it satisfies Laplace’s equation) of the form $H(z) = P(z) + \overline{Q(z)}$,
where \( P \) and \( Q \) are complex polynomials and \( z = x + iy \) with real variables \( x \) and \( y \).
\end{definition}
A complex function (such as \( P \) and \( Q \)) is analytic in a region if it satisfies the Cauchy–Riemann equations. Moreover, if a function is analytic, then its real and imaginary parts are harmonic, and hence \( H(z) \) is harmonic \cite{Duren_2004}.

Taking into account the previous observations, let \( P(z) = P_1(x, y) + iP_2(x, y) \) and \( Q(z) = Q_1(x, y) + iQ_2(x, y) \), where \( P \) and \( Q \) are analytic functions satisfying the Cauchy–Riemann equations. Then, we have
$P'(z)  = \frac{\partial P_1}{\partial x} + i\frac{\partial P_2}{\partial x} = \frac{\partial P_1}{\partial x} - i\frac{\partial P_1}{\partial y}, \quad$ y $
Q'(z) = \frac{\partial Q_1}{\partial x} + i\frac{\partial Q_2}{\partial x} = \frac{\partial Q_1}{\partial x} - i\frac{\partial Q_1}{\partial y}.$
Hence, the Jacobian of \(H(z)\) is given by
\begin{equation}
\label{Jacobiano}
    J_H(z)\;=\;|P'(z)|^2-|Q'(z)|^2.
\end{equation}

In \cite{Duren_2004}, \cite{Duren01051996} the \textit{order} of a zero $z_0$ of a harmonic function $H(z) = P(z) + \overline{Q(z)}$ is defined when the Jacobian as in \eqref{Jacobiano}, is nonzero, and this definition it given in terms of Taylor's expansions:
\begin{equation*}
    P(z)=p_0+\sum_{k=1}^{\infty}p_k(z-z_0)^k,\qquad Q(z)=q_0+\sum_{k=1}^{\infty}q_k(z-z_0)^k.
\end{equation*}
If $H$ sense-preserving, $z_0$ has the order of the series $\sum_{k=1}^{\infty}p_k(z-z_0)^k$ and if $H$ sense-reversing, $z_0$ has the order of the series $\sum_{k=1}^{\infty}q_k(z-z_0)^k$.

When $J_H(z_0)=0$, the harmonic trinomial equation has zeros of multiplicity at least two, which does not necessarily correspond to the given definition of order de $z_0$.

Moreover, following the ideas presented in \cite{S_te_2021}, the critical curve of $H$, defined as the set of all points in $\mathbb{C}$ where the Jacobian vanishes, corresponds, in the case of harmonic trinomials, to a circle centered at the origin of a specific radius.

For example, $z_0=1$ is a zero of multiplicity two for $H=z^2-2\overline{z}+1$, where $J_H=0$ at $z_0=1$. Rewriting $H$ as $H=(z-1)^2+2(z-1)-\overline{2(z-1)}$, we observe that $z_0=1$ is a zero of ``order" 1.

\section{Results}
\label{Results}

In this section, we describe the space $H$ of all harmonic trinomials defined as in \eqref{fnc}, identify the geometry of harmonic trinomials having one or two roots with the same norm, and study some geometric and topological properties of the subsets $U_j$ and their complements defined in the Introduction.

\begin{theorem}
\label{teo:param_trocoide}
    Let $h(z)=z^{n+m}+b\,\overline{z}\,^m+c$ with $b\in\mathbb{C}$ and $c\in\mathbb{C}^*$, be a harmonic trinomial, and let $v\in\mathbb{R}^+$. Then $h$ has a root of norm $v$ if and only if $b$ lies on a trochoid (up to rotation) with parameters $R=\big(\frac{v^n}{2m}\big)(n+2m)$, $r=\frac{v^n}{2m}\cdot n$, and $d=\frac{|c|}{v^m}$.
\end{theorem}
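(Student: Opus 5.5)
Write a root of norm $v$ as $z=ve^{i\theta}$ with $\theta\in[0,2\pi)$. Since $\overline{z}=ve^{-i\theta}$, the equation $h(z)=0$ reads
\[
v^{n+m}e^{i(n+m)\theta}+b\,v^m e^{-im\theta}+c=0 .
\]
The term multiplying $b$ never vanishes, so we can solve for $b$:
\[
-b=v^{n}e^{i(n+2m)\theta}+\frac{|c|}{v^m}\,e^{i(\gamma+m\theta)},\qquad \gamma=\arg c .
\]
Hence $h$ has a root of norm $v$ if and only if $-b$ lies on the closed curve
\[
\Gamma=\Bigl\{v^ne^{i(n+2m)\theta}+d\,e^{i(\gamma+m\theta)}:\theta\in[0,2\pi)\Bigr\},\qquad d=\frac{|c|}{v^m}.
\]
Both directions of the equivalence are the same reversible computation. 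Given $b$ on the (rotated) curve, the parameter $\theta$ produces the root $ve^{i\theta}$. Conversely, any root of norm $v$ produces such a $\theta$.

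The remaining step is to put $\Gamma$ in the normal form \eqref{ComplexForm} with the prescribed $R,r$. Reparametrize by $\varphi=(n+2m)\theta$. Then the first term becomes $v^ne^{i\varphi}$, and the second becomes $d\,e^{i(\gamma+\frac{m}{n+2m}\varphi)}$. With $R=\frac{v^n}{2m}(n+2m)$ and $r=\frac{v^n}{2m}n$ we get
\[
R-r=\frac{v^n}{2m}\cdot 2m=v^n,\qquad \frac{R-r}{2R}=\frac{v^n}{2\cdot\frac{v^n}{2m}(n+2m)}=\frac{m}{n+2m}.
\]
These match both coefficients exactly. So $\Gamma$ is the trochoid $T(\varphi)$ of \eqref{ComplexForm} with these $R,r,d$, and $b$ lies on $-T$, which is $T$ rotated by $\pi$. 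This accounts for the phrase ``up to rotation.'' One could instead absorb the sign by replacing $\gamma$ with $\gamma+\pi$ and adding $\pi$ to $\varphi$, again at the cost of a rotation of the whole curve.

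The main point needing care is the parameter range. As $\theta$ runs over $[0,2\pi)$, $\varphi$ runs over $[0,2\pi(n+2m))$, not $[0,2\pi)$. The curve closes up only after this full range, since $e^{i\frac{m}{n+2m}\varphi}$ has period $2\pi(n+2m)/m$. I would also check that no points are lost or duplicated by the reparametrization. Surjectivity of $\theta\mapsto\varphi$ onto the interval is clear, so the map from the circle $|z|=v$ onto the curve is onto. Together with the reversibility above, this gives both directions of the equivalence.

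Finally, coprimality of $n,m$ is not needed for the set-theoretic statement. It only matters for the finer geometry, such as the number of cusps or loops, which is handled later. The case $b=0$ needs no separate treatment, since the derivation only divides by $v^me^{-im\theta}\neq0$.
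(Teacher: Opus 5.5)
Your proposal is correct and follows the paper's proof essentially step for step: solve $h(ve^{i\theta})=0$ for $-b$, substitute $\varphi=(n+2m)\theta$, and check that $R-r=v^n$ and $\frac{R-r}{2R}=\frac{m}{n+2m}$. Your care with the parameter range actually improves on the paper, which writes $\varphi\in[0,2\pi)$ and so only accounts for roots with argument in $[0,2\pi/(n+2m))$; the correct range is $\varphi\in[0,2\pi(n+2m))$, as you state.
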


\begin{remark}
    The following proof is an adaptation of the proof of Theorem 4.1 in \cite{theobald2016norms}, expressed now in the context of harmonic trinomial equations.
\end{remark}

\begin{proof}
Let $\zeta = v e^{i\theta}$ be a root of $h$ and write $c = |c| e^{i\gamma}$, then $- b = v^{n} e^{i(n+2m)\theta} + \frac{|c|}{v^{m}} e^{i(\gamma + m\theta)}$ . Using $R-r=v^n$, $\dfrac{R-r}{2R}=\dfrac{m}{n+2m}$, and the change of variable $\varphi=(n+2m)\theta$, we obtain $m\theta=\Big(\dfrac{R-r}{2R}\Big)\varphi$  and hence
\begin{equation}
\label{trochoid}
    -b\in\Big\{(R-r)e^{i\varphi}+de^{i\big(\gamma+\frac{R-r}{2R}\varphi\big)} : \varphi\in[0,2\pi)\Big\}.
\end{equation}
Which corresponds to the complex form of the parametrization of a trochoid centered at the origin, as in \eqref{ComplexForm}, defined in Subsection \ref{Trochoids}.
\end{proof}

\begin{example}
\label{example_hip}
    The harmonic trinomial $h_1(z)=z^8+b\,\overline{z}\,^3+\frac{1}{2}$, $h_2(z)=z^7+b\,\overline{z}\,^2+2$ and $h_3(z)=z^5+b\,\overline{z}+1$ have, respectively, a root of norm one if and only if $b\in\mathbb{C}$ is located on the trochoid with parameters $(R,r,d)=(11/6,5/6,1/2)$, $(R,r,d)=(9/4,5/4,2)$ and $(R,r,d)=(3,2,1)$, respectively. See Figures \ref{fig:1}, \ref{fig:2} and \ref{fig:3}.\\
The parameterization of $-b$ in a harmonic trinomial equation in the Theorem  \eqref{teo:param_trocoide} to a \textit{trochoid} details similarities to a \textit{hypotrochoid} as presented in \cite{theobald2016norms}. The presence of $\big(\frac{R-r}{2R}\big)$ in the second term of \eqref{trochoid} instead of $\big(\frac{r-R}{r}\big)$ in the second term of the parameterization of a standard hypotrochoid changes the periodicity and behavior of this curve.\\

\begin{figure}[h!] 
    \centering 
    \begin{subfigure}[b]{0.3\textwidth} 
        \centering
        \includegraphics[width=\textwidth]{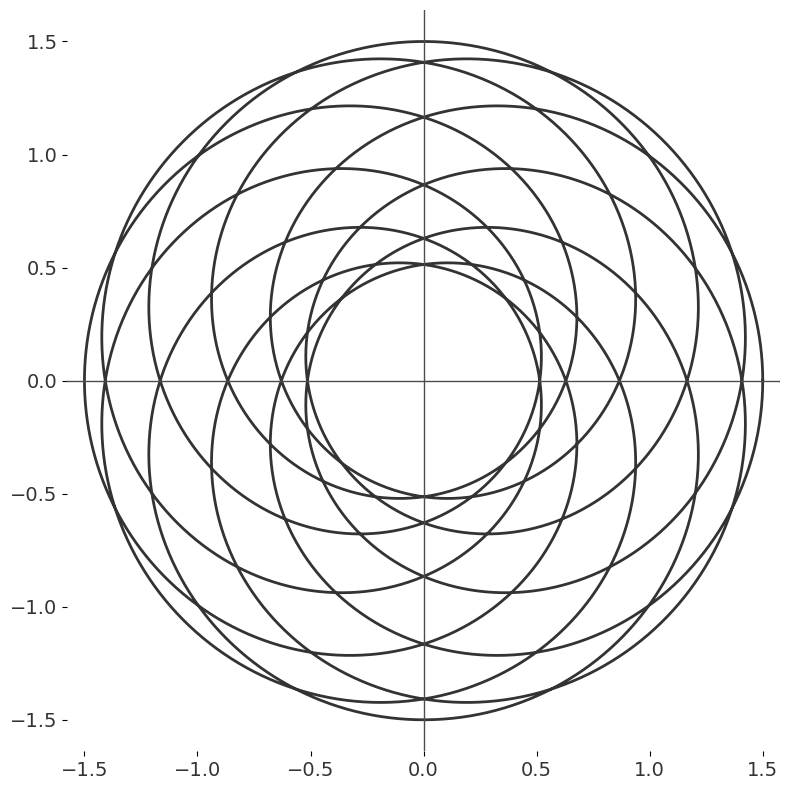} 
        \caption{\small{Trochoid with parameters $(R,r,d)=(11/6,5/6,1/2)$.}}
        \label{fig:1}
    \end{subfigure}
    \hfill 
    \begin{subfigure}[b]{0.3\textwidth}
        \centering
        \includegraphics[width=\textwidth]{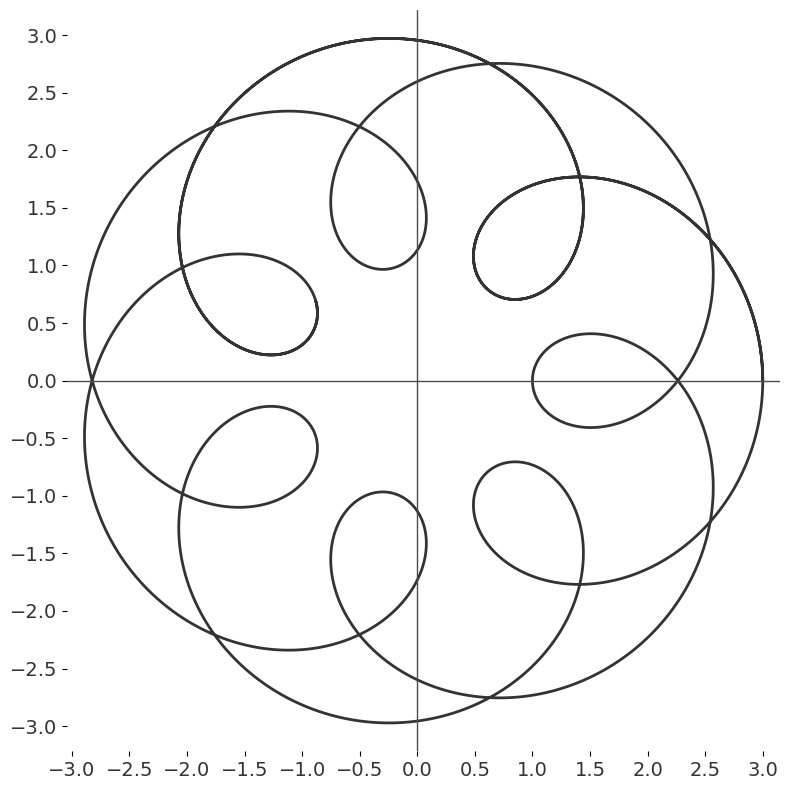}
        \caption{\small{Trochoid with parameters $(R,r,d)=(9/4,5/4,2)$.}}
        \label{fig:2}
    \end{subfigure}
    \hfill
    \begin{subfigure}[b]{0.3\textwidth}
        \centering
    \includegraphics[width=\textwidth]{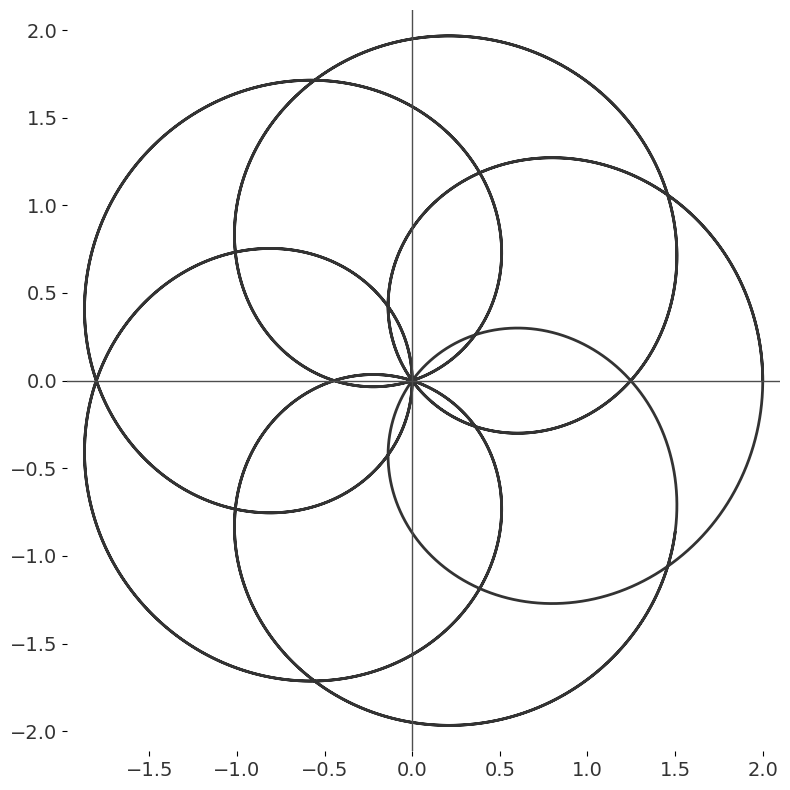}
        \caption{\small{Trochoid with parameters $(R,r,d)=(3,2,1)$.}}
        \label{fig:3}
    \end{subfigure}
    \caption{\small{Parametrization of $h_1$, $h_2$ y $h_3$ with a root of norm $v=1$ to trochoids with parameters $R$, $r$ y $d$.}} 
    \label{fig:multiple}
\end{figure}
\end{example}

\begin{theorem}
\label{teo:param_trocoide2}
Let \( h(z) = z^{n+m} + b\overline{z}\,^m + c \) with \( b \in \mathbb{C} \) and \( c \in \mathbb{C}^* \) be a harmonic trinomial, and let \( v \in \mathbb{R}^+ \). Then \( h \) has a root of norm \( v \) if and only if \( c \) is positioned, up to rotation, on a trochoid with the following parameters $R = \frac{v^m \cdot |b| \cdot n}{n+m}$, $r = \frac{v^m \cdot |b| \cdot m}{n+m}$, and 
$d = v^{n+m}$.
\end{theorem}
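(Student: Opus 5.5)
We follow the proof of Theorem~\ref{teo:param_trocoide}, exchanging the roles of $b$ and $c$. Write $b=|b|e^{i\beta}$. The idea is to express $-c$ in terms of a root of modulus $v$ and recognize the resulting set as a curve of the form \eqref{ComplexForm}.

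\emph{Step 1: the equivalence.} Suppose $\zeta=ve^{i\theta}$ is a root of $h$. Since $\overline{\zeta}^{\,m}=v^me^{-im\theta}$, the equation $h(\zeta)=0$ is equivalent to
\[
-c \;=\; v^{n+m}e^{i(n+m)\theta}+|b|\,v^{m}e^{i(\beta-m\theta)} .
\]
Conversely, if $-c$ has this form for some $\theta\in[0,2\pi)$, then $ve^{i\theta}$ is a root of norm $v$. So $h$ has a root of norm $v$ if and only if $-c$ lies on the image of $\theta\mapsto v^{n+m}e^{i(n+m)\theta}+|b|v^me^{i(\beta-m\theta)}$. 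Replacing $c$ by $-c$ is a rotation by $\pi$, which the statement's ``up to rotation'' allows.

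\emph{Step 2: reparametrize.} I would make one term into $e^{i\varphi}$, as was done with $\varphi=(n+2m)\theta$ in Theorem~\ref{teo:param_trocoide}. Taking $\varphi=-m\theta$, the second term becomes $|b|v^me^{i(\beta+\varphi)}$. The first term becomes $d\,e^{-i\frac{n+m}{m}\varphi}$ with $d=v^{n+m}$, which matches the stated value of $d$. The sign of the frequency is reversed relative to \eqref{ComplexForm}; I would remove it by complex conjugation (a reflection). This is legitimate because, by Theorem~\ref{teo:Egervaryarm1}, $h$ and $\overline{h}$ are Egerváry equivalent, so both have roots of the same moduli. A further rotation $e^{-i\beta}$ normalizes the phase, and the phase of the $d$-term then plays the role of $\gamma$ in \eqref{ComplexForm}.

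\emph{Step 3: match the constants (main obstacle).} It remains to identify the amplitude of the $e^{i\varphi}$-term with $R-r$ and the frequency ratio with $\frac{R-r}{2R}$, for the stated $R,r$. This is where I expect trouble. With $R=\frac{v^m|b|n}{n+m}$ and $r=\frac{v^m|b|m}{n+m}$ we get $R-r=\frac{v^m|b|(n-m)}{n+m}$ and $\frac{R-r}{2R}=\frac{n-m}{2n}$. Neither agrees with the raw amplitude $|b|v^m$ or the raw ratio $\frac{n+m}{m}$ (or $\frac{m}{n+m}$ after swapping the roles of the two terms).

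I would first try the alternative choice $\varphi=(n+m)\theta$, which puts $d=v^{n+m}$ on the $e^{i\varphi}$-term, and check whether some rescaling of $\varphi$ together with the rotation/reflection freedom reconciles the constants. If it does not, the honest conclusion is that the locus is a trochoid in the general sense of \eqref{Param_gnral_trocoide}, namely
\[
u_0=v_0=0,\qquad a=|b|v^m,\qquad x_1=1,\qquad x_2=\tfrac{n+m}{m}\ \text{(after reflection)} .
\]
In that case the stated $(R,r)$ would need to be adjusted so that $R-r=|b|v^m$ and $\frac{R-r}{2R}$ equals the frequency ratio. Beyond that, the argument is a direct substitution.
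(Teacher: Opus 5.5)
Your Step 1 is right, and it is essentially all the paper offers. Its proof only says ``analogous to Theorem~\ref{teo:param_trocoide}'', i.e.\ substitute $\zeta=ve^{i\theta}$ and match constants. The obstacle you found in Step 3 is genuine. However, your repair in Step 2 does not work, and it sends your fallback to the wrong curve.

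Complex conjugation reverses \emph{both} frequencies. Conjugating $|b|v^me^{i(\beta+\varphi)}+v^{n+m}e^{-i\frac{n+m}{m}\varphi}$ and renaming $\varphi\mapsto-\varphi$ to restore $e^{i\varphi}$ on the first term puts $e^{-i\frac{n+m}{m}\varphi}$ straight back on the second. The fact that the two terms turn in opposite senses is invariant under rotations, reflections and reparametrizations, and the appeal to Theorem~\ref{teo:Egervaryarm1} cannot change it. So the locus is intrinsically a \emph{hypo}trochoid.

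An invariant that detects this is the number of points on the outer circle. The locus of $-c$ touches the circle $|w|=|b|v^m+v^{n+m}$ exactly when $(n+2m)\theta\equiv\beta\pmod{2\pi}$. That gives $n+2m$ distinct points, since $\gcd(n+m,n+2m)=1$. By contrast, any curve $Ae^{i\varphi}+De^{i(\delta+\frac{n+m}{m}\varphi)}$ with $A,D>0$ touches its outer circle at exactly $n$ points. So your $x_2=\frac{n+m}{m}$ ``after reflection'' is wrong; it must be $x_2=-\frac{n+m}{m}$, with no reflection.

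No choice of $\varphi$ or rescaling will reconcile the constants either. The printed $R=\frac{v^m|b|n}{n+m}$, $r=\frac{v^m|b|m}{n+m}$, $d=v^{n+m}$ are the epitrochoid constants of the \emph{analytic} case $z^{n+m}+bz^m+c$ of \cite{theobald2016norms}: they solve $R+r=|b|v^m$ and $\frac{R+r}{r}=\frac{n+m}{m}$. They fail for the harmonic locus in every reading:
\begin{itemize}
\item Read as an epitrochoid, they fail the point count above.
\item Read in \eqref{ComplexForm}, or as a standard hypotrochoid, the curve has modulus at most $\frac{|n-m|}{n+m}|b|v^m+v^{n+m}$. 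This is strictly less than $|b|v^m+v^{n+m}$ for $b\neq0$.
\end{itemize}
Concretely, in the reading \eqref{ComplexForm} with $n=2$, $m=1$, $v=b=1$, the statement would force $|c|\le\frac43$. Yet $z^3+\overline{z}-2$ has the root $1$.

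So the statement as printed cannot be proved, and the paper's one-line proof overlooks this. What your computation does prove, after substituting $\varphi=\beta-m\theta$ and then $\varphi=\psi+\frac{(n+m)\beta}{n+2m}$, is
\[
-c=e^{i\frac{(n+m)\beta}{n+2m}}\Bigl(|b|v^me^{i\psi}+v^{n+m}e^{-i\frac{n+m}{m}\psi}\Bigr).
\]
That is, $h$ has a root of norm $v$ if and only if $c$ lies, up to rotation, on the hypotrochoid $(R-r)e^{i\psi}+d\,e^{-i\frac{R-r}{r}\psi}$ with $R=\frac{(n+2m)|b|v^m}{n+m}$, $r=\frac{m|b|v^m}{n+m}$, $d=v^{n+m}$. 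This agrees with applying the analytic result of \cite{theobald2016norms} (the locus of the middle coefficient is a hypotrochoid) to $z^{n+2m}+cz^m+bv^{2m}$ from the proof of Proposition~\ref{raicesdeh}, where $c$ has become the middle coefficient.
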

\begin{proof}
    The proof is analogous to that of Theorem \eqref{teo:param_trocoide}.
\end{proof}

\begin{example}
    As a canonical counterpart to the examples in \eqref{example_hip}, the harmonic trinomials 
$h_4 = z^8 + \frac{3}{2}\overline{z}\,^3 + c, 
h_5 = z^7 - \frac{7}{2}\overline{z}\,^2 + c \text{\,\,\,and\,\,\,} 
h_6 = z^5 + \overline{z} + c$ have a root of norm one if and only if \( c \) is positioned on the trochoids with parameters $(R,r,d)=(15/16,9/16,1), (5/2,1,1)$ and $(4/5,1/5,1)$,
respectively. See Figures \ref{fig:7}, \ref{fig:8} and \ref{fig:9}.
\begin{figure}[h!] 
    \centering 
    \begin{subfigure}[b]{0.3\textwidth} 
        \centering
        \includegraphics[width=\textwidth]{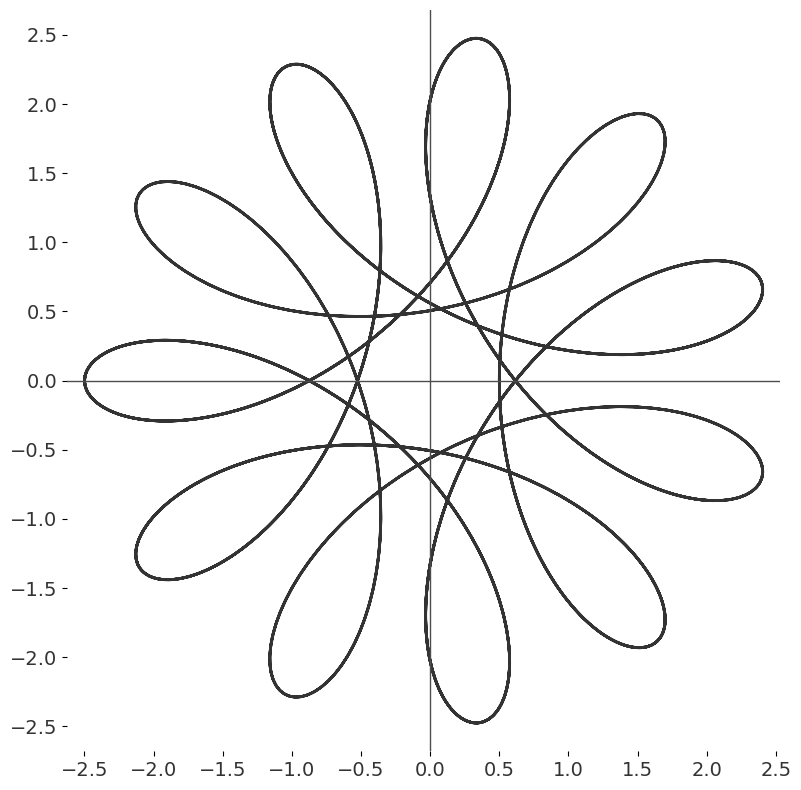} 
        \caption{\small{Trochoid with parameters $(R,r,d)=(15/16,9/16,1)$.}}
        \label{fig:7}
    \end{subfigure}
    \hfill 
    \begin{subfigure}[b]{0.3\textwidth}
        \centering
        \includegraphics[width=\textwidth]{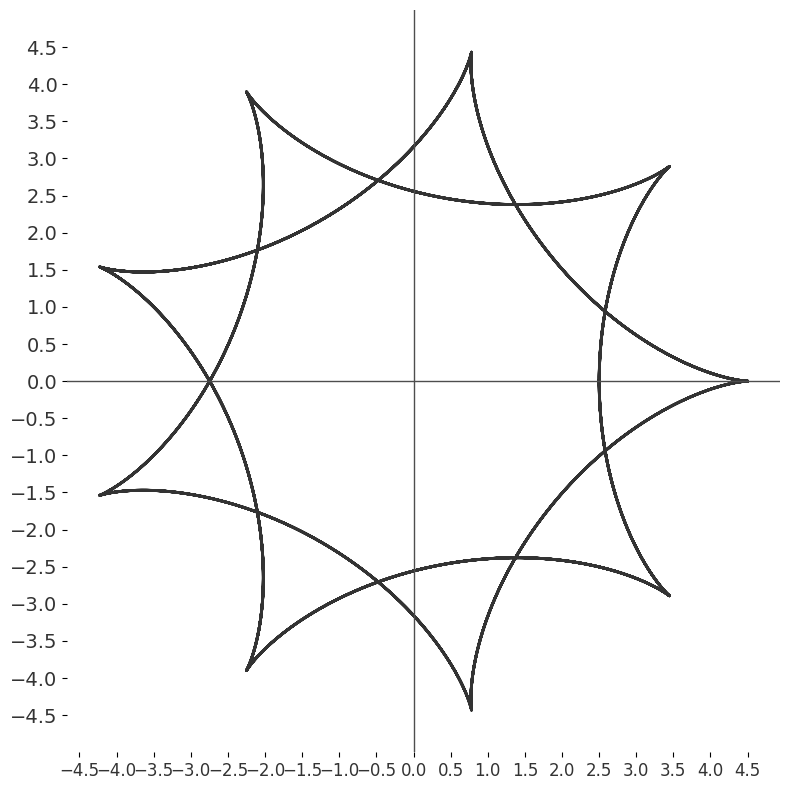}
        \caption{\small{Trochoid with parameters $(R,r,d)=(5/2,1,1)$.}}
        \label{fig:8}
    \end{subfigure}
    \hfill
    \begin{subfigure}[b]{0.3\textwidth}
        \centering
        \includegraphics[width=\textwidth]{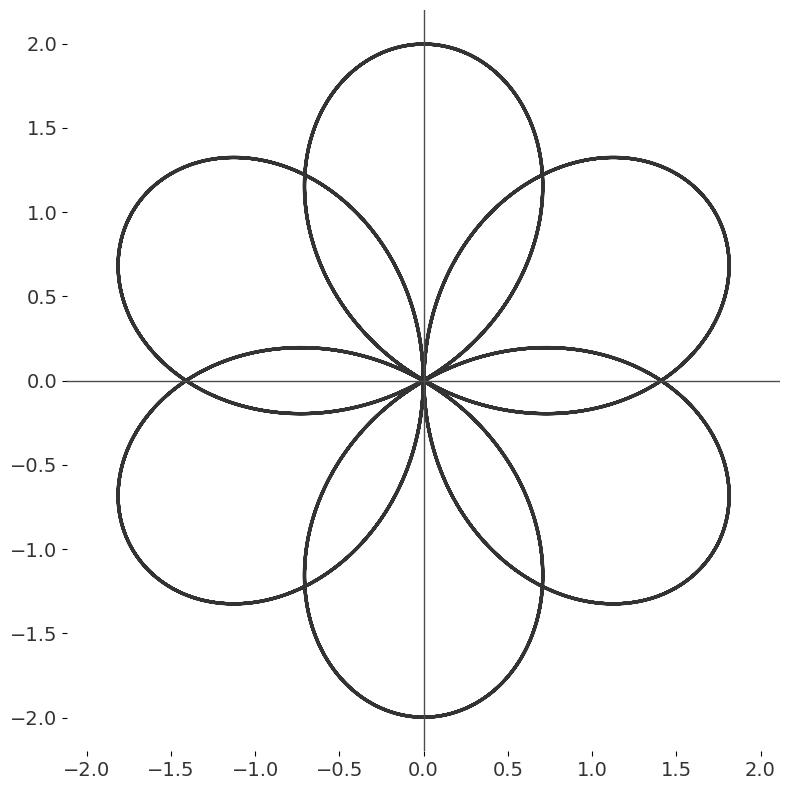}
        \caption{\small{Trochoid with parameters $(R,r,d)=(4/5,1/5,1)$.}}
        \label{fig:9}
    \end{subfigure}
    \caption{\small{Parametrization of $h_4$, $h_5$ y $h_6$ with a root of $v=1$ to trochoids with parameters $R$, $r$ y $d$.}} 
    \label{fig:multiple}
\end{figure}
\end{example}

\begin{proposition}
\label{raicesdeh}
    Let $h(z)=z^{n+m}+b\,\overline{z}\,^m+c$, with $b,c\in\mathbb{C}^*$, $n$ and $m$ coprime integers, and 
    $v\in\mathbb{R}^+$. Let $\zeta$ be a root of $h$ such that $|\zeta|=v$. Then $h$ has at most two roots of norm $v$.
\end{proposition}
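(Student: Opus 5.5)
The plan is to restrict $h$ to the circle $|z|=v$ and turn the equation $h(z)=0$ into a statement about three planar vectors of fixed lengths summing to zero, that is, a (possibly degenerate) triangle. Its shape is rigid up to a reflection. The coprimality of $n$ and $m$ should then pin down the angle $\theta$ of the root uniquely for each of the two orientations of that triangle. This is in the spirit of the triangle picture behind Theorem~\ref{BohlArmonico}.

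Concretely, write $z=ve^{i\theta}$, $b=|b|e^{i\beta}$, $c=|c|e^{i\gamma}$; both arguments are defined since $b,c\in\mathbb{C}^*$. Then $h(z)=0$ reads $v^{n+m}e^{i(n+m)\theta}+|b|v^m e^{i(\beta-m\theta)}+|c|e^{i\gamma}=0$. Multiplying by $e^{i(m\theta-\beta)}$ gives
\begin{equation*}
v^{n+m}e^{i((n+2m)\theta-\beta)}+|b|v^m+|c|e^{i(\gamma+m\theta-\beta)}=0 .
\end{equation*}
So $x:=(n+2m)\theta-\beta$ and $y:=\gamma+m\theta-\beta$ are angles for which the vectors of lengths $L_1=v^{n+m}$, $L_2=|b|v^m$, $L_3=|c|$, the middle one lying on the positive real axis, close up to zero.

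The first key step is to check that, for fixed lengths, such a closed configuration with one side fixed on the positive real axis is determined up to complex conjugation. The set of admissible pairs $(x,y)$ modulo $2\pi$ is either empty, or a single pair, or the two pairs $(x_0,y_0)$ and $(-x_0,-y_0)$. This follows from the law of cosines: $L_3^2=|L_1e^{ix}+L_2|^2$ determines $\cos x$. Once $x$ is fixed, $e^{iy}=-(L_1e^{ix}+L_2)/L_3$ is determined. The pair is unique exactly in the degenerate case, where $\sin x=0$.

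The second key step, which I expect to be the only real content, is to show that a given pair $(x,y)$ modulo $2\pi$ determines at most one $\theta$ modulo $2\pi$. From $(x,y)$ we know $(n+2m)\theta\equiv x+\beta$ and $m\theta\equiv y-\gamma+\beta \pmod{2\pi}$. Since $\gcd(m,n+2m)=\gcd(m,n)=1$, choose integers $p,q$ with $pm+q(n+2m)=1$. Then
\begin{equation*}
\theta\equiv p(y-\gamma+\beta)+q(x+\beta)\pmod{2\pi},
\end{equation*}
so $\theta$ is uniquely determined modulo $2\pi$. Combining the two steps, each of the at most two admissible pairs yields at most one root on the circle $|z|=v$, hence $h$ has at most two roots of norm $v$, and exactly one when the triangle is degenerate.

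The main care point is the reduction to the "$\pm$" ambiguity of the triangle: after normalizing by the phase of the $b$-term, the remaining freedom has to be exactly the reflection and nothing more. The argument above handles this by normalizing the $b$-term to lie on the positive real axis, which removes the rotational freedom entirely. The coprimality hypothesis is used precisely once, to invert the pair of congruences.
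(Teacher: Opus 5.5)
Your proof is correct, and it takes a different route from the paper.

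\textbf{The paper's route.} It uses $\overline{\zeta}=v^2/\zeta$ on the circle $|z|=v$ to rewrite $h(\zeta)=0$ as $p(\zeta)=0$ for the analytic trinomial $p(z)=z^{n+2m}+cz^m+bv^{2m}$. It notes that $\gcd(n+2m,m)=\gcd(n,m)=1$, and then cites Proposition~4.3 of \cite{theobald2016norms} for the bound of two roots of a given modulus.

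\textbf{Your route.} You prove the bound directly, in two steps.
\begin{enumerate}
\item Rigidity of the triangle with sides $v^{n+m}$, $|b|v^m$, $|c|$: the law of cosines leaves only the reflection $(x,y)\mapsto(-x,-y)$.
\item A B\'ezout inversion of the congruences for $(n+2m)\theta$ and $m\theta$.
\end{enumerate}
In effect you reprove the cited analytic result inside the harmonic setting, and every step checks out. The hypothesis $b\neq 0$ is what makes $\cos x$ determined. The hypothesis $c\neq 0$ is what makes $e^{iy}$ determined. Coprimality is what makes $\theta\mapsto(x,y)$ injective.

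\textbf{What each buys.} The paper's reduction is shorter. It shows that the harmonic question on a fixed circle is literally an analytic trinomial question; the same substitution drives the trochoid parametrization in Theorem~\ref{teo:param_trocoide}. However, it outsources all of the content. Your argument is self-contained and shows exactly where each hypothesis enters. It also gives extra structure: two roots on the circle correspond to mirror-image triangles, and a degenerate triangle allows only one root. This matches the triangle region in Theorem~\ref{BohlArmonico} and the double point versus cusp dichotomy in Theorem~\ref{RaizDoble}.

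\textbf{One small caveat.} Your ``exactly one'' in the degenerate case uses the standing hypothesis that a root $\zeta$ of modulus $v$ exists. Without it, the unique admissible pair need not be compatible with any $\theta$. For example, $z^2+\overline{z}+2i$ with $v=1$ gives a degenerate triangle but has no root on $|z|=1$. So your two steps alone give only ``at most one'' in that case.
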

\begin{proof}
   Let $h(z)=z^{n+m}+b\,\overline{z}\,^m+c$ with $b\in\mathbb{C}$ and $c\in\mathbb{C}^*$. Let $\zeta$ be a root of $h$ such that $|\zeta|=v\in\mathbb{R}^+$. We note that $\zeta\overline{\zeta}=v^2$, then $h(\zeta)=0$ if and only if $\zeta^{n+2m}+b\,v^{2m}+c\,\zeta^m  =0$. Define $p(z):=z^{n+2m}+cz^m+b\,v^{2m}$, where $p(\zeta)=0$. We note that $n,m$ are coprimes iff  $n+2m$ and $m$ are coprime.  
   Moreover, by Proposition~4.3 in \cite{theobald2016norms}, the polynomial $p(z)$ has at most two roots of modulus $v$.  
   Therefore, the harmonic trinomial equation $h$ has at most two roots of modulus $v$.
\end{proof}

\begin{example}
The harmonic trinomial $h=z^3-\overline{z}+(\frac{1}{3})^{3/2}$ has the following roots:

$z_1\approx 0.2005$, $z_2\approx 0.8846$, $z_3\approx 0.0479+1.0034 i$, $z_4\approx 0.0479-1.0034 i$ and $z_5\approx -1.0851$.

Where $|z_3|=|z_4|\approx1.0046$. 
\end{example}

In this context, and following the ideas of Thorsten and Wolff in \cite{theobald2016norms}, it is defined the union of rays as
\begin{equation}
\label{rayos}
    \mathcal{R}(n,m,c)=\bigcup_{0\leq k\leq 2(n+m)-1}\mathbb{R}^+\cdot e^{i\left(\frac{(n+2m)\gamma+k\pi}{n+m}\right)},
\end{equation}
for the parameters $n,m\in\mathbb{N}^*$ and $c\in\mathbb{C}^*$, given by a harmonic trinomial $h(z)=z^{n+m}+b\,\overline{z}\,^m+c$.

The $2(n+m)$ rays defined by $\mathcal{R}(n,m,c)$ in \eqref{rayos} are described as even rays ($\mathcal{R}^{\text{even}}$) or odd rays ($\mathcal{R}^{\text{odd}}$), depending on whether $k$ is even or odd, where $0\leq k\leq 2(n+m)-1$.


\begin{theorem}
\label{teo:union_rayos}
    Let $h(z)=z^{n+m}+b\,\overline{z}\,^m+c$, with $b\in\mathbb{C}$ and $c\in\mathbb{C}^*$, such that for two roots $z_1, z_2$ of $h$ we have $|z_1|=|z_2|$. Then $((n+m)\beta-(n+2m)\gamma)/\pi\in\mathbb{Z}$ iff $b\in \mathcal{R}(n,m,c)$.  
    In particular, $(U_j)^c\subseteq \mathcal{R}(n,m,c)$ for each $1\leq j\leq n+m-1$.
\end{theorem}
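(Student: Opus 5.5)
The equivalence in the statement is a restatement of the definition \eqref{rayos}, so I would prove it directly from the angles. The real content is the implication ``two roots of equal norm $\Rightarrow$ angle condition'', which gives the inclusion $(U_j)^c\subseteq\mathcal{R}(n,m,c)$. I would prove that implication with an elementary triangle-congruence argument rather than through the reduction polynomial $p$ of Proposition~\ref{raicesdeh}. Throughout I take $b\neq 0$ and write $\beta=\arg b$, $\gamma=\arg c$. If $b=0$ then $h=z^{n+m}+c$, all of whose roots trivially share one norm, so this case must be excluded, as the rays are subsets of $\mathbb{R}^+$.

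\emph{Step 1 (the equivalence).} By definition, $b\in\mathcal{R}(n,m,c)$ iff $\beta\equiv\frac{(n+2m)\gamma+k\pi}{n+m}\pmod{2\pi}$ for some $0\le k\le 2(n+m)-1$. Multiplying by $n+m$, this is equivalent to $(n+m)\beta-(n+2m)\gamma\equiv k\pi \pmod{2(n+m)\pi}$. As $k$ runs over $0,\dots,2(n+m)-1$, the numbers $k\pi$ exhaust all residues of $\pi\mathbb{Z}$ modulo $2(n+m)\pi$. Hence the condition holds for some admissible $k$ iff $((n+m)\beta-(n+2m)\gamma)/\pi\in\mathbb{Z}$.

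\emph{Step 2 (two roots of equal norm force the condition).} Let $z_1=ve^{i\theta_1}$ and $z_2=ve^{i\theta_2}$ be distinct roots. For each $j$ the three complex numbers $z_j^{n+m}$, $b\overline{z_j}^{\,m}$ and $c$ sum to zero. Their moduli $v^{n+m}$, $|b|v^m$ and $|c|$ do not depend on $j$, and their arguments are $(n+m)\theta_j$, $\beta-m\theta_j$ and $\gamma$.
\begin{itemize}
\item A closed triangle with prescribed side lengths and one side fixed (the side $c$) is determined up to reflection in the line through $c$. So there are only two possibilities.
\item In the first, $(n+m)\theta_1\equiv(n+m)\theta_2$ and $m\theta_1\equiv m\theta_2 \pmod{2\pi}$. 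Since $\gcd(n+m,m)=1$, this gives $\theta_1\equiv\theta_2$, contradicting $z_1\neq z_2$.
\item In the second,
\[
(n+m)(\theta_1+\theta_2)\equiv 2\gamma,\qquad 2\beta-m(\theta_1+\theta_2)\equiv 2\gamma \pmod{2\pi}.
\]
Multiply the first congruence by $m$ and the second by $n+m$; both remain valid modulo $2\pi$. Adding them eliminates $\theta_1+\theta_2$ and yields $2(n+m)\beta\equiv 2(n+2m)\gamma \pmod{2\pi}$. This is exactly $((n+m)\beta-(n+2m)\gamma)/\pi\in\mathbb{Z}$.
\end{itemize}
The degenerate (collinear) triangle needs no separate treatment. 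There the reflection fixes the configuration, so we are back in the first case and again reach a contradiction.

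\emph{Step 3 (the inclusion).} If $h\notin U_j$, then the $j$-th and $(j+1)$-th roots in the ordering by norm have equal norm. By Step~2 the angle condition holds, and by Step~1 this means $b\in\mathcal{R}(n,m,c)$.

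I expect the main obstacle to be the rigidity claim in Step~2, namely that the two root configurations are congruent and differ only by the reflection fixing the direction of $c$. To justify it, I would normalize by rotating everything so that $c$ lies on the positive real axis. Then the two remaining vectors are determined up to complex conjugation, because the law of cosines fixes the unoriented angles between the sides. The other possible subtlety is bookkeeping of moduli when multiplying congruences. Multiplying a congruence mod $2\pi$ by an integer only refines it, so this is harmless.
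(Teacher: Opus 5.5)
Your proof is correct, and it takes a genuinely different route from the paper's.

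\textbf{The paper's route.} The paper never examines the root configurations. It cites Corollary~1.5 of the Egerv\'ary paper: a harmonic trinomial with two roots of equal modulus is Egerv\'ary equivalent to one with real coefficients. It then uses Corollary~1.4(ii) there, which turns this into $m\alpha+(n+m)\beta-(n+2m)\gamma\equiv 0\pmod{\pi}$. It sets $\alpha=0$ and reads off the link with $\mathcal{R}(n,m,c)$ from the definition; that last step is essentially your Step~1.

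\textbf{Your route.} You observe that $z_j^{n+m}$ lies in $\{|w|=v^{n+m}\}\cap\{|w+c|=|b|v^m\}$. This set has at most two points, and they are mirror images in $\mathbb{R}c$. You then eliminate $\theta_1+\theta_2$. This is in effect a direct proof of what the cited corollaries supply: after rotating $c$ onto the positive axis, the reflection becomes complex conjugation, which is the real-coefficient picture.

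\textbf{What each buys.} The citation route is shorter and places the result within Egerv\'ary equivalence, but it hides the hypotheses inside the citation. Your argument is self-contained and shows exactly where $\gcd(n+m,m)=1$, $c\neq 0$, $b\neq 0$ and $z_1\neq z_2$ are used. It also reproves Proposition~\ref{raicesdeh} along the way.

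\textbf{The hypotheses are not cosmetic.} As you noticed, $h_0=z^{n+m}+c$ lies in no $U_j$ with $1\le j\le n+m-1$, although $0\notin\mathcal{R}(n,m,c)$. Distinctness cannot be dropped either. Take $h(z)=z^2+2i\overline{z}-1-2i$, so $b=2i$ and $c=-1-2i$. Then $h(1)=0$ and $J_h(1)=|2|^2-|2i|^2=0$, so $z=1$ is a root of multiplicity at least $2$ in the sense of Corollary~\ref{MultRoot}. Yet with $\beta=\pi/2$ and $\gamma=\pi+\arctan 2$ one gets $2\beta-3\gamma=-2\pi-3\arctan 2$. This is not in $\pi\mathbb{Z}$, since $\tan(3\arctan 2)=2/11\neq 0$. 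Hence $b=2i\notin\mathcal{R}(1,1,c)$. So the statement, and the inclusion $(U_j)^c\subseteq\mathcal{R}(n,m,c)$, hold only for distinct roots, that is, with $U_j$ defined through distinct zeros. That is exactly the reading your Step~3 uses.
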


 \begin{remark}
     The analogous result presented in Theorem 4.4 in \cite{theobald2016norms} corresponds to only one of the inclusions of the “if and only if” statement. However, Theorem \ref{teo:union_rayos} proves the double inclusion (which also holds for non-harmonic trinomials).
 \end{remark}

\begin{proof}
    Let $z_1, z_2$ be roots of $h$ be such that $|z_1|=|z_2|=v\in\mathbb{R}^+$. By Corollary 1.5 in \cite{barrera2024egervary}, we have that $h$ is \textit{Egerváry equivalent} to a harmonic trinomial with real coefficients. Moreover, by Corollary 1.4(ii) in \cite{barrera2024egervary}, the relation $m\alpha+(n+m)\beta-(n+2m)\gamma\equiv 0\pmod{\pi}$ holds. Recall that we assumed $\alpha=0$, hence $\frac{(n+m)\beta-(n+2m)\gamma}{\pi}\in\mathbb{Z}$.
    Now, $b\in\mathcal{R}(n,m,c)$ follows from the Definition \eqref{rayos} of the union of rays, and $(U_j)^c\subset\mathcal{R}(n,m,c)$ follows from the definition of $U_j$.

    On the other hand, let $b=\lambda\cdot e^{i\left(\frac{(n+2m)\gamma+k\pi}{n+m}\right)}$, with $\lambda\in\mathbb{R}^+$. Then $\arg(b):=\beta=\frac{(n+2m)\gamma+k\pi}{n+m}$ for some $0\leq k\leq 2(n+m)-1$.  
Hence, $(n+m)\beta-(n+2m)\gamma=k\pi$ for some $0\leq k\leq 2(n+m)-1$.
\end{proof}

\begin{example}
\label{EjemArmPtob}
    For $h = z^{2} - 2\,\overline{z} + 1$, the roots $-1+2i$ and $-1-2i$ have the same modulus. Observe that 
    $
    \frac{(n+m)\beta - (n+2m)\gamma}{\pi}
    = 2 \in \mathbb{Z}
    $
    and therefore $b = -2 \in \mathcal{R}(n,m,c)$. In particular, $(U_j^A)^c \subseteq \mathcal{R}(n,m,c)$ for $j=1$ (see Figure \ref{fig:ArmGrado2}).

\begin{figure}[h!]
\centering

\begin{minipage}{0.35\textwidth}
    \includegraphics[width=\linewidth]{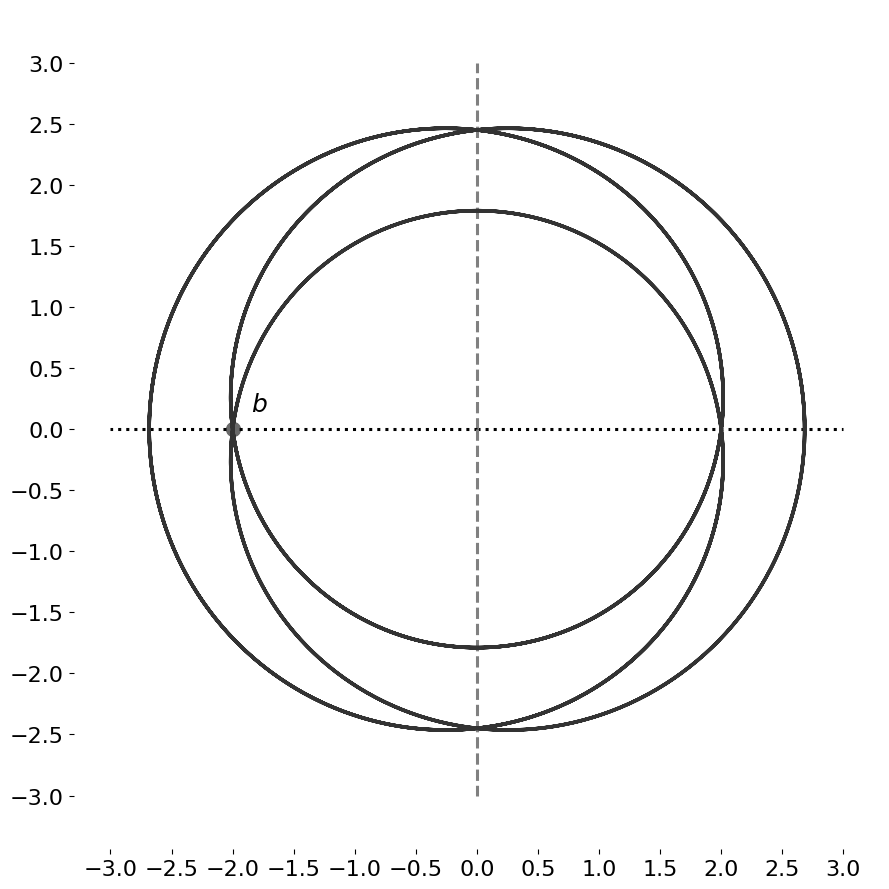}
\end{minipage}
\hfill
\begin{minipage}{0.64\textwidth}
    \caption{\small{Trochoid associated to $h=z^{2}-2\overline{z}+1$ taking a root with modulus $\sqrt{5}$. The dotted lines represent the even rays ($\mathcal{R}^{\mathrm{even}}$) and the dashed lines represent the odd rays ($\mathcal{R}^{\mathrm{odd}}$). We observe that $b=-2\in\mathcal{R}$.}}
    \label{fig:ArmGrado2}
\end{minipage}
\end{figure}

\end{example}

\begin{theorem}
\label{RaizDoble}
    Let $h(z)=z^{n+m}+b\,\overline{z}^{\,m}+c$ with $b\in\mathbb{C}$ and $c\in\mathbb{C}^*$. There exist $z_j, z_{j+1}$ roots of $h$ with $|z_j|=|z_{j+1}|=v\in\mathbb{R}^+$ if and only if $b$ is a singular point of the trochoid $h^v - b$ determined in \eqref{trochoid}. In particular:
    \begin{enumerate}
        \item $h$ has two distinct roots with the same modulus $v$ if and only if $b$ lies on a double point of a trochoid.
        \item $h$ has a root of multiplicity two with modulus $v$ if and only if $b$ is a cusp and $b=\frac{n+m}{m}v^n e^{i(n+2m)\phi}$.
        \item For $n>m$, $h$ has more than two roots with modulus $v$ if and only if $b=0$, which holds if and only if the trochoid is a rhodonea curve with a point of multiplicity $n+m$ at the origin.
    \end{enumerate}
\end{theorem}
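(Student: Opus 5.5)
The plan is to work with the parametrization from Theorem~\ref{teo:param_trocoide}. Write
\[
T(\theta) = v^n e^{i(n+2m)\theta} + \frac{|c|}{v^m} e^{i(\gamma+m\theta)}, \qquad \theta\in[0,2\pi).
\]
A point $\zeta=ve^{i\theta}$ is a root of $h$ exactly when $-b=T(\theta)$. Since $\gcd(n+2m,m)=1$, the map $\theta\mapsto T(\theta)$ is injective on $[0,2\pi)$ except at self-intersections of the curve. So the roots of $h$ of modulus $v$ correspond bijectively to the parameter values $\theta$ with $T(\theta)=-b$. Having two distinct roots of modulus $v$ therefore means that $-b$ is a multiple point of the trochoid, which proves item (1). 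I will state everything in terms of $-b$ and then transfer it to $b$ by the rotation by $\pi$ allowed in the statement ("up to rotation"). Proposition~\ref{raicesdeh} tells us that, for $b\neq 0$, at most two parameter values can map to the same point. Hence the only multiple points of the curve are double points and cusps, and together these make up the singular points.

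For item (2), I will identify a double root of modulus $v$ with a zero of the Jacobian. From \eqref{Jacobiano},
\[
J_h(\zeta)=(n+m)^2|\zeta|^{2(n+m-1)}-m^2|b|^2|\zeta|^{2(m-1)},
\]
which vanishes exactly on the circle $|\zeta|^n=\frac{m|b|}{n+m}$. Separately, I will compute $T'(\theta)$:
\[
T'(\theta)=i(n+2m)v^ne^{i(n+2m)\theta}+im\frac{|c|}{v^m}e^{i(\gamma+m\theta)}.
\]
I will then show that $T'(\theta)=0$ is equivalent to the condition that $\theta$ is a degenerate solution. The cleanest route is through the reduced analytic polynomial
\[
p(z)=z^{n+2m}+cz^m+bv^{2m}
\]
from the proof of Proposition~\ref{raicesdeh}: a double root of $p$ on $|z|=v$ is the same thing as $p(\zeta)=p'(\zeta)=0$. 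Eliminating $c$ from these two equations gives a relation between $b$ and $\zeta^{n+2m}$. Combining it with $|b|=\frac{n+m}{m}v^n$, which comes from $J_h=0$, produces the cusp location $b=\frac{n+m}{m}v^ne^{i(n+2m)\phi}$ with $\phi=\arg\zeta$, again up to the sign/rotation convention. A cusp is exactly a point where $T'=0$, so this gives the equivalence claimed in (2).

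For item (3), I will set $b=0$. Then $h=z^{n+m}+c$ has $n+m$ roots, all of modulus $|c|^{1/(n+m)}$. In that case $T(\theta)=0$ for $n+m$ values of $\theta$, so the origin is a point of multiplicity $n+m$ on the curve. The condition $T(\theta)=0$ forces $d=R-r$, that is, $|c|=v^{n+m}$. With this equality, $T(\theta)$ factors as
\[
T(\theta)=v^n e^{i(\gamma'+\frac{n+4m}{2}\theta)}\cdot 2\cos\!\left(\tfrac{n+m}{2}\theta+\text{const}\right),
\]
which is a rhodonea curve. Conversely, if $b\neq 0$, Proposition~\ref{raicesdeh} bounds the number of roots of modulus $v$ by two. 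So more than two roots forces $b=0$; the hypothesis $n>m$ is only needed to guarantee $n+m>2$ and to handle the rose normalization.

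The main obstacle I expect is the singular-point classification in item (2). There I must show that vanishing of $T'$ corresponds exactly to a root of multiplicity two, in the sense of $J_h=0$, and I must pin down the explicit cusp location; doing this requires carefully reconciling the $\pm b$ and rotation conventions.
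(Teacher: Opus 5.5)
Items (1) and (3) follow the paper's argument. There is one slip in (3): with $|c|=v^{n+m}$ the factorization is $T(\theta)=2v^n\cos\bigl(\tfrac{(n+m)\theta-\gamma}{2}\bigr)e^{i((n+3m)\theta+\gamma)/2}$, with $n+3m$, not $n+4m$.

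\textbf{The genuine gap is item (2).} You read ``root of multiplicity two'' as $J_h(\zeta)=0$ and plan to prove that this is equivalent to $T'(\theta)=0$. That equivalence is false. At a root $\zeta=ve^{i\theta}$, $J_h(\zeta)=0$ is one real condition, $|b|=\frac{n+m}{m}v^n$. By contrast, $T'(\theta)=0$ imposes two: $|c|=\frac{n+2m}{m}v^{n+m}$ and $(n+m)\theta\equiv\gamma+\pi\pmod{2\pi}$.

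Concretely, take $n=m=1$, $v=1$, $c=2$, pick $\theta$ with $\cos 2\theta=-\tfrac14$, and set $b=-(e^{3i\theta}+2e^{i\theta})$.
\begin{itemize}
\item $|b|^2=5+4\cos2\theta=4$, so $h(z)=z^2+b\overline{z}+2$ has the root $e^{i\theta}$ with $J_h(e^{i\theta})=4-|b|^2=0$.
\item $T'(\theta)=i(3e^{3i\theta}+2e^{i\theta})$ never vanishes, so this trochoid has no cusps at all.
\item $-b$ is not a double point either, since the only double points of $e^{3i\theta}+2e^{i\theta}$ are $\pm i\sqrt3$.
\end{itemize}
So under your notion of multiplicity even the headline equivalence fails, and reconciling the $\pm b$ conventions will not repair this step. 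Also, $J_h$ does no real work in your elimination step. Eliminating $c$ from $p(\zeta)=p'(\zeta)=0$ already yields $bv^{2m}=\frac{n+m}{m}\zeta^{n+2m}$, which is the full cusp location. The relation $|b|=\frac{n+m}{m}v^n$ is a consequence of this and cannot be run backwards.

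The fix is to use the notion the paper's proof actually uses: $\zeta=ve^{i\phi}$ is a double root when both $h(ve^{i\phi})$ and $\partial_\phi h(ve^{i\phi})$ vanish. We have
\[
h(ve^{i\phi})=v^me^{-im\phi}\bigl(T(\phi)+b\bigr),\qquad p(ve^{i\phi})=v^{2m}\bigl(T(\phi)+b\bigr),\qquad \frac{d}{d\phi}p(ve^{i\phi})=i\zeta\,p'(\zeta).
\]
Hence this condition is equivalent both to $T(\phi)=-b$, $T'(\phi)=0$ (a cusp of $T$ at $-b$) and to $p(\zeta)=p'(\zeta)=0$. Your elimination of $c$ then gives $b=\frac{n+m}{m}v^ne^{i(n+2m)\phi}$ directly. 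With this definition item (2) is essentially immediate. The Jacobian enters only as the one-way remark that such a root lies on the critical circle $|\zeta|^n=\frac{m|b|}{n+m}$, not as a characterization of multiplicity.
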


\begin{remark}
    The following proof is an adaptation of the proof of Theorem 4.5 in \cite{theobald2016norms}, now formulated for harmonic trinomials and employing a standard differential–geometric argument to derive a system of equations.
\end{remark}

\begin{proof}
    \begin{enumerate}
        \item Let $z_j$ and $z_{j+1}$ be two distinct roots of $h$ with the same modulus $v \in \mathbb{R}^+$. The condition $z_j \neq z_{j+1}$ is equivalent to the existence of two distinct angles 
$\phi, \varphi \in [0,2\pi)$, with $\phi \neq \varphi$ and $h^v(\phi)=h^v(\varphi)=0$, where
        \begin{equation*}
            h^v(\phi)=b+v^n e^{i(n+2m)\phi}+|c|v^{-m}e^{i(\gamma+m\phi)}=0,\qquad \text{for some }\phi\in[0,2\pi).
        \end{equation*}

        Thus $h^v(\phi)-b=h^v(\varphi)-b=-b$. This happens if and only if the trochoid attains the value of the parameter $-b$ twice. That is, $-b$ lies on a double point of the trochoid.

        \item
        We now suppose that $z_j = z_{j+1}$, that is, $h$ has a root of multiplicity $2$. We consider the limit of a family of harmonic trinomials given by $\phi \to \varphi$ and, consequently, $z_j \to z_{j+1}$.
 This implies that the intersection point of two smooth branches of the curve $h^v(\phi)-b$ degenerates into a cusp.

        Now, if the trochoid $h^v(\phi)-b$ has a cusp, then
        \begin{equation}
        \label{ec1}
            h^v(\phi)=b+v^n e^{i(n+2m)\phi}+|c|v^{-m}e^{i(\gamma+m\phi)}=0,\quad\text{ and }
        \end{equation}
        \begin{equation}
        \label{ec2}
               \frac{\partial}{\partial\phi}h^v(\phi) = v^ni(n+2m)e^{i(n+2m)\phi}
               +|c|v^{-m}im e^{i(\gamma+m\phi)} = 0.
        \end{equation}

        Solving the system given by \eqref{ec1} and 
        \eqref{ec2} it follows that $b=\frac{n+m}{m}\, v^n e^{i(n+2m)\phi},$ and hence
        \begin{equation*}
            |b|e^{i\beta}=\frac{n+m}{m}v^n e^{i(n+2m)\phi}.
        \end{equation*}
        Therefore $|b|=\frac{n+m}{m}v^n$ and $\beta\equiv(n+2m)\phi\pmod{\pi}$, so that $\phi=\frac{\beta}{n+2m}$.
        
We observe that, for $\zeta=ve^{i\phi}$ a root of $h$, we have $h(\zeta)=v^{n+m}e^{i(n+m)\phi}+b e^{-im\phi}+|c|e^{i\gamma}=0,$ thus $ \frac{\partial}{\partial\phi}h(\zeta)= 0.$ 
        That is, $h$ vanishes at $\zeta=ve^{i\phi}$ and the derivative of $h$ with respect to $\phi$ is also zero at $ve^{i\phi}$. Hence, $h$ has a root of multiplicity $2$ and modulus $v$.
        \item
        We suppose that $h$ has more than two roots of modulus $v$. By Proposition \ref{raicesdeh}, this implies that $b=0$ or $c=0$, but by hypothesis $c\in\mathbb{C}^*$, so we must have $b=0$. If $b=0$, then $h(z)=z^{n+m}+c$. This means that the trochoid satisfies the features of a trinomial equation of the form $f(z)=z^{n+m}+bz^m+c$ with $b=0$ (see Theorems 4.1 and 4.2 in \cite{theobald2016norms}).
    \end{enumerate}
\end{proof}

\begin{example}
        Let $h(z)=z^{n+m}+b\,\overline{z}\,^m+c$ with $b\in\mathbb{C}$ and $c\in\mathbb{C}^*$; we observe that:
        \begin{enumerate}
            \item For $h(z)=z^2-2\overline{z}+1$, as in Example \eqref{EjemArmPtob}, $-1+2i$ and $-1-2i$ are two distinct roots with identical modulus $v=\sqrt{5}$, and $b$ lies on a double point of a trochoid (see Figure \eqref{fig:ArmGrado2}).
            \item For $h(z)=z^2+\frac{2\sqrt{3}}{3}\overline{z}-1$, $\frac{\sqrt{3}}{3}$ is a double root with the same modulus, and $b$ is a cusp on the associated trochoid (see Figure \ref{fig:ArmCuspide}).

\begin{figure}[h!]
\centering

\begin{minipage}{0.35\textwidth}
    \includegraphics[width=\linewidth]{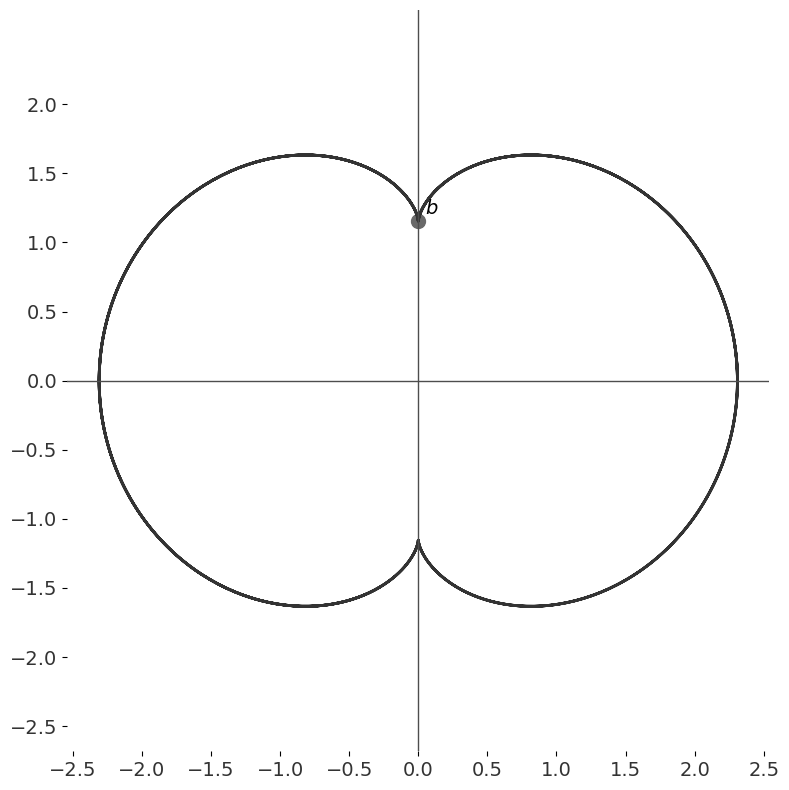}
\end{minipage}
\hfill
\begin{minipage}{0.64\textwidth}
    \caption{\small{Trochoid associated to $h=z^{2}+\frac{2\sqrt{3}}{3}\,\overline{z}-1$ having a double root of modulus $\frac{\sqrt{3}}{3}$.}}
    \label{fig:ArmCuspide}
\end{minipage}

\end{figure}
            
\item If $h=z^3+1$, that is, taking $b=0$, the roots are $z_1=\frac{1}{2}-\frac{\sqrt{3}}{2}i$, $z_2=\frac{1}{2}+\frac{\sqrt{3}}{2}i$, and $z_3=-1$. In other words, $h$ has three roots of modulus $v=1$.
        \end{enumerate} 
    \end{example}

\begin{lemma}
\label{OrderingRoots}
    Let $h(z)=z^{n+m}+b\overline{z}\,^m+c$, with $b,c\in\mathbb{R^*}$, and let $Z(h)=\{z_1,z_2,\ldots,z_{k}\}$ be the complete set of roots of $h$ such that $|z_1|\leq|z_2|\leq\cdots\leq|z_{k}|$, where $k$ denotes the last root of $h$ in the given order and satisfies $k\leq n+3m$. Let $\zeta\in\mathbb{C^*}$ be a root of $h(z)$. Then $-\zeta$ is a root of $h(-z)$. Moreover, the order of the moduli is preserved under the change of variable $-z$.
\end{lemma}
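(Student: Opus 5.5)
Let $g(z):=h(-z)$. The statement has two parts: that $-\zeta$ is a root of $g$ whenever $\zeta$ is a root of $h$, and that the ordering of moduli is preserved when we pass from $h$ to $g$. The plan is to prove both by direct substitution, with no appeal to Bohl or Egerváry theory.

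\textbf{Step 1: roots correspond.} For every $w\in\mathbb{C}$ we have $g(w)=h(-w)$, so
\[
g(-\zeta)=h\bigl(-(-\zeta)\bigr)=h(\zeta)=0 .
\]
Hence $-\zeta$ is a root of $g$. The same identity read backwards shows that $\eta$ is a root of $g$ if and only if $-\eta$ is a root of $h$. So $w\mapsto -w$ is a bijection from $Z(h)$ onto $Z(g)$, and in particular both sets have the same cardinality $k\le n+3m$.

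\textbf{Step 2: $g$ is again a harmonic trinomial of the same type.} Expanding,
\[
g(z)=(-1)^{n+m}z^{n+m}+(-1)^m b\,\overline{z}\,^m+c .
\]
If one wants the leading coefficient normalised to $1$, multiply by $(-1)^{n+m}$; this does not change the zero set. The coefficients stay real and nonzero, so $g$ lies in the same family as $h$ and the notation $Z(g)$, ordered by modulus, makes sense.

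\textbf{Step 3: ordering is preserved.} The map $w\mapsto -w$ is an isometry fixing the origin, so $|-z_i|=|z_i|$ for every $i$. Therefore
\[
|-z_1|\le|-z_2|\le\cdots\le|-z_k|
\]
is an ordering of $Z(g)$ by modulus, and the $j$-th root of $h$ corresponds to the $j$-th root of $g$. Ties are also preserved: $|z_i|=|z_{i+1}|$ holds exactly when $|-z_i|=|-z_{i+1}|$.

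\textbf{Main obstacle.} Nothing here is deep. The only care needed is the interpretation of "the order is preserved" when several roots share a modulus. I would phrase the conclusion in terms of the sorted sequence of moduli, which is identical for $h$ and $g$, rather than in terms of a particular labelling of the roots. The reality of $b$ and $c$ is used only to keep $g$ in the same real-coefficient family.
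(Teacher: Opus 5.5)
Your proof is correct and is essentially the paper's argument: both rest on the identity $g(-\zeta)=h(\zeta)=0$ for $g(z)=h(-z)$, together with $|-\zeta|=|\zeta|$. The paper additionally calls the relation $g(z)=h(e^{i\pi}z)$ an Egerv\'ary equivalence (with $k=1$, $\delta=\pi$), but that label does no real work, so your direct substitution amounts to the same proof.
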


\begin{remark}
    The previous lemma provides a result for harmonic trinomials obtained through a change of variable, which has no analogue for the non-harmonic trinomials presented in \cite{theobald2016norms}.
\end{remark}

\begin{proof}
    We observe that $h(z)=z^{n+m}+b\overline{z}\,^m+c$, with $b,c\in\mathbb{R^*}$, and $h(-z)=(-1)^{n+m}z^{n+m}+(-1)^mb\overline{z}\,^m+c:=g(z)$ are Egervary equivalent. Indeed, $g(z)=kh(e^{i\delta}z)$ for $k=1$, $\delta=\pi$, and for every $z\in\mathbb{C}$. 

    Let $\zeta\in\mathbb{C}^*$ a root of $h(z)$. Since $h(\zeta)=0$, it follows that $g(-\zeta)=h(\zeta)=0$; consequently, $-\zeta$ is a root of $g(z)$. In particular, since $|\zeta|=|-\zeta|$, the order of the moduli is preserved. Moreover, the identity $g(z)=h(e^{i\pi}z)$ can be interpreted geometrically as stating that the roots of $h(z)$ and $g(z)=h(-z)$ differ only by a rotation, but not in their ordering.
\end{proof}

The following result is a reformulation of Theorem 4.8 in \cite{theobald2016norms}, now for harmonic trinomials, whose proof considers the polynomials presented in \eqref{polinomios}, which were not addressed for non-harmonic trinomials.

\begin{theorem}
\label{RaicesReales}
    Let $h(z)=z^{n+m}+b\overline{z}\,^m+c$, with $b,c\in\mathbb{R^*}$, and let $Z(h)$ as in Lemma \ref{OrderingRoots}. We suppose that there exists $\lambda\in\mathbb{R^*}$ root of $h$. Then $\lambda=z_1$, or $\lambda=z_{m-1}$, or $\lambda=z_m$, or $\lambda=z_{m+1}$, or $\lambda=z_{n+m-1}$. 

    Moreover, if $\lambda=z_m$, then the interval $(|z_{m}|,|z_{m+1}|)$ lies outside the region determined by the triangles whose side lengths are $\lambda^{n+m}$, $|b|\,\lambda^m$, and $|c|$. In particular:
    \begin{itemize}
        \item if $\lambda=z_{m-1}=\mathfrak{b_1}$ is a root of $h$, then $z_{m}=\mathfrak{b_2}$,
        \item if $\lambda=z_{m}=\mathfrak{b_1}$ is a root of $h$, then $z_{m+1}=\mathfrak{b_2}$,
        \item if $\lambda=z_{m}=\mathfrak{b}$ is a root of $h$, then $z_{m+1}=\mathfrak{b}$. That is, $\lambda=\mathfrak{b}$ is a double root of $h$.
    \end{itemize}
\end{theorem}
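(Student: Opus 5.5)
Fix a real root $\lambda\neq0$ of $h$ and set $v=|\lambda|$. Since $b$, $c$ and $\lambda$ are real, $\overline{\lambda}=\lambda$, so $h(\lambda)=0$ reads
\[
\lambda^{n+m}+b\lambda^m+c=0 .
\]
This is an identity among three real numbers with absolute values $v^{n+m}$, $|b|v^m$ and $|c|$. A vanishing sum of three collinear real terms forces one absolute value to equal the sum of the other two. So the lengths $|a|v^{n+m},|b|v^m,|c|$ (with $a=1$) form a \emph{degenerate} triangle, and $v$ is one of $\mathfrak{c}$, $\mathfrak{a}$, $\mathfrak{b}_1$, $\mathfrak{b}_2$ (or $\mathfrak{b}$). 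These are exactly the positive roots of $C$, $A$ and $B$ in \eqref{polinomios}. The plan is to decide which of the three equalities holds and then read off the position of $\lambda$ from Theorem~\ref{BohlArmonico}.

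\textbf{Case by case.}

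If $|c|=v^{n+m}+|b|v^m$, then $v=\mathfrak{c}$. By Theorem~\ref{BohlArmonico}(i) there are no roots of modulus $<\mathfrak{c}$, so $\lambda=z_1$.

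If $v^{n+m}=|b|v^m+|c|$, then $v=\mathfrak{a}$. Beyond $\mathfrak{a}$ the count is $n+3m$ by Theorem~\ref{Bohlarm1}, and we must locate $\lambda$ in the ordering at the endpoint of the triangle region. Here I would use the continuous extension $w_*(\mathfrak{a})=\frac{n+m}{2}$ in the counting formula and evaluate $\operatorname{Card}(Z_h(v))$ as $v\uparrow\mathfrak{a}$. Since $\alpha=0$ and $\beta,\gamma\in\{0,\pi\}$, the pivot $P_*$ is a half-integer or an integer. Counting the integers in $P_*\pm w_*(0,v)$ should give the index $n+m-1$ claimed in the statement.

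If $|b|v^m=v^{n+m}+|c|$, then $v\in\{\mathfrak{b}_1,\mathfrak{b}_2\}$. Theorem~\ref{BohlArmonico}(ii) gives at most $m$ roots of modulus $<\mathfrak{b}_1$ and at least $m$ roots of modulus $>\mathfrak{b}_2$ in the complement count. This pins $\lambda$ to one of $z_{m-1},z_m,z_{m+1}$. The three bullets then follow from the sub-cases:
\begin{itemize}
\item if $\lambda=\mathfrak{b}_1$, the next root in the ordering sits at $\mathfrak{b}_2$;
\item if $\mathfrak{b}_1=\mathfrak{b}_2=\mathfrak{b}$, then $B$ has a double root at $\mathfrak{b}$, i.e.\ $B(\mathfrak{b})=B'(\mathfrak{b})=0$, and this matches the cusp condition of Theorem~\ref{RaizDoble}(2) with $|b|=\frac{n+m}{m}\mathfrak{b}^n$. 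Hence $\lambda$ is a double root and $z_m=z_{m+1}=\mathfrak{b}$.
\end{itemize}

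For the claim about the interval $(|z_m|,|z_{m+1}|)$: on $(\mathfrak{b}_1,\mathfrak{b}_2)$ the inequality $B(v)>0$ holds, so there is no triangle there. Consequently no root has modulus in that open interval, which places it outside the triangle region $\mathbb{T}$.

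Throughout, I would use Lemma~\ref{OrderingRoots} to reduce the sign choices. Replacing $h(z)$ by $h(-z)$ lets us assume $\lambda>0$ without altering the ordering, so only the sign pattern of $(b,c)$ remains to be tracked.

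\textbf{Main obstacle.} The hardest step is the exact index bookkeeping at the degenerate endpoints $\mathfrak{a}$ and $\mathfrak{b}_i$. There the counting formula of Theorem~\ref{BohlArmonico} is stated only for non-degenerate $v$, so the count of roots of modulus strictly less than $v$ has to come from one-sided limits and the extended $w_*$. In particular it must be shown carefully that the real root at $\mathfrak{a}$ has index $n+m-1$, and that at $\mathfrak{b}_1$ it has index $m-1$ or $m$, depending on the parity data $(n+m)\beta-(n+2m)\gamma\pmod{2\pi}$. I would first work out $P_*$ explicitly for the four sign patterns of $(b,c)$, and check each case against small examples such as $z^3-\overline{z}+c$.
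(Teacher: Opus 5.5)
Your strategy is the paper's. A real root makes the triangle with sides $|\lambda|^{n+m}$, $|b||\lambda|^m$, $|c|$ degenerate, so $|\lambda|\in\{\mathfrak{c},\mathfrak{a},\mathfrak{b}_1,\mathfrak{b}_2\}$, and the position is then read off from Bohl's count. The paper splits by the signs of $b,c$ with $\lambda>0$, which is the same split.

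\textbf{The gap at $\mathfrak{a}$.} The step you defer, that at $\mathfrak{a}$ the count ``should give the index $n+m-1$'', cannot be carried out, because that part of the statement is false.
\begin{itemize}
\item For $h(z)=z^2-3\overline{z}-1$ ($n=m=1$) the roots are $\frac{3\pm\sqrt{13}}{2}$ and $-\frac32\pm\frac{\sqrt{23}}{2}i$, of moduli $\approx 0.30,\,2.83,\,2.83,\,3.30$. So the real root $\lambda=\mathfrak{a}=\frac{3+\sqrt{13}}{2}$ is $z_4$, while the listed positions $1,m-1,m,m+1,n+m-1$ never exceed $2$.
\item Even without a $B$-annulus there is an off-by-one. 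For $h(z)=z^3-\overline{z}-1$ ($n=2$, $m=1$) the roots are $\lambda=\mathfrak{a}\approx1.325$ (the real root of $x^3-x-1$) and $x_0\pm i\sqrt{3x_0^2+1}$ with $8x_0^3+4x_0+1=0$, of modulus $\approx1.098$. Hence $\lambda=z_3=z_{n+m}\notin\{z_1,z_2\}$.
\end{itemize}
Without a $B$-annulus the Bohl count does give $\text{Card}(Z_h(\mathfrak{a}))=n+m-1$. But this counts roots of modulus \emph{strictly less} than $\mathfrak{a}$, so it yields index $n+m$. The paper's proof makes the same identification (``the number of integers \ldots is $n+m-1$, and therefore $j=n+m-1$''), so it offers no way around this.

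What is true, and elementary, is that $\mathfrak{a}$ is always the \emph{last} root. For $|z|>\mathfrak{a}$ one has $|z|^{n+m}>|b||z|^m+|c|\ge|b\overline{z}^{\,m}+c|$. On $|z|=\mathfrak{a}$, equality in the triangle inequality together with $\gcd(n,m)=1$ forces $z=\mathfrak{a}$. However, the total number of roots depends on $b,c$: it is $n+3m$ in the first example and $n+m$ in the second. So your appeal to ``the count beyond $\mathfrak{a}$ is $n+3m$'' (Theorem~\ref{Bohlarm1}) is not reliable either.

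\textbf{The $\mathfrak{b}$-case.} The inequalities of Theorem~\ref{BohlArmonico}(ii) do not by themselves pin $\lambda$ to $z_{m-1},z_m,z_{m+1}$. You also never exclude other roots on the circle $|z|=|\lambda|$, which is needed for ``$\lambda=z_j$'' to be meaningful. A clean argument runs as follows.
\begin{itemize}
\item For $v\in(\mathfrak{b}_1,\mathfrak{b}_2)$ there are exactly $m$ roots in $|z|<v$ (Theorem~\ref{Bohlarm1}), and none with $\mathfrak{b}_1<|z|<\mathfrak{b}_2$.
\item The same equality-plus-coprimality argument shows that $\mathfrak{b}_1$ and $\mathfrak{b}_2$ are the only roots on their circles.
\end{itemize}
Hence $\mathfrak{b}_1=z_m$ and $\mathfrak{b}_2=z_{m+1}$ whenever $\mathfrak{b}_1<\mathfrak{b}_2$, as in $z^5-6\overline{z}^{\,2}+1$. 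The parity-dependent alternative $z_{m-1}$ you anticipate never occurs.

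\textbf{The ``moreover'' clause.} Your argument tacitly assumes that $\lambda=z_m$ forces $\lambda=\mathfrak{b}_1$, and for $m=1$ this fails. Take $h(z)=z^3+\overline{z}-1$: it has $\lambda=\mathfrak{c}\approx0.682=z_1=z_m$, and its other two roots have modulus $\sqrt{(1+\sqrt5)/2}\approx1.272$. Then $(|z_1|,|z_2|)$ lies inside $\mathbb{T}=(\mathfrak{c},\mathfrak{a})$, where $\mathfrak{a}\approx1.325$.

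So the case analysis is the right tool, but the statement has to be corrected before any version of your proof can close. For example: $\lambda\in\{z_1,z_m,z_{m+1},z_k\}$ with $z_k$ the last root as in Lemma~\ref{OrderingRoots}, and the ``moreover'' restricted to $\lambda=\mathfrak{b}_1$.
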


\begin{proof}
    Let $\lambda=z_j\in\mathbb{R}^+$ be a root of $h$.  
    In order for $h(\lambda)$ to vanish, one of the following sign configurations must occur:
    \begin{enumerate}
        \item \textbf{First case:} $b>0$ and $c<0$.  
    Let $C(\lambda)=-\lambda^{\,n+m}-|b|r^m+|c|=0$. Then $|c|=\lambda^{\,n+m}+|b|\lambda^m$, and consequently $-c=\lambda^{\,n+m}+b\lambda^m$.  
    By Theorem~2.8 in \cite{barrera2022number}, it follows that $\lambda=\mathfrak{c}$.  
    Since $h$ has no roots of modulus smaller than $\lambda=\mathfrak{c}$, we conclude that $\lambda=z_1=\mathfrak{c}$.
        \item \textbf{Second case:} $b<0$ and $c>0$.  
    Let $B(\lambda)=-\lambda^{\,n+m}+|b|\lambda^m-|c|=0$. Then $|b|\lambda^m=\lambda^{\,n+m}+|c|$, and hence $-b\lambda^m=\lambda^{\,n+m}+c$.  
    By Theorem~2.8 in \cite{barrera2022number}, one has $\lambda\in\{\mathfrak{b_1},\mathfrak{b_2}\}$ or $\lambda=\mathfrak{b}$.

    \begin{itemize}
        \item If $\lambda=\mathfrak{b_1}$, then $w_1=0$ and $w_2=\pi$, whence $w_*(\mathfrak{b_1})=-m/2$.  
        The interval $(P_*+w_*(0,\mathfrak{b_1}),\, P_*-w_*(0,\mathfrak{b_1}))$ has length $m$.  
        Since $P_*-w_*(\mathfrak{b_1})$ is an integer equal to $0$, it follows that $P_*+w_*(\mathfrak{b_1})$ is also an integer.  
        Thus, the number of integers contained in this interval is $m-1$ if $P_*$ is not an integer, and $m$ if $P_*$ is an integer.  
        Therefore $j=m-1$, i.e. $\lambda=z_{m-1}=\mathfrak{b_1}$, or $j=m$, i.e. $\lambda=z_m=\mathfrak{b_1}$.

        Furthermore, $z_m=\mathfrak{b_2}$, since $z_m<\mathfrak{b_2}$ would imply $ \mathfrak{b_1}=z_{m-1}<z_m<\mathfrak{b_2},$ which is impossible because this interval lies outside the triangle region.  
        The same argument shows that $z_{m+1}=\mathfrak{b_2}$: if $z_{m+1}<\mathfrak{b_2}$, then $\mathfrak{b_1}=z_m<z_{m+1}<\mathfrak{b_2}$, again contradicting that this interval lies outside the triangle region.

        \item If $\lambda=\mathfrak{b}$, then by the Bohl theorem for harmonic trinomials in \cite{barrera2022number}, it has to $j=m$, that is, $\lambda=z_m=\mathfrak{b}$.  
        Since at $\lambda=\mathfrak{b}$ the function $B(\lambda)=-\lambda^{\,n+m}+|b|\lambda^m-|c|$ has a tangency point with the $x$-axis, we conclude that $\lambda=\mathfrak{b}$ is a double root of $B$.  
        Because $h(\lambda)=-B(\lambda)$, it follows that $\lambda=\mathfrak{b}$ is also a double root of $h$.
    \end{itemize}
        \item \textbf{Third case:} $b<0$ and $c<0$.  
    Let $A(\lambda)=\lambda^{\,n+m}-|b|\lambda^m-|c|=0$. Then $\lambda^{\,n+m}=|b|\lambda^{m}+|c|$, and therefore $\lambda^{\,n+m}=-b\lambda^m-c$.  
    By Theorem~2.8 in \cite{barrera2022number}, we obtain $\lambda=\mathfrak{a}$. If $\lambda=\mathfrak{a}$, then $w_1=\pi$ and $w_2=0$, whence $w_*(\mathfrak{a})=(n+m)/2$.  
    The interval $(P_*+w_*(0,\mathfrak{a}),\, P_*-w_*(0,\mathfrak{a}))$ has length $n+m$.  
    Since $P_*-w_*(\mathfrak{a})$ is an integer, $P_*+w_*(\mathfrak{a})$ must also be an integer.  
    Thus, the number of integers contained in the interval is $n+m-1$, and therefore $j=n+m-1$, i.e. $\lambda=z_{n+m-1}=\mathfrak{a}$.
    \end{enumerate}
\end{proof}

\begin{example}
    Let $h=z^5-6\,\overline{z}^{\,2}+1$, the real roots are 
    $z_1\approx -0.405991$, $z_2=\mathfrak{b_1}\approx 0.410624$, and $z_3=\mathfrak{b_2}\approx 1.784863$. For $h=z^3-\frac{3}{2}\,\overline{z}^{\,2}+\frac{1}{2}$, the real roots are $z_2=z_3=\mathfrak{b}=1$ (a double root).
\end{example}

We now present a result that is not mentioned in \cite{theobald2016norms} but is necessary for subsequent results in the context of harmonic trinomials.

\begin{lemma}
\label{reescalamiento}
    Let $h(z)=z^{n+m}+b\,\overline{z}\,^m+c$ with $b\in\mathbb{C}$ and $c\in\mathbb{C}^*$.  
    Then the moduli of the roots of $h$ can be rescaled in such a way that their ordering is preserved.
\end{lemma}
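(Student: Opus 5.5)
We prove the lemma with an explicit scaling $z=t w$, $t>0$, which multiplies all moduli by the same factor and therefore cannot change their order. The harmonic trinomial equation is then renormalized so that it again has the form of the statement.

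Fix $t\in\mathbb{R}^+$ and put $z=tw$. Since $\overline{z}=t\overline{w}$, we get
\[
h(tw)=t^{n+m}w^{n+m}+b\,t^m\overline{w}\,^m+c .
\]
Dividing by $t^{n+m}\neq 0$ gives the equivalent equation
\[
h_t(w):=w^{n+m}+b\,t^{-n}\,\overline{w}\,^m+c\,t^{-(n+m)}=0 .
\]
This is again a harmonic trinomial of the form considered in the statement, with coefficients $b_t=b\,t^{-n}\in\mathbb{C}$ and $c_t=c\,t^{-(n+m)}\in\mathbb{C}^*$. The map $\zeta\mapsto \zeta/t$ is a bijection from the zero set of $h$ onto the zero set of $h_t$, since $h_t(\zeta/t)=t^{-(n+m)}h(\zeta)$. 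It also preserves multiplicities. The Jacobian of $h$ at $\zeta$ equals $t^{2(n+m)-2}$ times the Jacobian of $h_t$ at $\zeta/t$, up to the positive factor coming from the chain rule, so the sign and vanishing of the Jacobian, and hence the notion of order recalled in the preliminaries, are preserved.

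Now order the roots of $h$ as $|z_1|\le\cdots\le|z_k|$, as in Lemma~\ref{OrderingRoots}. Their images satisfy $|z_j/t|=|z_j|/t$. Since $x\mapsto x/t$ is strictly increasing on $\mathbb{R}^+$, both the ordering and all equalities between moduli are preserved. In particular, roots of equal norm $v$ go to roots of equal norm $v/t$, so membership in each $U_j$ is unchanged: $h\in U_j$ if and only if $h_t\in U_j$. Choosing $t=v$ for a prescribed modulus $v$ normalizes a root of norm $v$ to norm $1$. This is the normalization one expects to use later, and it is consistent with the scaling of the trochoid parameters in Theorem~\ref{teo:param_trocoide}: $R,r\propto v^n$ and $d=|c|v^{-m}$.

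The only point needing care is whether ``rescaled'' requires staying within the family with the same $c$. Rescaling necessarily changes $c$ to $c\,t^{-(n+m)}$, so the lemma should be read as holding within the whole space $H$. If instead one wants $|c|$ fixed, the claim only holds after composing with a rotation $w\mapsto e^{i\delta}w$ (the Egerváry equivalence of Theorem~\ref{teo:Egervaryarm1}), which adjusts only arguments and does not remove the need for the scaling. I would state this explicitly. The rest is the routine verification above.
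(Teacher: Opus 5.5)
Your proof is correct and is essentially the paper's argument: the paper makes the specific choice $t=|c|^{1/(n+m)}$ in your substitution $z=tw$, obtaining $w^{n+m}+b\,|c|^{-n/(n+m)}\,\overline{w}\,^m+c/|c|=0$, and observes that dividing all moduli by one positive constant preserves their order (this is the choice that later justifies assuming $|c|=1$ in Theorem~\ref{teo_complementos}). Your extra checks (equal moduli stay equal, $J_h(tw)=t^{2(n+m)-2}J_{h_t}(w)$, multiplicities) go beyond what the paper records, but your closing remark is garbled: no scaling with $t\neq 1$ keeps $|c|$ fixed, and the rotation (Egerv\'ary equivalence) is used only afterwards, to pass from $|c|=1$ to $c=1$.
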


\begin{proof}
    Let $h(z)=z^{\,n+m}+b\,\overline{z}\,^{\,m}+c=0$ with $b\in\mathbb{C}$ and $c\in\mathbb{C}^*$.  
    We write $c=|c|e^{i\theta}$ and define $\zeta=\frac{z}{|c|^{1/(n+m)}}$. Then $h(\zeta)=0$ if and only if $\zeta^{\,n+m}+\frac{b}{|c|^{\,n/(n+m)}}\,\overline{\zeta}\,^{\,m}+e^{i\theta}=0$,
    which is again a harmonic trinomial.  

    Moreover, if $z_1, z_2, \ldots, z_k \in \mathbb{C}^*$ denote the roots of $h$ ordered so that 
    $|z_1|\leq|z_2|\leq \cdots \leq |z_k|$, where $z_k$ represents the last root in this ordering,  
    the above change of variables induces a rescaling of the moduli of the roots while preserving their ordering.
\end{proof}

\begin{theorem}
\label{teo_complementos}
For a fixed $c\in\mathbb{C}^*$ and consider the parametric family of harmonic trinomials
$h_b(z)=z^{n+m}+b\,\overline{z}\,^m+c$, with $b\in\mathbb{C}$. For $j\in\{1,\dots,k-1\}\setminus\{m\}$, the following holds:
\begin{equation*}
    \label{conditions_Uj_par_impar}
    \begin{aligned}
        &\text{If $n+j$ is even, then } h_b\in U_j \text{ if and only if } b\notin \mathcal{R}^{\text{even}}.\\[1mm]
        &\text{If $n+j$ is odd, then } h_b\in U_j \text{ if and only if } b\notin \mathcal{R}^{\text{odd}}.
    \end{aligned}
\end{equation*}
    
    In particular, the set $\{b\in\mathbb{C}^*: h_b\in U_j\}$ is disconnected.  
    This property remains valid for the set $\{b\in\mathbb{C}: h_b\in U_j\}$ when the sets $U_j$ are considered in $\mathbb{C}\times\mathbb{C}^*$.  

    For $U_m$, the conditions for $n+j$ even or odd also hold, with the additional modification that $h_b\in U_m$ whenever there exists $\lambda\in\mathbb{R}^+$ such that every point $h_b(\lambda)\in\mathbb{R}$ in the interval $(|z_{m}|,|z_{m+1}|)$ lies outside the triangle region.
\end{theorem}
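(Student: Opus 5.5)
The plan is to mirror the proof of Theorem 4.9 in \cite{theobald2016norms}. The harmonic ingredients are Theorem~\ref{teo:union_rayos}, the Egerváry equivalence of Theorem~\ref{teo:Egervaryarm1}, and the real-root ordering of Theorem~\ref{RaicesReales}.

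\textbf{Step 1: only ray parameters can fail.} By Theorem~\ref{teo:union_rayos}, if $h_b\notin U_j$, i.e.\ $|z_j|=|z_{j+1}|$, then $b\in\mathcal{R}(n,m,c)$. It remains to decide which parity of ray is responsible for a given $j$.

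\textbf{Step 2: reduce to real coefficients.} Fix $b=\lambda e^{i\theta_k}$ with $\theta_k=\frac{(n+2m)\gamma+k\pi}{n+m}$. The quantity $(n+m)\beta-(n+2m)\gamma=k\pi$ is then a multiple of $\pi$. Choose $\delta$ and a scalar so that Theorem~\ref{teo:Egervaryarm1} applies; this shows $h_b$ is Egerváry equivalent to a real trinomial $g(z)=z^{n+m}+\tilde b\,\overline{z}^{\,m}+\tilde c$ with $|\tilde b|=\lambda$ and $|\tilde c|=|c|$. The sign pattern of $(\tilde b,\tilde c)$ is governed by $k$ and by the parity of $n$. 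Concretely, rotating $z\mapsto e^{i\delta}z$ multiplies the $\overline{z}^m$ and $z^{n+m}$ terms by phases differing by $e^{i(n+2m)\delta}$. Absorbing $c$ into a positive real then leaves $\arg\tilde b\equiv k\pi \pmod{\pi}$ shifted by a multiple of $n\pi$. Lemma~\ref{reescalamiento} and Lemma~\ref{OrderingRoots} ensure that rescaling, and the substitution $z\mapsto -z$ when $b,c$ are real, preserve the ordering of moduli. Hence $h_b\in U_j$ if and only if $g\in U_j$.

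\textbf{Step 3: locate the coinciding moduli for real $g$.} For real $g$, nonreal roots come in conjugate pairs $\zeta,\overline{\zeta}$ of equal modulus. By Proposition~\ref{raicesdeh} these are the only coincidences at that modulus. By Theorem~\ref{RaicesReales}, the real roots occupy only the positions $1$, $m-1$, $m$, $m+1$ and $n+m-1$. Away from these positions, the ordered list of moduli therefore consists of consecutive equal pairs. The indices $j$ with $|z_j|=|z_{j+1}|$ thus form an arithmetic progression of step $2$. Its parity is fixed by whether one real root ($c$-case, position $1$) or an $\mathfrak{a}$-type root at position $n+m-1$ precedes the pairs, and this depends on the sign case of $(\tilde b,\tilde c)$. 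Matching with Step 2, even rays give $|z_j|=|z_{j+1}|$ exactly for $n+j$ even, and odd rays exactly for $n+j$ odd. The $b=0$ case is excluded, and the index $m$ must be treated separately. This yields both implications for $j\ne m$.

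\textbf{Step 4: disconnectedness and the index $m$.} The complement of a nonempty finite union of rays from the origin in $\mathbb{C}^*$ is a union of open sectors, hence disconnected. In $\mathbb{C}$, with $U_j$ viewed in $\mathbb{C}\times\mathbb{C}^*$, the same sector decomposition persists once the origin, where all rays meet, is removed appropriately. For $j=m$, the parity argument still shows that off-ray $b$ lie in $U_m$. On rays, however, the real roots $\mathfrak{b}_1,\mathfrak{b}_2$ may sit at positions $m,m+1$ with distinct moduli. By the second part of Theorem~\ref{RaicesReales}, this happens precisely when $(|z_m|,|z_{m+1}|)$ lies outside the triangle region, which gives the stated extra clause.

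The main obstacle is Step 3. It requires a careful count showing that, in each sign case, the pairing of conjugate roots starts at the correct parity. The count is complicated by the harmonic root total $k\le n+3m$ exceeding $n+m$, so the pairs beyond index $n+m$ must be checked too. I would first do this case by case using the Bohl count of Theorem~\ref{BohlArmonico} to pin down how many roots lie below each real root.
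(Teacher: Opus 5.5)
Your Steps 1--2 are fine, and they already depart from the paper. The paper never passes to real coefficients: it sets $\alpha=\gamma=0$, gets $P_*=(l-n-2m)/2$ on the rays, and reads the parity of $\text{Card}(Z_h(v))$ in the triangle region off $\mathbf{1}_{\{P_*\in\mathbb{Z}\}}$, i.e.\ off whether $l\equiv n\pmod 2$. One minor point: you cannot always take $\tilde c>0$; for $n+m$ even and an odd ray, the real representative has $\tilde c<0$.

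The genuine gap is Step 3, which carries the whole theorem. The inference ``away from the real-root positions the moduli come in consecutive equal pairs, so the $j$ with $|z_j|=|z_{j+1}|$ form a step-$2$ progression'' does not follow. The conjugate pairs lying strictly between two consecutive real roots fill a block of even length, so \emph{every} real root flips the parity of the equalities after it.
\begin{itemize}
\item For example, take $z^5-6\overline{z}^{\,3}+1$ ($n=2$, $m=3$, $b$ on an odd ray). The real roots are $z_3,z_4,z_{11}$, and the equalities occur at $j=1,5,7,9$. A single parity survives only because the two interior real roots are adjacent, with gap index exactly $m$.
\item The position list $1,m-1,m,m+1,n+m-1$ that you import from Theorem~\ref{RaicesReales} misses $z_{11}$ here, and it cannot yield your conclusion. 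Adjacent real roots at $m-1,m$ would give an exception at $j=m-1$. An interior real root at $n+m-1<k$ would flip the parity of all later pairs.
\item An $\mathfrak{a}$-type root can never ``precede the pairs'': since $v^{n+m}>|b|v^m+|c|$ for $v>\mathfrak a$, every root satisfies $|z|\le\mathfrak a$.
\end{itemize}
Your closing remark that the count must still be done case by case concedes exactly this.

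What your route needs is a sharper location statement, which you can prove directly.
\begin{itemize}
\item[(i)] A real root $\lambda$ of the real representative makes $\lambda^{n+m},\tilde b\lambda^m,\tilde c$ three real numbers summing to $0$. Hence $|\lambda|\in\{\mathfrak c,\mathfrak b_1,\mathfrak b_2,\mathfrak a\}$.
\item[(ii)] Each such degenerate circle carries at most one root of $h$. The two argument conditions have at most one common solution mod $2\pi$ because $\gcd(m,n+m)=1$; on the $\mathfrak c$-circle these are $(n+m)\theta\equiv\gamma+\pi$ and $\beta-m\theta\equiv\gamma+\pi$, and similarly on the other circles. By conjugation symmetry, the roots on these circles are exactly the real ones.
\item[(iii)] There are no roots in $|z|<\mathfrak c$, in $\mathfrak b_1<|z|<\mathfrak b_2$, or in $|z|>\mathfrak a$. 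By Theorem~\ref{Bohlarm1} there are exactly $m$ roots in $|z|<v$ for $v\in(\mathfrak b_1,\mathfrak b_2)$. The two $\mathfrak b$-roots occur together, since their argument condition does not depend on $v$.
\end{itemize}
Hence the real roots sit only at positions $1$, $m$, $m+1$ and $k$. For $j\le k-1$, $j\neq m$, the equalities therefore occur at exactly one parity, which is even iff $z_1$ is real (the root on the $\mathfrak c$-circle).

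Finally, with $c=1$ and $\beta=l\pi/(n+m)$, the $\mathfrak c$-circle conditions $(n+m)\theta\equiv\pi$ and $m\theta\equiv\beta-\pi$ are compatible iff $l\equiv n\pmod 2$. This is the same condition as the paper's $P_*\in\mathbb{Z}$, and it gives the even/odd matching.

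For the statement over $b\in\mathbb{C}$, you should also state explicitly that $h_0\notin U_j$, since all roots of $z^{n+m}+c$ have modulus $|c|^{1/(n+m)}$. Your phrase ``removed appropriately'' does not justify this.
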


\begin{remark}
    Theorem \ref{teo_complementos} is similar to Theorem 4.9 in \cite{theobald2016norms}; however, we present the proof using a different and more detailed strategy for harmonic trinomials.
\end{remark}

\begin{proof}
Let $z_1,z_2,\ldots,z_k\in\mathbb{C}^*$ be all the possible roots of $h_b$ ordered by their moduli, that is, $|z_1|\le |z_2|\le \cdots \le |z_k|,$
where $z_k$ is the last root in this ordering and where $k\le n+3m$.  
By Theorem~\ref{teo:union_rayos}, it is sufficient to consider the case $b\in \mathcal{R}(n,m,c)$.

By Lemma~\ref{reescalamiento}, the moduli of the roots of $h_b$ may be rescaled while preserving their order, so we may assume that $|c|=1$.  
Furthermore, when $|c|=1$, the polynomial $h_b(z)=z^{n+m}+b\,\overline{z}^{\,m}+c$ is Egerváry equivalent to another harmonic trinomial obtained by a rotation of the arguments; hence we may also assume $c=1$.

For $b=0$ all roots satisfy $z=(-1)^{1/(n+m)}$ and they have modulus $1$. Therefore, $h_{b}\notin U_j \text{ for every } j\in\{1,\ldots,k-1\}.$ Hence we may assume from now on that $b\in\mathbb{C}^*$.

Recall that the interval $(P_*+w_*(0,v)) \,\cup\, (P_*-w_*(0,v))$ is related to the number of roots of modulus $v$.  
Whenever $v^{n+m}$, $|b|v^m$, and $|c|$ form the sides of a nondegenerate triangle, we have, for each $j\in\{1,\ldots,k-1\}\setminus\{m\}$,
$$
h_b\in U_j \text{ with $j$ even } \iff P_*\notin\mathbb{Z},\text{\qquad and \qquad}h_b\in U_j \text{ with $j$ even } \iff P_*\notin\mathbb{Z}.
$$

In our case $\alpha=0$ and $\gamma=0$, so $P_*=\frac{(n+m)\beta-(n+2m)\pi}{2\pi}$. Solving for $\beta$ gives
\[
\beta=\frac{(2P_*+n+2m)\pi}{n+m}
      =\frac{l\pi}{n+m},
\]
where $l := 2P_*+n+2m\in\mathbb{Z}$.  
This corresponds precisely to the definition of a union of rays $b\in \mathcal{R}(n,m,c)$ given in \eqref{rayos}.  
Therefore, $b\in \mathcal{R}(n,m,c)$ if only if $\beta=\frac{l\pi}{n+m},\, l\in\{0,\ldots,2(n+m)-1\}$.

With this, $P_*$ becomes
\begin{equation}
\label{P_*}
    P_*=\frac{l-(n+2m)}{2}.
\end{equation}

Moreover, since $b\in \mathcal{R}(n,m,c)$ satisfies
\[
    b\in\mathcal{R}^{\mathrm{even}}
    \iff \frac{(n+m)\beta}{\pi} \ \text{is even} 
    \iff l \text{ is even,\quad and}
\]
\[
    b\in\mathcal{R}^{\mathrm{odd}}
    \iff \frac{(n+m)\beta}{\pi} \ \text{is odd}
    \iff l \text{ is odd}.
\]

From \eqref{P_*} the following cases occur:  
\[
l \text{ and } n \text{ have the same parity } 
\quad\iff\quad P_*\in\mathbb{Z},\quad\text{ and }
\]
\[
l \text{ and } n \text{ have different parity } 
\quad\iff\quad P_*\notin\mathbb{Z}.
\]
We also have the following:
\begin{itemize}
    \item If $l$ is not even, then $b\notin \mathcal{R}^{\mathrm{even}}$.
        \begin{itemize}
            \item $l$ is odd and $n$ is even, which implies $P_*\notin\mathbb{Z}$, hence $h_b\in U_j$ with $j$ even.
            \item Or $l$ is odd and $n$ is odd, which implies $P_*\in\mathbb{Z}$, hence $h_b\in U_j$ with $j$ odd.
        \end{itemize}

    \item If $l$ is not odd, then $b\notin \mathcal{R}^{\mathrm{odd}}$.
        \begin{itemize}
            \item $l$ is even and $n$ is even, which implies $P_*\in\mathbb{Z}$, hence $h_b\in U_j$ with $j$ odd.
            \item Or $l$ is even and $n$ is odd, which implies $P_*\notin\mathbb{Z}$, hence $h_b\in U_j$ with $j$ even.
        \end{itemize}
\end{itemize}

Therefore,
$$
n+j \text{ \,\,even}: h_b\in U_j \iff b\notin\mathcal{R}^{even},
$$
$$
n+j \text{ \,\,odd}: h_b\in U_j \iff b\notin\mathcal{R}^{odd}.
$$

\medskip
The non-connectedness of $U_j$ along the $\mathbb{C}$–slice $(H)_c$ for $c=1$ follows directly from the fact that 
\[
\mathbb{C}\setminus \mathcal{R}^{\mathrm{odd}}(n,m,c)
\quad\text{and}\quad
\mathbb{C}\setminus \mathcal{R}^{\mathrm{even}}(n,m,c)
\]
are both non-connected sets.

\medskip
It remains to consider the case $j=m$.  
The previous argument still applies, except that by Theorems~\ref{BohlArmonico} and~\ref{RaicesReales}, we additionally have $h_b\in U_m$ whenever there exists $\lambda\in\mathbb{R}^+$ such that $h_b(\lambda)$ lies outside the triangle region.
\end{proof}

\begin{example}
Let $h=z^{5}+6\,\overline{z}^{\,3}+1$, with $j=\{1,2,\dots,10\}\backslash\{3\}.$ We have:\\
For $j = 1,5,7,9$, $(n+j)$ is odd, the condition holds, that is, $h \in U_j$ if and only if $b \notin \mathcal{R}^{\mathrm{odd}}$.\\
For $j = 2,4,6,8,10$, $(n+j)$ is even, and the condition does not hold.\\
For $j=m$, $(n+m)=5$ is odd. Hence $h\in U_3 \quad\text{if and only if}\quad b\notin \mathcal{R}^{\mathrm{odd}}$ (see Figure~\ref{fig:EjemploRayosPares}).  
However,
\begin{equation*}
    h \in U_0 \cap U_1 \cap U_3 \cap U_5 \cap U_7 \cap U_9 \cap U_{11}.
\end{equation*}
Moreover, there exists $\lambda\in\mathbb{R}^+$ such that every point $h_b(\lambda)\in\mathbb{R}$ in the interval $(|z_3|, |z_4|)$ lies outside the triangle region.\\
To verify that the conditions of the Theorem \ref{teo_complementos} are satisfied, the moduli of all the roots of $h=z^{5}+6\,\overline{z}^{\,3}+1$ are listed.
\begin{equation*}
\begin{array}{llllll}
    0.54163, & 0.55589, &0.55589, &2.43641, & 2.43641, & 2.44415,\\ 2.44415, & 2.45478, &
    2.45478, & 2.46209, & 2.46209.
\end{array}
\end{equation*}

\begin{figure}[h!]
\centering

\begin{minipage}{0.35\textwidth}
    \includegraphics[width=\linewidth]{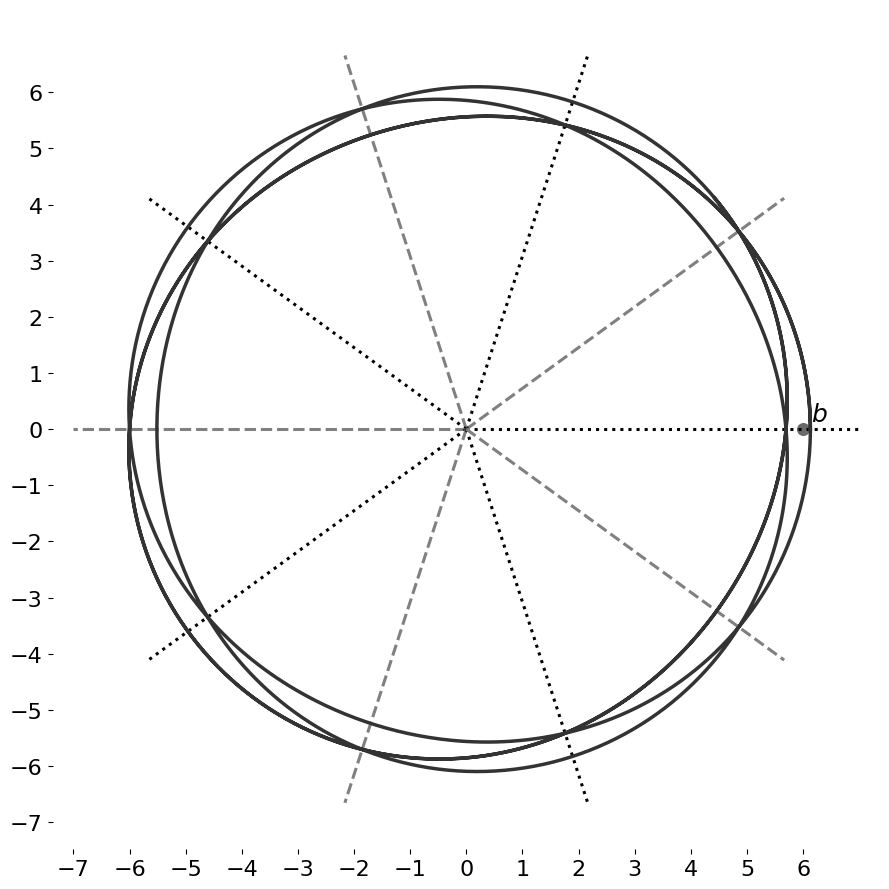}
\end{minipage}
\hfill
\begin{minipage}{0.61\textwidth}
    \caption{\small{Trochoid associated to $h = z^{5} + 6\,\overline{z}^{3} + 1$, taking a complex root with modulus $v \approx 0.55589$. The dotted lines represent the even rays ($\mathcal{R}^{\mathrm{even}}$) and the dashed lines represent the odd rays ($\mathcal{R}^{\mathrm{odd}}$). We observe that $b = 6 \notin \mathcal{R}^{\mathrm{odd}}$.}}
    \label{fig:EjemploRayosPares}
\end{minipage}

\end{figure}

\end{example}
The following theorem is a result of Greenfield and Drucker, who provide an explicit formula for the discriminant of monic univariate trinomials. In our case, we are interested in finding double roots of harmonic trinomials. However, a notion of discriminant has not yet been developed for harmonic trinomials. Therefore, we can only offer an interpretation of Theorem 4 in \cite{GREENFIELD1984105}.
\begin{theorem} (Greenfield and Drucker, 1984)
    Let $h(z)=z^{\,n+m}+b z^{\,m}+c$ be a trinomial with $b,c\in\mathbb{C}$, where $n$ and $m$ are relatively prime integers.  
    Then the discriminant $D(h)$ of $h$ is given by
    \[
        D(h)=(-1)^{\frac{(n+m)(n+m-1)}{2}}\, c^{\,m-1}
        \left(c^{\,n}(n+m)^{\,n+m}-(-1)^{\,n+m} b^{\,n+m} n^{\,n} m^{\,m}\right).
    \]
\end{theorem}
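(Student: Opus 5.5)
The plan is to compute the discriminant as a resultant and evaluate it through the roots. Write $N:=n+m$ and let $z_1,\dots,z_N$ be the roots of the analytic trinomial $h(z)=z^{N}+bz^{m}+c$. Since $h$ is monic, the standard identity is
\[
D(h)=(-1)^{\frac{N(N-1)}{2}}\prod_{i=1}^{N}h'(z_i),
\]
so everything reduces to evaluating $\prod_i h'(z_i)$. First I will treat the generic case $b\neq 0$, $c\neq 0$; the cases $b=0$ or $c=0$ then follow by continuity, since both sides are polynomials in $(b,c)$ (or they can be checked directly).

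The key trick is to multiply by $z$ and use the relation $z^N=-bz^m-c$, valid at every root. This gives
\[
z_ih'(z_i)=Nz_i^N+mbz_i^m=-nb\,z_i^m-Nc .
\]
Hence
\[
\prod_i h'(z_i)=\frac{\prod_i\bigl(-nbz_i^m-Nc\bigr)}{\prod_i z_i},
\qquad
\prod_i z_i=(-1)^Nc .
\]
Set $t:=-\frac{Nc}{nb}$, so that $\prod_i(-nbz_i^m-Nc)=(-nb)^N\prod_i(z_i^m-t)$.

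Next I will evaluate $\prod_i(z_i^m-t)$ by factoring over the $m$-th roots of unity. Fix $s$ with $s^m=t$, so that $z^m-t=\prod_{\omega^m=1}(z-\omega s)$. Then
\[
\prod_i(z_i^m-t)=\prod_{\omega}\prod_i(z_i-\omega s)=(-1)^{Nm}\prod_{\omega}h(\omega s).
\]
Each factor is $h(\omega s)=\omega^Ns^N+bt+c$, and $bt+c=-\frac{mc}{n}=:e$. Because $\gcd(N,m)=\gcd(n,m)=1$, the map $\omega\mapsto\omega^N$ permutes the $m$-th roots of unity. Therefore
\[
\prod_\omega\bigl(e+\omega^N s^N\bigr)=\prod_\omega\bigl(e+\omega u\bigr)=e^m-(-u)^m,
\qquad u=s^N,\ u^m=t^N .
\]
Substituting $e=-\frac{mc}{n}$ and $t=-\frac{Nc}{nb}$ gives a closed expression in $b$ and $c$.

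The remaining step is pure bookkeeping, and this is where I expect the only real difficulty: collecting all the signs $(-1)^{N}$, $(-1)^{Nm}$, $(-1)^m$ together with the powers of $n$, $b$ and $c$. After clearing the denominators $n^m$ and $(nb)^N$, the factor should simplify to $c^{m-1}\bigl(c^n N^N-(-1)^N b^N n^n m^m\bigr)$. I will verify the overall sign by reducing modulo $2$ in each exponent, and I will check it against the quadratic case $n=m=1$, where $D=b^2-4c$, before trusting the general formula.
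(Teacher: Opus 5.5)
Your argument is correct, and it supplies something the paper does not: the paper gives no proof of this statement at all---it quotes Theorem~4 of Greenfield and Drucker and uses the formula only as motivation for its informal discussion of double roots, never inside a proof. Your route is the root-product evaluation of $\mathrm{Res}(h,h')$, and each step checks out: $z_ih'(z_i)=-nbz_i^m-Nc$, $\prod_i z_i=(-1)^Nc$, $\prod_i(z_i^m-t)=(-1)^{Nm}\prod_\omega h(\omega s)$, and $bt+c=-mc/n$. The passage to $bc=0$ by density is also fine, since both sides are polynomials in $(b,c)$ because $m\ge 1$. The bookkeeping you postponed does close:
\[
(-nb)^N\bigl(e^m-(-1)^m t^N\bigr)=(-1)^{m+1}c^m\bigl(N^Nc^n-(-1)^N n^n m^m b^N\bigr),
\]
hence
\[
\prod_{i=1}^{N}h'(z_i)=(-1)^{(N+1)(m+1)}\,c^{m-1}\bigl(c^nN^N-(-1)^N b^N n^n m^m\bigr).
\]
The residual sign $(-1)^{Nm+N+m+1}=(-1)^{(N+1)(m+1)}$ equals $+1$ precisely because $N$ and $m$ are not both even. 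So coprimality is used a second time here, not only for the permutation $\omega\mapsto\omega^N$. Say this explicitly rather than leaning on the check $n=m=1$, which tests only one of the three admissible parity patterns of $(N,m)$. Compared with the bare citation, your computation shows exactly where $\gcd(n,m)=1$ enters, and it extends at once. For $d=\gcd(N,m)>1$ the map $\omega\mapsto\omega^N$ is $d$-to-one onto the $(m/d)$-th roots of unity, and the same steps produce the $d$-th power form of the trinomial discriminant. The citation, for its part, is all the paper needs, since the formula plays no logical role in its results.
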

Below we present the results on multiple roots of a harmonic trinomial that have no analogue in the non-harmonic trinomial setting in \cite{theobald2016norms}.
\begin{lemma}
    Let $H(z)=P(z)+\overline{Q(z)}$ be a harmonic polynomial, where $P$ and $Q$ are complex polynomials.  
    Let $\zeta$ be a root of $H$. Then:
    \begin{enumerate}
        \item if $J_H(\zeta)\neq 0$, then $\zeta$ is a simple root;
        \item if $J_H(\zeta)=0$, then $\zeta$ has multiplicity at least $2$.
    \end{enumerate}
\end{lemma}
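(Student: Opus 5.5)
Multiplicity here has to be read through the local behaviour of $H$ near $\zeta$, i.e.\ through its first-order Taylor data. Write $H$ near $\zeta$ using the expansions already introduced:
\[
P(z)=p_0+\sum_{k\ge1}p_k(z-\zeta)^k,\qquad Q(z)=q_0+\sum_{k\ge1}q_k(z-\zeta)^k,
\]
with $p_0+\overline{q_0}=H(\zeta)=0$, $p_1=P'(\zeta)$ and $q_1=Q'(\zeta)$. Then
\[
H(z)=p_1(z-\zeta)+\overline{q_1}\,\overline{(z-\zeta)}+O(|z-\zeta|^2),
\]
so the real differential of $H$ at $\zeta$ is the $\mathbb{R}$-linear map $L(w)=p_1w+\overline{q_1}\,\overline{w}$. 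A direct computation in coordinates gives $\det L=|p_1|^2-|q_1|^2=J_H(\zeta)$, which is \eqref{Jacobiano}. I will call $\zeta$ \emph{simple} when this linear part is non-degenerate, and of multiplicity at least $2$ otherwise. This matches the paper's convention: when $J_H\neq0$ the Duren order is $1$, and the example $H=z^2-2\overline z+1$ at $z_0=1$ shows that order and multiplicity are distinct notions when $J_H=0$.

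For part (1), suppose $J_H(\zeta)\neq0$. Then $L$ is invertible, and the real inverse function theorem makes $H$ a local diffeomorphism near $\zeta$. Hence $\zeta$ is an isolated zero, $H$ is locally injective there, and the zero persists under small perturbations as exactly one nearby zero. This gives simplicity in both senses. Equivalently, in the sense-preserving case $|p_1|>|q_1|$, the series $\sum p_k(z-\zeta)^k$ has order $1$, so by the definition from \cite{Duren_2004} the order of $\zeta$ is $1$; the sense-reversing case $|q_1|>|p_1|$ is symmetric.

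For part (2), suppose $J_H(\zeta)=0$, i.e.\ $|p_1|=|q_1|$. Then $L$ is singular. Either $p_1=q_1=0$, in which case $H$ vanishes to second order and has multiplicity at least $2$ directly. Or $L$ has real rank one, with kernel spanned by some $w_0\neq0$ satisfying $p_1w_0=-\overline{q_1}\,\overline{w_0}$. In the rank-one case the zero is not transversal: $H$ is not a local diffeomorphism at $\zeta$, so arbitrarily small perturbations $H+\varepsilon$ can be made to have two distinct zeros near $\zeta$ coalescing into $\zeta$, or none at all. This is the same mechanism as the cusp limit $\phi\to\varphi$ in Theorem~\ref{RaizDoble}(2), and it yields multiplicity at least $2$.

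The main obstacle is this last step, where "multiplicity at least $2$" has to be given a precise meaning in the degenerate rank-one case. My first choice is to define multiplicity as the minimal number of preimages of small regular values $\varepsilon$ near $0$ inside a small disk around $\zeta$. I would then use the fold or cusp normal form of a rank-one planar map to exhibit regular values with at least two preimages. This avoids the Duren order, which, as the paper's example shows, can equal $1$ even at a double zero.
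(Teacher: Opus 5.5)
Your computation of the real differential, $L(w)=p_1w+\overline{q_1}\,\overline{w}$ with $\det L=|p_1|^2-|q_1|^2=J_H(\zeta)$, is correct. If ``simple'' just means ``$L$ invertible'', that computation is the entire proof. The paper omits its proof as ``straightforward computations'', and this is presumably what is meant. Part (1) also holds in the perturbative sense, by the inverse function theorem.

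The gap is in the non-tautological version of part (2), which you yourself single out, and the plan you give there fails on three counts.

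First, ``$H$ is not a local diffeomorphism at $\zeta$, so small perturbations have two nearby zeros or none'' is not a valid inference. The map $(x,y)\mapsto(x,y^3)$ is a homeomorphism with singular differential at the origin, and every constant perturbation of it has exactly one nearby zero. Likewise $(x,y)\mapsto(x^3,y^3)$ covers your rank-zero case, so that case is not ``direct'' in a perturbative sense either. What rules this out for $H$ is harmonicity, and your argument never uses it.

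Second, the \emph{minimal} number of nearby preimages of small regular values is $0$ at a fold, because values on the outer side of the fold curve have no preimages near $\zeta$. The paper's own example shows this. For $H=z^2-2\overline{z}+1$ and $z-1=x+iy$ one has $H=x^2-y^2+2iy(x+2)$. Near $z=1$, where $x+2\neq0$, the equation $H=-\delta$ with small $\delta>0$ forces $y=0$ and then $x^2=-\delta$, so it has no solution there. Yet $-\delta$ is a regular value: the critical set is the unit circle, and the only real critical values are $0$ and $4$. Your definition would therefore give multiplicity $0$ to what the paper calls a double zero.

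Third, Whitney's fold and cusp normal forms describe only stable germs. A rank-one zero of a particular harmonic polynomial need not be of either type.

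Either of the following closes the gap.

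\emph{Algebraic route.} Take the multiplicity of $\zeta$ to be the intersection multiplicity at $\zeta$ of the real curves $\Re H=0$ and $\Im H=0$, that is, $\dim\mathcal{O}_\zeta/(\Re H,\Im H)$. The rows of $DH(\zeta)$ are the linear parts of $\Re H$ and $\Im H$.
If $J_H(\zeta)\neq0$, these rows span $\mathfrak{m}/\mathfrak{m}^2$, so $(\Re H,\Im H)=\mathfrak{m}$ by Nakayama and the multiplicity is $1$.
If $J_H(\zeta)=0$, they span at most a line, so $(\Re H,\Im H)+\mathfrak{m}^2\subsetneq\mathfrak{m}$ and the multiplicity is at least $2$.
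This treats rank one and rank zero together. In the example above, $x+2$ is a unit, so the ideal is $(x^2-y^2,\,y)=(x^2,y)$ and the multiplicity is exactly $2$, as the paper says.

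\emph{Perturbative route.} Count the \emph{maximal} number of zeros of $H-w$ near $\zeta$ for small $w$, and use Lewy's theorem: a harmonic map that is injective on a domain has nonvanishing Jacobian there (see \cite{Duren_2004}). If $J_H(\zeta)=0$, then every disk around $\zeta$ contains $z_1\neq z_2$ with $H(z_1)=H(z_2)=w$ and $|w|$ small. Hence $H-w$ has two distinct zeros arbitrarily close to $\zeta$, and no normal forms are needed.
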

The proof follows from straightfoward computations and we omit it.

\begin{corollary}
\label{MultRoot}
    Let $h(z) = az^{n+m} + b\overline{z}\,^m + c$ be a harmonic trinomial, and let $\zeta$ be a root of $h$. If $J_h(\zeta) = 0$, then $\zeta$ is a root of multiplicity at least 2.
\end{corollary}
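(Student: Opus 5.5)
The corollary is a direct specialization of the preceding lemma on harmonic polynomials, so the plan is to write $h$ in the form $H=P+\overline{Q}$ and invoke part (2) of that lemma. Concretely, I will set
\[
P(z)=az^{n+m}+c,\qquad Q(z)=\overline{b}\,z^{m},
\]
so that $\overline{Q(z)}=b\,\overline{z}\,^m$ and $h=P+\overline{Q}$ with $P,Q$ complex polynomials. Then $h$ is a harmonic polynomial in the sense of the definition in the subsection on Jacobians, and by \eqref{Jacobiano} its Jacobian is
\[
J_h(z)=|P'(z)|^2-|Q'(z)|^2=(n+m)^2|a|^2|z|^{2(n+m-1)}-m^2|b|^2|z|^{2(m-1)} .
\]
This also shows that the critical set is the circle $|z|^{n}=\frac{m|b|}{(n+m)|a|}$ (together with the origin when $m>1$), which matches the paper's remark that the critical curve is a circle centered at the origin.

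With this identification, the hypothesis $J_h(\zeta)=0$ at a root $\zeta$ of $h$ is exactly the hypothesis of part (2) of the lemma applied to $H=h$. The conclusion that $\zeta$ has multiplicity at least $2$ follows immediately.

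The only point needing care is to make explicit what ``multiplicity'' means, since the lemma's proof was omitted. I would justify part (2) briefly. Expand $h$ at $\zeta$:
\[
h(\zeta+w)=P'(\zeta)w+\overline{Q'(\zeta)}\,\overline{w}+O(|w|^2).
\]
If $|P'(\zeta)|=|Q'(\zeta)|$, the real-linear part $w\mapsto P'(\zeta)w+\overline{Q'(\zeta)w}$ has zero determinant, so the zero is not a transversal (simple) zero. Under any perturbation of the coefficients, the roots near $\zeta$ therefore do not persist as a single sense-preserving or sense-reversing zero. Equivalently, the local index of $h$ at $\zeta$ is not $\pm1$ in the nondegenerate sense. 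This is the sense of ``multiplicity at least two'' that the paper uses in its example $H=z^2-2\overline{z}+1$.

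I expect this interpretive step to be the main obstacle, not the specialization itself. The corollary as stated only needs the lemma, so I will simply cite it after exhibiting $P$ and $Q$.
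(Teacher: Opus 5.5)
Your main argument matches the paper's. The paper gives no separate proof of this corollary: it follows directly from part (2) of the preceding lemma (whose proof the paper omits), applied to $h=P+\overline{Q}$ with $P(z)=az^{n+m}+c$ and $Q(z)=\overline{b}\,z^{m}$. Your Jacobian formula and your description of the critical circle are correct.

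Do not include your supplementary justification of part (2) as written, because one of its claims is false. The correct part is this: $J_h(\zeta)=|P'(\zeta)|^2-|Q'(\zeta)|^2$ is the determinant of the real differential $w\mapsto P'(\zeta)w+\overline{Q'(\zeta)w}$. So $J_h(\zeta)=0$ says exactly that $\zeta$ is a degenerate (non-transversal) zero, which is all the corollary asserts. The false part is the claim that such a zero can never persist as a single sense-preserving or sense-reversing zero under perturbation. Relatedly, if ``index'' means the winding number, that index can be $\pm1$ at a degenerate zero.

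Take $h(z)=z^2+2\overline{z}-3$ (so $n=m=1$) and $\zeta=1$. Then $J_h(1)=4-4=0$, yet $h(1+w)=4\,\mathrm{Re}\,w+w^2$ has index $+1$ at $w=0$; this is a cusp-type zero.
\begin{itemize}
\item For $c=-3+\eta$ with small $\eta>0$, the zero splits into three zeros near $1$: one at approximately $1-\eta/4$ and two at $1\pm i\sqrt{\eta}$.
\item For $c=-3-\eta$, exactly one zero remains near $1$, at $x=-1+\sqrt{4+\eta}\approx 1+\eta/4$ on the real axis, and it is sense-preserving.
\end{itemize}

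If you want a perturbative meaning of ``multiplicity at least $2$'', state it existentially. By Lewy's theorem, a harmonic map with $J_h(\zeta)=0$ is not injective on any neighbourhood of $\zeta$. Hence there exist arbitrarily small $\delta$ for which the harmonic trinomial $h-\delta$ has at least two distinct zeros near $\zeta$.
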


\begin{corollary}
\label{frontera}
    If a harmonic trinomial $h = z^{n+m} + b \overline{z}\,^m + c$ with $b,c \in \mathbb{C}^*$ has a double root $\zeta_j = \zeta_{j+1}$, then $\theta = \frac{\gamma}{n+m} \mod 2\pi,$ where $\theta = \arg(\zeta)$ and $\gamma = \arg(c)$.
\end{corollary}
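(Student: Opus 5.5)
The plan is to combine the cusp condition of Theorem~\ref{RaizDoble}(2) with the vanishing of the Jacobian from Corollary~\ref{MultRoot}, and then compare arguments. Write the double root as $\zeta=ve^{i\theta}$ with $v>0$, and write $c=|c|e^{i\gamma}$. Since $\zeta$ is a multiple root, we have both $h(\zeta)=0$ and $J_h(\zeta)=0$. By \eqref{Jacobiano} with $P(z)=z^{n+m}+c$ and $Q(z)=\overline{b}z^m$, the condition $J_h(\zeta)=0$ reads
\[
(n+m)^2v^{2(n+m-1)}=m^2|b|^2v^{2(m-1)},
\qquad\text{that is,}\qquad
|b|=\frac{n+m}{m}v^n .
\]
This is exactly the modulus condition appearing in Theorem~\ref{RaizDoble}(2). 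In the language of that theorem, $\theta$ is the parameter $\phi$ at which the trochoid $h^v$ has a cusp.

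Next, I use the two cusp equations \eqref{ec1} and \eqref{ec2} at $\phi=\theta$, keeping the arguments rather than only the moduli. Equation \eqref{ec2} gives
\[
|c|v^{-m}e^{i(\gamma+m\theta)}=-\frac{n+2m}{m}\,v^n e^{i(n+2m)\theta}.
\]
Dividing by $e^{im\theta}$, this becomes $|c|e^{i\gamma}=-\frac{n+2m}{m}v^{n+m}e^{i(n+m)\theta}$. The point is that $c$ and $\zeta^{n+m}$ are then real multiples of each other. Taking arguments yields a linear congruence $(n+m)\theta\equiv\gamma+\epsilon\pmod{2\pi}$, where $\epsilon\in\{0,\pi\}$ records the sign of the real factor. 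Solving gives $\theta\equiv\frac{\gamma+\epsilon}{n+m}\pmod{\frac{2\pi}{n+m}}$, which is the asserted relation once the constant is normalized. As a by-product, substituting back into \eqref{ec1} recovers $b=\frac{n+m}{m}v^ne^{i(n+2m)\theta}$ together with $|c|=\frac{n+2m}{m}v^{n+m}$, which serves as a consistency check.

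I expect the main obstacle to be the bookkeeping of the constant $\epsilon$ and of the branch of the $(n+m)$-th root. The direct computation produces a factor $-1$, which shifts $\gamma$ by $\pi$. So I would first pin down the sign conventions used in \eqref{trochoid} (the paper parametrizes $-b$, not $b$). If a residual shift of $\pi$ survives, I would absorb it by an Egerváry equivalence of type a) with $\delta=\pi/(n+m)$, via Theorem~\ref{teo:Egervaryarm1}, replacing $c$ by $-c$. The statement is then to be read modulo $2\pi/(n+m)$, consistent with the rotational symmetry $z\mapsto e^{2\pi i/(n+m)}z$ of the cusp set.
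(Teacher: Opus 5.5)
\textbf{The final step fails, and it cannot be repaired.} Your computation from \eqref{ec2} is correct, and it is exactly what defeats the proposal. At a cusp it gives $c=-\frac{n+2m}{m}\zeta^{n+m}$, so $(n+m)\theta\equiv\gamma+\pi\pmod{2\pi}$. There is no freedom in $\epsilon$, because the real factor is negative.

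This $\pi$ cannot be normalized away. An Egerv\'ary equivalence of type a) between two monic trinomials forces $k=e^{-i(n+m)\delta}$. It sends $c\mapsto e^{-i(n+m)\delta}c$ and the root $\zeta\mapsto e^{-i\delta}\zeta$, so $(n+m)\theta-\gamma$ is invariant modulo $2\pi$; type b) only changes its sign. With your $\delta=\pi/(n+m)$, $c$ does become $-c$, but $\theta$ drops by $\pi/(n+m)$ and the $\pi$ reappears.

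So your argument, carried out correctly, refutes the statement at cusp-type double roots, even read modulo $2\pi/(n+m)$. The paper's own example after Theorem~\ref{RaizDoble} confirms this. There $h(z)=z^2+\frac{2\sqrt3}{3}\overline{z}-1$ has the double root $\zeta=\frac{\sqrt3}{3}$ (indeed $J_h(\zeta)=\frac43-\frac43=0$), with $\theta=0$ and $\gamma=\pi$, whereas $\frac{\gamma}{n+m}=\frac{\pi}{2}\not\equiv 0\pmod{\pi}$.

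The paper's proof reaches the opposite sign by a different route, namely by asserting without justification that a double root forces $|\zeta^{n+m}+c|=|\zeta|^{n+m}+|c|$. In this example the left side is $\frac23$ and the right side is $\frac43$. At every cusp one has instead $|\zeta^{n+m}+c|=|c|-|\zeta|^{n+m}$, which is the degenerate triangle $C(v)=0$. The discrepancy you found is therefore a defect of the statement, not of your sign conventions.

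\textbf{An earlier step is also unjustified.} $J_h(\zeta)=0$ does not make $\theta$ a cusp parameter of the $v$-trochoid. So the implication ``double root $\Rightarrow$ cusp'' that you take from Theorem~\ref{RaizDoble}(2) is not available. Put $G(z):=-(z^{n+m}+c)\,\overline{z}^{\,-m}$. Then $h(z)=\overline{z}^{\,m}\bigl(b-G(z)\bigr)$, and at a root $J_h(\zeta)=|\zeta|^{2m}\det DG(\zeta)$. The vanishing of $J_h$ only says that $DG(\zeta)$ has a nontrivial kernel, and \eqref{ec2} is exactly the case where that kernel is the tangent direction $i\zeta$.

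If the kernel is radial, one finds instead $c=\frac{n}{m}\zeta^{n+m}$, i.e.\ $(n+m)\theta\equiv\gamma$; this is the degenerate triangle $B(v)=0$. For instance, $z^3-\frac32\overline{z}^{\,2}+\frac12$ has $J_h(1)=9-9=0$, while its $v=1$ trochoid $e^{5i\phi}+\frac12e^{2i\phi}$ has derivative $6i\neq0$ at $\phi=0$. This is the only configuration in which the stated relation holds, and your route through \eqref{ec2} cannot reach it.

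For an oblique kernel, neither relation holds. For example, $z^2+2i\,\overline{z}-1-2i$ has $h(1)=0$ and $J_h(1)=4-4=0$, but $2\theta-\gamma=-\arg(-1-2i)\not\equiv 0,\pi\pmod{2\pi}$.

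Hence no argument can establish the corollary for all double roots. What is true is the following. Tangential kernels (cusps) give $(n+m)\theta\equiv\gamma+\pi$. Radial kernels give $(n+m)\theta\equiv\gamma$. In general, $\gamma-(n+m)\theta$ is unconstrained.
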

\begin{proof}
    Let $\zeta_j$ be a root of $h$, so that $b(\overline{\zeta_j})^m = -(\zeta_j)^{n+m} - c$. Then, $|b||\overline{\zeta_j}|^m = |\zeta_j^{n+m} + c| \leq |\zeta_j|^{n+m} + |c|$. Since $\zeta_j$ is a double root of $h$, the equality $|\zeta_j^{n+m} + c| = |\zeta_j|^{n+m} + |c|$ occurs. Hence $\zeta_j^{n+m}\cdot\overline{c} + \overline{\zeta_j}\,^{n+m}\cdot c = 2|\zeta_j^{n+m}||c|$.
   Now, let $\zeta_j = |\zeta_j| e^{i\theta}$, then
   \begin{equation}
   \label{PropComp}
       \begin{array}{ccc}
             e^{i(n+m)\theta} \cdot \overline{c} + e^{-i(n+m)\theta} \cdot c & = & 2|c| 
       \end{array}
   \end{equation}
   On the other hand, we have
   \begin{equation}
   \label{PropComp2}
       e^{i(n+m)\theta} \cdot \overline{c} + e^{-i(n+m)\theta} \cdot c = 2\Re(e^{-i(n+m)\theta} \cdot c).
   \end{equation}
   From \eqref{PropComp} and \eqref{PropComp2}, we get $\Re(e^{-i(n+m)\theta} \cdot c) = |c|$. Let $e^{-i(n+m)\theta} = \cos((n+m)\theta) - i \sin((n+m)\theta)$ and $c = c_1 + i c_2$, so that $\Re([\cos((n+m)\theta) - i \sin((n+m)\theta)](c_1 + i c_2)) = \sqrt{c_1^2 + c_2^2}$ and $\theta = \frac{\arctan\big(\frac{c_2}{c_1}\big)}{n+m}$.
\end{proof}
Finally, we provide a reinterpretation of Corollary~4.13 in \cite{theobald2016norms} for harmonic trinomials.

\begin{corollary}
\label{radio}
    For a fixed $c\in\mathbb{C}^*$, let $h_b = z^{n+m} + b\overline{z}\,^m + c$ be a parametric family of harmonic trinomials with parameter $b \in \mathbb{C}$. Then:
    \begin{enumerate}
        \item for $n+j$ even: $h_b \in U_m$ if and only if $b \notin \mathcal{R}^{\mathrm{even}} \cap B_\rho(0)$,
        \item for $n+j$ odd: $h_b \in U_m$ if and only if $b \notin \mathcal{R}^{\mathrm{odd}} \cap B_\rho(0)$,
    \end{enumerate}
    where $\rho = |c|^{n/(n+m)} \Big( \big(\frac{m}{n+2m}\big)^{n/(n+m)} + \big(\frac{n}{n+2m}\big) \big(\frac{n+2m}{m}\big)^{m/(n+m)} \Big)$ and $B_\rho(0)$ is the closed disk of radius $\rho$ centered at the origin.
\end{corollary}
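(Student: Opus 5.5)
The proof combines Theorem~\ref{teo_complementos} for the parity condition with a quantitative description of where on the rays the $m$-th and $(m+1)$-th roots can share a modulus. By Theorem~\ref{teo_complementos} and Theorem~\ref{teo:union_rayos}, $h_b\notin U_m$ forces $b$ onto $\mathcal{R}^{\mathrm{even}}$ or $\mathcal{R}^{\mathrm{odd}}$ according to the parity of $n+m$. It therefore suffices to show two things.

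First, on the relevant rays, the roots $z_m,z_{m+1}$ have equal modulus exactly when $|b|\le\rho$.

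Second, for $|b|>\rho$, the exceptional clause of Theorem~\ref{teo_complementos} applies: there is a $v$ with $|b|v^m>v^{n+m}+|c|$, i.e. $B(v)>0$, so $(|z_m|,|z_{m+1}|)$ lies in the non-triangle region and the moduli are separated.

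For the normalization, I will use Lemma~\ref{reescalamiento} and the Egerváry reduction to take $|c|=1$, and then rescale back at the end. This is what produces the factor $|c|^{n/(n+m)}$ in $\rho$.

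The threshold comes from the boundary case of the family, where the two roots with equal modulus $v$ merge into a double root. By Theorem~\ref{RaizDoble}(2), this happens exactly when $b$ is a cusp of the trochoid \eqref{trochoid}, with $|b|=\frac{n+m}{m}v^n$. At such a point the function $B(v)=-v^{n+m}+|b|v^m-|c|$ of \eqref{polinomios} is tangent to the axis, so $\mathfrak{b}_1=\mathfrak{b}_2=\mathfrak{b}$ and Theorem~\ref{RaicesReales} identifies $z_m=z_{m+1}=\mathfrak{b}$. I will therefore compute the extremal value of $|b|$ for which $B$ has a nonpositive maximum, or equivalently the value attained at the cusp, subject to the harmonic constraint coming from the relation $v^{n+2m}+c v^m+bv^{2m}=0$ of Proposition~\ref{raicesdeh}. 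Concretely, I will minimize $|b|=v^{-2m}|v^{n+2m}+cv^m|$ on the ray. This is a Bohl/Egerváry-type one-variable optimization, whose critical point is at $v^{n+m}=\frac{m}{n+2m}|c|$ (up to the constant factors appearing in $R$ and $r$ in Theorem~\ref{teo:param_trocoide}). Substituting the critical $v$ should yield exactly $\rho$. The term $\big(\frac{m}{n+2m}\big)^{n/(n+m)}$ comes from $v^n$, and the term $\frac{n}{n+2m}\big(\frac{n+2m}{m}\big)^{m/(n+m)}$ comes from $|c|v^{-m}$, matching the two summands of the trochoid $(R-r)e^{i\varphi}+de^{i(\cdots)}$.

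For the interior case $|b|\le\rho$ on the correct rays, I will argue by continuity. At $b=0$ all roots have modulus $|c|^{1/(n+m)}$ (Theorem~\ref{RaizDoble}(3)). As $|b|$ increases along a ray, the singular point of the trochoid persists as a double point by Theorem~\ref{RaizDoble}(1) until it becomes the cusp at $|b|=\rho$. Hence $|z_m|=|z_{m+1}|$ throughout, and $h_b\notin U_m$. Closedness of the disk matches the cusp endpoint, where the root is double and the moduli still coincide.

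The main obstacle is the optimization step. I must verify that the critical point found is the relevant one, i.e. that it really produces the cusp which separates "equal moduli" from "separated moduli" for the $m$-th pair rather than for some other index. I also have to reconcile the statement's condition "$n+j$" (with $j=m$) with the parity bookkeeping of $P_*$ in \eqref{P_*}. I would first check the computation against the examples $z^2+\frac{2\sqrt3}{3}\overline z-1$ and $z^5+6\overline z^3+1$ before writing the general substitution.
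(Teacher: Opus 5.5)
There is a genuine gap, and it sits at the step you flag yourself: the claim that the $B$-tangency threshold is ``equivalently the value attained at the cusp,'' and that the radial optimization has its critical point at $v^{n+m}=\frac{m}{n+2m}|c|$. For $c=|c|>0$ and real $v>0$, your expression $|b|=v^{-2m}|v^{n+2m}+cv^m|$ is just $v^{n}+|c|v^{-m}$. Its minimum is at $n v^{n+m}=m|c|$, not at $(n+2m)v^{n+m}=m|c|$. The minimum value is exactly the threshold beyond which $B(v)=-v^{n+m}+|b|v^m-|c|$ takes positive values:
\[
\rho_B=\min_{v>0}\bigl(v^{n}+|c|v^{-m}\bigr)=\frac{n+m}{n}\Bigl(\frac{n}{m}\Bigr)^{m/(n+m)}|c|^{n/(n+m)}.
\]
This is the radius of the analytic case in \cite{theobald2016norms}. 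The quantity $\frac{m|c|}{n+2m}$ comes from a different equation, namely the vanishing of the angular derivative \eqref{ec2}, $(n+2m)v^{n}=m|c|v^{-m}$. That is how the paper gets $\rho$: it solves the cusp system \eqref{ec1}--\eqref{ec2} of Theorem~\ref{RaizDoble} and combines $|\zeta|^{n+m}=\frac{m|c|}{n+2m}$ with $|b|=\frac{n+m}{m}|\zeta|^{n}$. The two numbers are different: $\rho=\bigl(\frac{n}{n+2m}\bigr)^{n/(n+m)}\rho_B<\rho_B$. So at $|b|=\rho$ the function $B$ is strictly negative on all of $(0,\infty)$, and is not tangent to the axis at the cusp radius. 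Consequently your separation step (``for $|b|>\rho$ there is $v$ with $B(v)>0$'') fails for $\rho<|b|\le\rho_B$. The Bohl argument only separates $z_m$ from $z_{m+1}$ when $|b|>\rho_B$.

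This cannot be patched, because the statement with this $\rho$ is false. Take $n=m=1$ and $c=1$, so that $\rho=2/\sqrt3$ and $\mathcal{R}^{\mathrm{even}}$ is the real axis. For real $b$ and $z=x+iy$, the equation $h_b(z)=0$ means $y(2x-b)=0$ and $x^2-y^2+bx+1=0$. For $0<|b|<2$ the branch $y=0$ is empty, so the only roots are $\frac b2\pm i\sqrt{\frac{3b^2}{4}+1}$, both of modulus $\sqrt{b^2+1}$. Thus $h_{3/2}\notin U_1$ even though $\frac32\notin B_\rho(0)$. Along this ray the equal-modulus regime ends exactly at $|b|=2=\rho_B$, where the real double root $z=-b/2$ appears; this agrees with your Bohl mechanism, but not with $\rho$.

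The same example shows why your continuity argument fails. The singular zero $z=-1$ of $z^2+2\overline{z}+1$ (where $J_h(-1)=0$) lies at a smooth point of the $v=1$ trochoid $e^{3i\phi}+e^{i\phi}$, whose derivative at $\phi=\pi$ is $-4i\neq0$. Meanwhile the cusps for $c=1$ lie at $\pm\frac{2}{\sqrt3}i$, on the odd rays rather than the even rays carrying $U_1^c$. So the identification ``double root $=$ cusp'' from Theorem~\ref{RaizDoble}(2), on which both your proposal and the paper's computation of $\rho$ rest, is the step that breaks. The paper's own first step, that $(U_m)^c$ is where $h_b$ stays in the triangle region, would likewise point to $\rho_B$. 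The radius your Bohl-type argument actually supports is $\rho_B$, not $\rho$.
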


\begin{remark}
    The following proof uses ideas similar to those in Corollary 4.13 of \cite{theobald2016norms}, but includes specific details when dealing with the triangle region associated with a harmonic trinomial.
\end{remark}

\begin{proof} 
In Theorem \ref{teo_complementos}, for $U_m$ (recall that $j \in \{1,\dots,k-1\} \setminus \{m\}$), the conditions in \eqref{conditions_Uj_par_impar} are still satisfied with the modification that additionally $h_b \in U_m$ if there exists $v \in \mathbb{R}^+$ such that $h_b$ does not belong to the region forming triangles. Hence, $(U_m)^c$ restricted to the $\mathbb{C}$-slice given by fixed $c \in \mathbb{C}^*$ is the subset of $\mathcal{R}^{\text{odd}}$ (resp. $\mathcal{R}^{\text{even}}$) where $h_b$ belongs to the region forming triangles.

Moreover, we know that the line segments $v^{n+m}, |b|v^m,$ and $|c|$ associated with $h_b$ are independent of the arguments of the coefficients. This property remains valid in an open subset of $U_m$, which lies outside the triangle-forming region; in particular, it satisfies $|b|v^m > v^{n+m} + |c|$. This inequality persists under an increase in $|b|$. Indeed, for $|b_0| > |b|$, we have $|b_0|v^m > |b|v^m > v^{n+m} + |c|$. That is, under an increase in $|b|$, the $h_b$ such that $h_b \in U_m$, where $U_m$ is open, remain outside the region forming triangles.

Therefore, $(U_m)^c$ along the $\mathbb{C}$-slice $(H)_c$ for fixed $c$ is given by $\mathcal{R}^{\text{odd}}$ (resp. $\mathcal{R}^{\text{even}}$) intersected with a closed disk. 

By the reasoning in Corollary \ref{frontera}, the boundary of this disk is determined by choosing $|b| > 0$ such that $h = z^{n+m} + |b|\overline{z}\,^m + |c|$ has a double root $\zeta_m = \zeta_{m+1}$. Since $h_{|b|_{\epsilon>0}}$ lies outside the triangle region at $\zeta_m$ for each $\epsilon>0$, from $h^{\zeta_j}(\phi) = b + \zeta_j^n e^{i(n+2m)\phi} + |c| \zeta_j^{-m} e^{i(\gamma + m\phi)} = 0$ for some $\phi \in [0,2\pi)$ in the proof of Theorem \ref{RaizDoble}, differentiating gives $\frac{\partial}{\partial \phi} h^{\zeta_j}(\phi) = \zeta_j^n i(n+2m)e^{i(n+2m)\phi} + |c| \zeta_j^{-m} (im) e^{i(\gamma + m\phi)} = 0$, so
$\zeta_j^{n+m} = \frac{-|c| m e^{i(\gamma + m\phi)}}{(n+2m) e^{i(n+2m)\phi}}$,
which gives
\begin{equation}
\label{root_aj}
|\zeta_j| = \sqrt[n+m]{\frac{|c| m}{n+2m}}.
\end{equation}
Also, from the proof of Theorem \ref{RaizDoble}, we have $b = \frac{n+m}{m} \zeta_j^n e^{i(n+2m)\phi}$, hence $|\zeta_j|^n = \frac{|b| m}{n+m}$.
Thus, $|\zeta_j|^n = \frac{|b| m}{n+m}$ and $|\zeta_j|^{n+m} = \frac{|c| m}{n+2m}$. Dividing these gives
\begin{equation*}
|\zeta_j|^m = \frac{|c| (n+m)}{|b| (n+2m)},
\end{equation*}
and therefore
\begin{equation}
\label{bajm}
|b \zeta_j^m| = \frac{|c| (n+m)}{n+2m} = \frac{|c| n}{n+2m} + \frac{|c| m}{n+2m}.
\end{equation}
On the other hand, from \eqref{root_aj} we can also write
\begin{equation}
\label{bajm2}
|b \zeta_j^m| = |b| |\zeta_j|^m = |b| \Big(\frac{|c| m}{n+2m}\Big)^{m/(n+m)}.
\end{equation}
Comparing \eqref{bajm} and \eqref{bajm2}, we obtain
\[
|b| \Big(\frac{|c| m}{n+2m}\Big)^{m/(n+m)} = \frac{|c| n}{n+2m} + \frac{|c| m}{n+2m},\quad\text{ so }
\]
\begin{equation*}
|b| = \frac{\frac{|c| n}{n+2m} + \frac{|c| m}{n+2m}}{\Big(\frac{|c| m}{n+2m}\Big)^{m/(n+m)}} 
= |c|^{n/(n+m)} \Bigg( \Big(\frac{m}{n+2m}\Big)^{n/(n+m)} + \Big(\frac{n}{n+2m}\Big) \Big(\frac{n+2m}{m}\Big)^{m/(n+m)} \Bigg).
\end{equation*}
\end{proof}

\begin{example}
    Let $h_1(z)=z^8+b\overline{z}\,^3+\frac{1}{2}$, $h_2(z)=z^7+b\overline{z}\,^2+2$, and $h_3(z)=z^5+b\overline{z}+1$ as in Example \ref{example_hip}. Then $h_1, h_2,$ and $h_3$ each have two roots with the same modulus if and only if $b$ lies in the intersection of $\mathcal{R}^{\mathrm{odd}}$ (respectively $\mathcal{R}^{\mathrm{even}}$) with a closed (see Figures \ref{fig:CurvaRayos1}, \ref{fig:CurvaRayos2}, \ref{fig:CurvaRayos3}).

    \begin{figure}[h!] 
    \centering 
    \begin{subfigure}[b]{0.32\textwidth} 
        \centering
        \includegraphics[width=\textwidth]{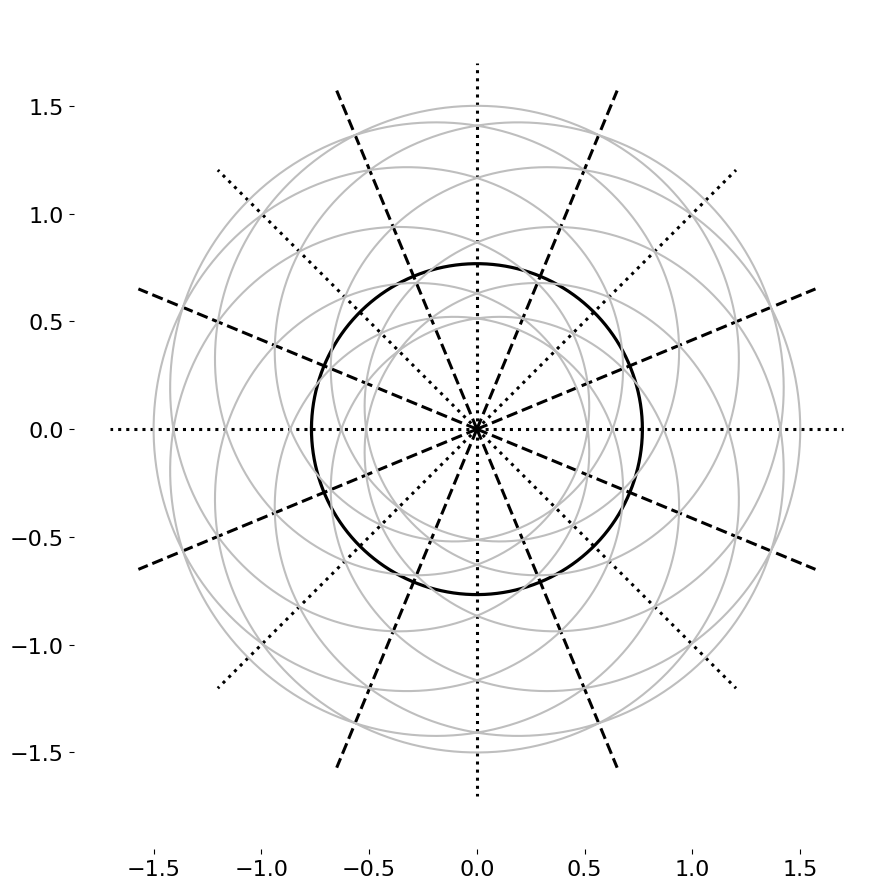} 
        \caption{\small{Possible intersection points of the rays corresponding to $h_1 = z^8 + b\overline{z}\,^3 + \frac{1}{2}$ and a closed disk of radius $\rho_1\approx 0.7676$.}}
        \label{fig:CurvaRayos1}
    \end{subfigure}
    \hfill 
    \begin{subfigure}[b]{0.32\textwidth}
        \centering
        \includegraphics[width=\textwidth]{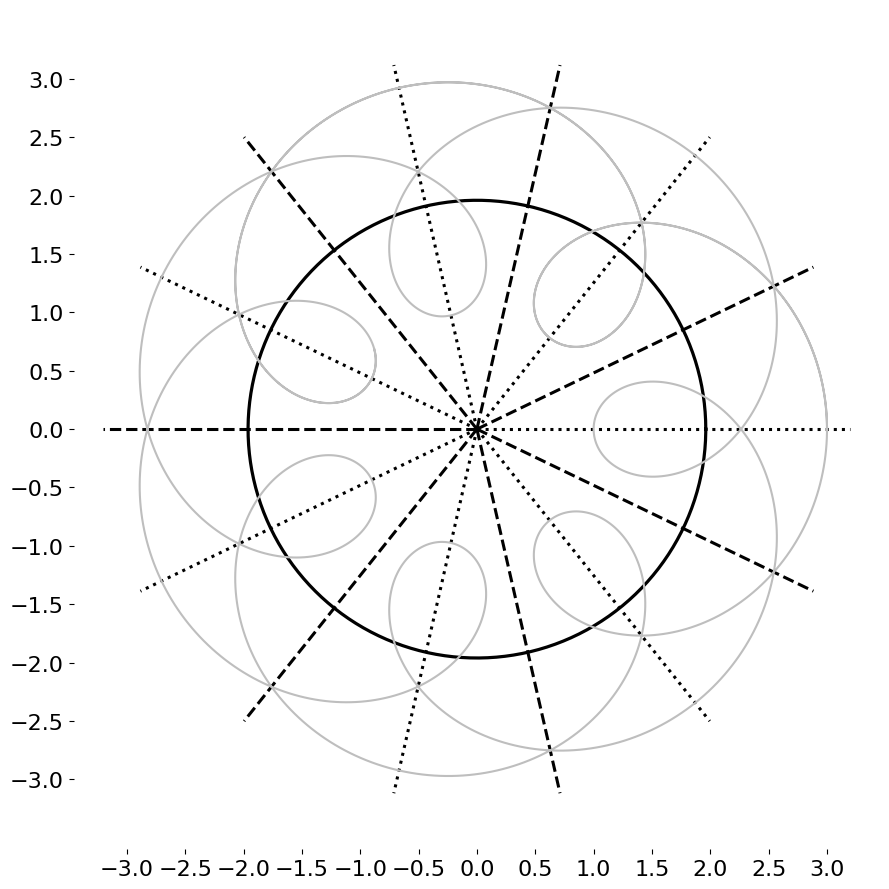}
        \caption{\small{Possible intersection points of the rays corresponding to $h_1 = z^7 + b\overline{z}\,^2 + 2$ and a closed disk of radius $\rho_2\approx1.9616$.}}
        \label{fig:CurvaRayos2}
    \end{subfigure}
    \hfill
    \begin{subfigure}[b]{0.32\textwidth}
        \centering
    \includegraphics[width=\textwidth]{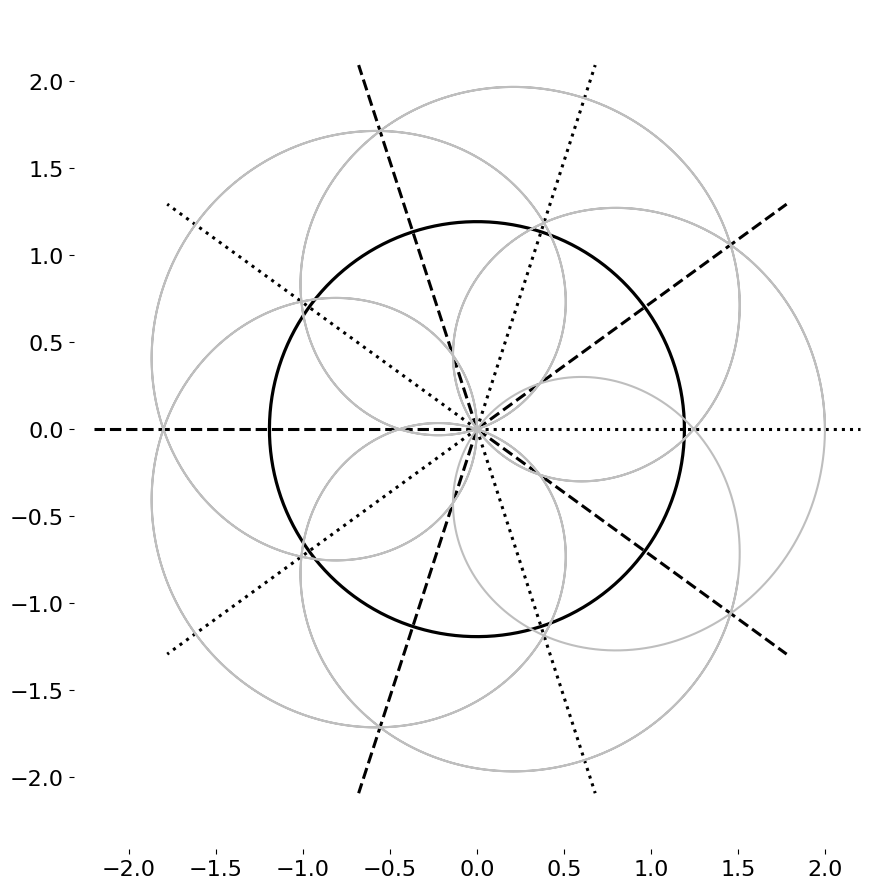}
        \caption{\small{Possible intersection points of the rays corresponding to $h_1 = z^5 + b\overline{z} + 1$ and a closed disk of radius $\rho_3\approx1.2052$.}}
        \label{fig:CurvaRayos3}
    \end{subfigure}
    \caption{\small{On the gray curves are traced the trajectories of the fiber functions 
$h_1^1 - b$, $h_2^1 - b$, and $h_3^1 - b$, each having a root of modulus 
$v = 1$, together with circles of radii $\rho_1$, $\rho_2$, and $\rho_3$, 
respectively. The dotted lines correspond to the even rays 
($\mathcal{R}^{\mathrm{even}}$), while the dashed lines correspond to the odd 
rays ($\mathcal{R}^{\mathrm{odd}}$).
}} 
    \label{fig:multiple}
\end{figure}
\end{example}

\begin{remark}
 The radius of the closed disks associated with the trinomials in \cite{theobald2016norms} is larger than the radius presented in Corollary \ref{radio} for harmonic trinomials. Since the Jacobian may vanish only within these disks, we interpret this as indicating that the region in which singular zeros can occur is smaller for harmonic trinomials than for non-harmonic ones. 
\end{remark}

\section*{Acknowledgements}
The authors acknowledge the support received by Universidad Autónoma de Yucatán.

\section*{Funding}

W. Barrera was partially supported by SECIHTI under grant SNI 45382. J.P. Navarrete was partially supported by SECIHTI under grant SNI 44867. L. Campa acknowledge the support of SECIHTI under Becas Nacionales program (540598).

\section*{Declarations}

\subsection{Conflict of interest} The author declare they have no conflict of interests.

\newpage

\bibliographystyle{abbrv}
\bibliography{referenciasarticulo}

\end{document}